\newcolumntype{P}[1]{>{\centering\arraybackslash}p{#1}}
\global\long\def\RR{{\mathbb R}}
\global\long\def\CC{{\mathbb C}}
\global\long\def\FF{{\mathbb F}}
\renewcommand{\subset}{\subseteq}
\DeclareMathOperator{\tr}{tr}
\DeclareMathOperator*{\argmin}{arg\,min}
\definecolor{green}{rgb}{0,0.6,0.0}
\newtheorem{thm}{Theorem}[section]
\newtheorem{lem}[thm]{Lemma}
\newtheorem{lemma}[thm]{Lemma}
\newtheorem{prop}[thm]{Proposition}
\newtheorem{proposition}[thm]{Proposition}
\newtheorem{corollary}[thm]{Corollary}
\newtheorem{definition}[thm]{Definition}
\newtheorem{rem}[thm]{Remark}
\newcommand{\beq}{\begin{equation}}
\newcommand{\eeq}{\end{equation}}
\newcommand{\beqa}{\begin{eqnarray}}
\newcommand{\eeqa}{\end{eqnarray}}
\newcommand{\beqas}{\begin{eqnarray*}}
\newcommand{\eeqas}{\end{eqnarray*}}
\newcommand{\ei}{\end{itemize}}
\newcommand{\vgap}{\vspace{.1in}}
\newcommand{\R}{\mathbb{R}}
\newcommand{\lam}{{\lambda}}
\newcommand{\inner}[2]{\langle #1,#2\rangle}
\newcommand{\dom}{\mathrm{dom}\,}
\newcommand{\tx}{\tilde x}
\newcommand{\ourmethod}{HALLaR\xspace}
\normalfont\fontsize{14}{15}\bfseries}{\thesection}{1em}{}
\normalfont\fontsize{12}{15}\bfseries}{\thesubsection}{1em}{}
\newtheorem{assumption}{Assumption}[section]
\providecommand{\customgenericname}{}
\newcommand{\newcustomtheorem}[2]{%
  \newenvironment{#1}[1]
  {%
   \renewcommand\customgenericname{#2}%
   \renewcommand\theinnercustomgeneric{##1}%
   \innercustomgeneric
  }
  {\endinnercustomgeneric}
}
\title{A Low-Rank Augmented Lagrangian Method \\ for Large-Scale Semidefinite Programming \\
Based on a Hybrid Convex-Nonconvex
Approach}
\author{Renato D.C. Monteiro \thanks{Stewart School of Industrial and Systems Engineering, Georgia Institute of Technology, Atlanta, GA, 30332-0205. (Email: {\tt monteiro@isye.gatech.edu} \& {\tt asujanani6@gatech.edu}). These authors were partially supported by AFORS Grant FA9550-22-1-0088.} \and 
Arnesh Sujanani \footnotemark[1] \and Diego Cifuentes \thanks{Stewart School of Industrial and Systems Engineering, Georgia Institute of Technology, Atlanta, GA, 30332-0205. (Email: {\tt diego.cifuentes@isye.gatech.edu}). This author was supported partially supported by the Office of Naval Research, N00014-23-1-2631.} }
\date{January 22, 2024 (second version: March 15, 2024)}
\begin{document}
\maketitle

\begin{abstract}
   This paper introduces \ourmethod,
   a new first-order method for solving large-scale semidefinite programs (SDPs) with bounded domain.
   \ourmethod is an inexact augmented Lagrangian (AL) method where the AL subproblems
   are solved by a novel hybrid low-rank (HLR) method.
   The recipe behind HLR is based on two key ingredients:
   1) an adaptive inexact proximal point method with inner acceleration;
   2) Frank-Wolfe steps to escape from spurious local stationary points.
   In contrast to the low-rank method of Burer and Monteiro,
   \ourmethod finds a near-optimal solution (with provable complexity bounds)
   of SDP instances satisfying strong duality.
   Computational results comparing \ourmethod to state-of-the-art solvers on several large SDP instances
   arising from 
   maximum stable set, phase retrieval, and matrix completion,
   show that the former finds highly accurate solutions in substantially less CPU time than the latter ones.
   For example, in less than 20 minutes, \ourmethod can solve a maximum stable set SDP instance with dimension pair $(n,m)\approx (10^6,10^7)$ within 
   $10^{-5}$ relative precision.

   \vgap

\noindent
   {\bf Keywords:} semidefinite programming, augmented Lagrangian, low-rank methods, proximal point method, Frank-Wolfe method, iteration complexity, adaptive method, global convergence rate

\end{abstract}












\section{Introduction}\label{Introduction}

\emph{Semidefinite programming} (SDP) has many applications in engineering, machine learning, sciences, finance, among other areas.
However, solving large-scale SDPs is very computationally challenging.
In particular, interior point methods usually get stalled in large-scale instances due to lack of memory.
This has motivated a recent surge of first-order methods for solving SDPs
that scale to larger instances
\cite{zhao2010newton, yang2015sdpnal+, odonoghue2016conic, garstka2021cosmo, deng2022new, zheng2017fast, shinde2021memory, yurtsever2019conditional, yurtsever2021scalable, renegar2019accelerated}.

This paper introduces \ourmethod,
a new first-order method for solving SDPs with bounded trace.
Let $\mathbb S^n$ be the space of symmetric $n\times n$ matrices
with Frobenius inner product $\bullet$
and with positive semidefinite partial order $\succeq$.
\ourmethod solves the primal/dual pair of SDPs:
\begin{gather*}
\label{eq:sdp-primal}\tag{P}
\min_{X} \quad \{C \bullet X
\quad : \quad
\mathcal A X=b ,\quad
X  \in \Delta^n \}
\end{gather*}
\begin{gather*}
\label{eq:sdp-dual}\tag{D}
\max_{p \in \RR^m, \theta \in \RR}   \quad \{ -b^{T}p-\theta
\quad : \quad
S := C+\mathcal A^{*}p+\theta I \succeq 0, \quad
\theta \geq 0 \}
\end{gather*}
where $b \in \RR^m$, $C\in \mathbb S^n$, $\mathcal A: \mathbb S^n \to \RR^m$ is a linear map,
$\mathcal A^*: \RR^m \to \mathbb S^n$ is its adjoint,
and $\Delta^n$ is the spectraplex
\begin{equation}\label{Delta Definition}
\Delta^n := \{ X \in \mathbb S^n : \tr X \leq 1, X \succeq 0 \}.
\end{equation}
\ourmethod is based on Burer and Monteiro's \emph{low-rank} (LR) approach \cite{burer2003nonlinear, burer2005local} which is described in the next paragraph.

\paragraph{Low-rank approach.}
The LR approach is motivated by SDPs often having optimal solutions with small ranks.
More specifically, it is known
(see \cite{pataki1998rank, barvinok1995problems, shapiro1982rank})
that $r_* \leq \sqrt{2m}$,
where $r_*$ is the smallest among the ranks of all optimal solutions of \eqref{eq:sdp-primal}.
The LR approach consists of solving subproblems obtained by restricting \eqref{eq:sdp-primal} to matrices of rank at most~$r$, for some integer $r$, or equivalently,
the nonconvex smooth reformulation
\begin{align}\label{eq:sdp-lr}
   \tag{$P_r$}
   \min_U\quad
    \left \{ 
    C \bullet U U^T \quad:\quad \mathcal{A}(U U^T) = b, \quad
    \|U\|_F \leq 1, \quad
    U \in \RR^{n\times r} \right\}.
\end{align}

Problems \eqref{eq:sdp-primal} and \eqref{eq:sdp-lr} are equivalent when $r \geq r_*$,
in the sense that if $U_*$ is optimal for \eqref{eq:sdp-lr} then $X_* = U_* U_*^T$ is optimal for \eqref{eq:sdp-primal}.
The advantage of \eqref{eq:sdp-lr} 
compared to \eqref{eq:sdp-primal} is that its matrix variable $U$
has significantly less entries than that of \eqref{eq:sdp-primal} when $r \ll n$, namely,
$n r$ instead of $n(n+1)/2$.
However, as \eqref{eq:sdp-lr} is nonconvex,
it may have stationary points which are not globally optimal.
For a generic instance,
the following results are known:
i)
if $r \geq \sqrt{2m}$ then all local minima of \eqref{eq:sdp-lr} are globally optimal
(see  \cite{cifuentes2021burer, cifuentes2022polynomial, boumal2016non, boumal2020deterministic, bhojanapalli2018smoothed, pumir2018smoothed});
and ii)
if $r < \sqrt{2m}$
then \eqref{eq:sdp-lr} may have local minima which are not globally optimal (see~\cite{waldspurger2020rank}).

\paragraph{Outline of \ourmethod.} 
\ourmethod is an inexact augmented Lagrangian (AL) method
that generates sequences $\{X_t\}$ and $\{p_t\}$ according to the recursions
\begin{subequations}
\begin{align}
    \label{eq:AL}
    X_{t} \quad &\approx \quad
    \argmin_X\quad
    \{\mathcal{L}_{\beta}(X;p_{t-1}) \;:\; X \in \Delta^n\},
    \\
    p_{t} \quad &= \quad
    p_{t-1} + \beta(\mathcal{A}X_{t} - b)\label{lm update}
\end{align}
\end{subequations}
where
\begin{align}\label{AL function}
\mathcal{L}_\beta(X;p)
    \quad:=\quad
    C \bullet X + p^T (\mathcal{A}X - b) + \frac{\beta}{2} \| \mathcal{A}X - b\|^2.
\end{align}
The key part of \ourmethod is an efficient method, called Hybrid Low-Rank (HLR), for finding an approximate global
solution $X_{t}$ of the AL subproblem~\eqref{eq:AL}. 
The HLR method
solves subproblems of the form
\begin{align}\label{eq:Lr}
  \tag{$\mathrm{L}_r$}
  \min_Y\quad
  \left \{ 
    \mathcal{L}_{\beta}(YY^T;p_{t-1}) \quad:\quad
    \|Y\|_F \le 1, \quad
  Y \in \RR^{n\times r} \right\}
\end{align}
for some integer $r \ge 1$.
Subproblem \eqref{eq:Lr} is 
equivalent to the subproblem obtained by restricting $X$ in \eqref{eq:AL} to matrices with rank at most~$r$.
Since \eqref{eq:Lr} is nonconvex, it may have a \emph{spurious} (near) stationary point,
i.e., a (near) stationary point $Y$ such that $YY^T$ is not (nearly) optimal for \eqref{eq:AL}.

More specifically, HLR finds an approximate global solution $X_t$ of \eqref{eq:AL}
by solving a sequence of nonconvex subproblems $(\mathrm{L}_{r_k})_{k\ge 1}$ such that $r_{k+1} \le r_k +1$,
according to following steps:
i) find a near stationary point $Y=Y_k \in \mathbb R^{n \times r_k}$ of $(\mathrm{L}_{r_k})$ using an 
adaptive accelerated inexact proximal point (ADAP-AIPP) method that is based on a combination of ideas developed in \cite{SujananiMonteiro, Aaronetal2017, CatalystNC, WJRComputQPAIPP, WJRproxmet1};
ii)~check if $Y_kY_k^T$ is nearly optimal for \eqref{eq:AL} through a minimum eigenvalue computation and
terminate
the method if so;
else
iii) 
use the following escaping strategy to move away from
the current spurious near stationary point $Y_k$:
perform a Frank-Wolfe (FW) step from
$Y_k$ to obtain
a point $\tilde Y_k$ with either one column
in which case (the unlikely one) $r_{k+1}$ is set to one, or with $r_k + 1$ columns in which case $r_{k+1}$ is set to $r_k+1$,
and use 
$\tilde Y_k$ as the initial iterate for solving $\mathrm{L}_{r_{k+1}}$.
The initial pair $(r_1, \tilde Y_0)$ for HLR is chosen by using a warm start strategy, namely,
as the pair obtained at the
end of the HLR call for solving the
previous subproblem \eqref{eq:AL}.
 It is worth noting that \ourmethod
only stores the
current iterate $Y$  and never
computes the (implicit) iterate $YY^T$ (lying in the $X$-space).





Under the strong duality assumption,
it is shown that \ourmethod obtains an approximate
primal-dual solution 
of \eqref{eq:sdp-primal} and \eqref{eq:sdp-dual} with provable computational complexity bounds expressed
in terms of 
parameters associated with the SDP instance and user-specified tolerances.

\paragraph{Computational impact.}

Our computational results show that \ourmethod performs very well on many large-scale SDPs such as phase retrieval, maximum-stable-set, and matrix completion.
In all these applications, \ourmethod efficiently obtains accurate solutions for large-scale instances,
largely outperforming other state-of-the-art solvers.
For example,
\ourmethod takes approximately $1.75$ hours (resp. $13$ hours) on a personal laptop to solve
within $10^{-5}$ relative precision 
maximum stable set SDP instance for a Hamming graph
with $n \approx 4,000,000$ and $m \approx 40,000,000$ (resp. $n \approx 16,000,000$ and $m \approx 200,000,000$).
Moreover,
\ourmethod
takes approximately $7.5$ hours on a personal laptop to solve
within $10^{-5}$ relative precision
a  phase retrieval SDP instance
with $n = 1,000,000$ and $m = 12,000,000$.
An important reason for the good computational performance of \ourmethod
is that the rank of the iterates $X_t$ remain relatively small throughout the whole algorithm.


\paragraph{Related works.}

This part describes other methods for solving large-scale SDPs.  SDPNAL+ \cite{zhao2010newton, yang2015sdpnal+} is an AL based method that solves each AL subproblem using a semismooth Newton-CG method. Algorithms based on spectral bundle methods (more generally bundle methods) have also been proposed and studied for solving large-scale SDPs (e.g., \cite{Helmberg, Helmberg2, Helmberg3, DingGrimmer, Oustry}).
The more recent works
(e.g., \cite{odonoghue2016conic, garstka2021cosmo, deng2022new, madani2015admm, zheng2017fast})
propose methods for solving large-scale SDPs based on the alternating direction method of multipliers (ADMM).
The remaining of this section discusses
in more detail works that rely on the nonconvex LR approach and the FW method,
since those works are more closely related to this paper.
The reader is referred to the survey paper \cite{majumdar2020recent}
for additional methods for solving large scale SDPs.

The nonconvex LR approach of \cite{burer2003nonlinear, burer2005local}  has been successful in solving many relevant classes of SDPs.
The SDPLR method developed in  these works solves \eqref{eq:sdp-lr} with an AL method
whose AL subproblems are solved by a limited memory BFGS method.
Although SDPLR only handles equality constraints,
it is possible to modify it to handle inequalities
(e.g., \cite{kulis2007fast}).
\ourmethod is also based on the AL method but it applies it directly to \eqref{eq:sdp-primal} instead of \eqref{eq:sdp-lr}.
Moreover, in contrast to SDPLR, \ourmethod solves the AL subproblems
using the HLR method outlined above.
%


This paragraph describes works that solve (possibly a sequence of) \eqref{eq:sdp-lr} without using~ the AL method.
Approaches that use interior point methods for solving  \eqref{eq:sdp-lr} have been pursued for example in 
\cite{rosen2021scalable}.
In the context of MaxCut SDPs, several specialized methods have been proposed which solve \eqref{eq:sdp-lr}
using optimization techniques
which preserves feasibility
(e.g., \cite{burer2001projected, homer1997design,
erdogdu2022convergence,mei2017solving, journee2010low}).
Finally, 
Riemannian optimization methods have been used to solve special classes of SDPs where the feasible sets for \eqref{eq:sdp-lr} are smooth manifolds
(e.g., \cite{rosen2019se, journee2010low, huang2017solving, mei2017solving}).
The FW method minimizes a convex function $g(X)$ over a compact convex domain (e.g., the spectraplex $\Delta^n$).
It is appealing when a sparse solution is desired,
where the notion of sparsity is broad (e.g., small cardinality and/or rank).
The FW method has been used
(e.g., \cite{hazan2008sparse, shinde2021memory, jaggi2010simple})
for solving SDP feasibility problems by minimizing $g(X) = \phi(\mathcal A X - b)$
where $\phi$ is either the squared norm function $\| \cdot \|^2$
or the function $\mathrm{LSE}(y) = \log(\sum_i \exp y_i)$.
Several papers
(e.g., \cite{freund2017extended, nemiro2014, rao2013conditional})
introduce variants of the FW method for general convex optimization problems.

Another interesting method for solving \eqref{eq:sdp-primal} is CGAL of \cite{yurtsever2019conditional, yurtsever2015scalable} which
generates 
its iterates by
performing only FW steps with respect to AL subproblems in the same format as \eqref{eq:AL}.
As \ourmethod, the method of \cite{yurtsever2015scalable} only generates iterates in the $Y$-space.
Its Lagrange multiplier update policy though differs from \eqref{lm update} in that
it updates the Lagrange multiplier
in a more conservative way,
i.e., with $\beta$ in \eqref{lm update} replaced by a usually much smaller $\gamma_t>0$, and does
so only when the size of the new tentative multiplier is not too large.
Moreover, 
instead of using a pure FW method to
solve \eqref{eq:AL},
an iteration of the subroutine HLR invoked by
\ourmethod to solve 
\eqref{eq:AL} consists of an
ADAP-AIPP call applied to \eqref{eq:Lr}
and, if HLR does not terminate,
also a FW step (which generally increases the rank of the iterate by one).
As demonstrated by our computational results,
the use of ADAP-AIPP calls significantly reduces the number of FW steps
performed by \ourmethod,
and, as a by-product,
keeps the ranks of its iterates considerably smaller than those of the CGAL iterates.


The CGAL method was enhanced in \cite{yurtsever2021scalable} to derive a low-storage variant, namely, Sketchy-CGAL.
Instead of explicitly storing its most recent $Y$-iterate
as CGAL does,
this variant computes a certain approximation of the above iterates lying in $\RR^{n \times r}$ where  $r \in \{1,\ldots,n-1\}$ is a specified threshold value whose purpose is to limit
the rank of the stored approximation.
It is shown in \cite{yurtsever2021scalable} that Sketchy-CGAL has $O(m + n r)$ memory storage,
and that it outputs an $O(r^*/(r - r^*-1))$-approximate solution of \eqref{eq:sdp-primal} (constructed using the sketch)
under the assumption that
$r > r^* + 1$,
where $r^*$ is the largest among the ranks of all optimal solutions of \eqref{eq:sdp-primal}.
In contrast to either CGAL or \ourmethod,
a disadvantage of Sketchy-CGAL is that the accuracy of its output primal approximate solution is often low
and degrades further as $r$ decreases,
and can even be undetermined if
$r \le r^* + 1$.
Finally, alternative methods for solving SDPs with $O(m + n r^*)$ memory storage are presented in \cite{ding2021optimal,shinde2021memory,wang2022accelerated}.

\paragraph{Structure of the paper.}

This paper is organized into four sections.
Section~2 discusses the HLR method for solving the AL subproblem~\eqref{eq:AL}
and, more generally, smooth convex optimization problems over the spectraplex $\Delta^n$.
It also presents complexity bounds for HLR, given in Theorem~\ref{thm:complexityLRHFW}.
Section~3 presents \ourmethod for solving the pair of SDPs \eqref{eq:sdp-primal} and \eqref{eq:sdp-dual}
and presents
the main complexity result  of this paper, namely Theorem~\ref{thm:OuterComplexitySDP},
which provides complexity bounds for \ourmethod.
Finally, Section~4 presents computational experiments comparing \ourmethod with various solvers in a large collection of SDPs arising from 
stable set, phase retrieval, and matrix completion problems.

\subsection{Basic Definitions and Notations}

Let $\RR^n$ be the space of $n$ dimensional vectors,
$\RR^{n\times r}$ the space of $n \times r$ matrices,
and $\mathbb S^{n}$ the space of $n \times n$ symmetric matrices.
Let $\RR_{++}^n$ ($\RR^n_{+}$) be the convex cone in $\RR^n$ of vectors with positive (nonnegative) entries,
and let $\mathbb S_{++}^n$ ($\mathbb S^n_{+}$) be the convex cone in $\mathbb S^n$ of positive (semi)definite matrices.
Let $\inner{\cdot}{\cdot}$ and $\|\cdot\|$ be the Euclidean inner product and norm on~$\RR^n$,
and let $\bullet$ and $\|\cdot\|_{F}$ be the Frobenius inner product and norm on~$\mathbb S^n$.
The minimum eigenvalue of a matrix $Q \in \mathbb S^n$ is denoted by $\lambda_{\min}(Q)$, and $v_{\min}(Q)$ denotes a corresponding eigenvector of unit norm.
For any $t>0$ and $a\geq 0$, let $\log_a^+(t):=\max\{\log t, a\}$.

For a given closed convex set $C \subset \RR^n$, its boundary is denoted by $\partial C$ and the distance of a point $z \in \RR^n$ to $C$ is denoted by ${\rm dist}(z,C)$. The diameter of $C$, denoted $D_{C}$, is defined as
\begin{equation}\label{Diameter}
D_{C}:=\sup \{ \|Z-Z'\| : Z, Z' \in C\}.
\end{equation}
The indicator function of $C$, denoted by $\delta_C$, is defined by $\delta_C(z)=0$ if $z\in C$, and $\delta_C(z)=+\infty$ otherwise. The domain of a function $h :\RR^n\to (-\infty,\infty]$ is the set $\dom h := \{x\in \RR^n : h(x) < +\infty\}$.
Moreover, $h$ is said to be proper if
$\dom h \ne \emptyset$. The $\epsilon$-subdifferential of a proper convex function $h :\RR^n\to (-\infty,\infty]$ is defined by 
\begin{equation}\label{def:epsSubdiff}
\partial_\epsilon h(z):=\{u\in \RR^n: h(z')\geq h(z)+\inner{u}{z'-z}-\epsilon, \quad \forall z' \in \RR^n\}
\end{equation}
for every $z\in \RR^n$.	
The classical subdifferential, denoted by $\partial h(\cdot)$,  corresponds to $\partial_0 h(\cdot)$.  
Recall that, for a given $\epsilon\geq 0$, the $\epsilon$-normal cone of a closed convex set $C$ at $z\in C$, denoted by  $N^{\epsilon}_C(z)$, is 
$$N^{\epsilon}_C(z):=\{\xi \in \RR^n: \inner{\xi}{u-z}\leq \epsilon,\quad \forall u\in C\}.$$
The normal cone of a closed convex set $C$ at $z\in C$ is denoted by  $N_C(z)=N^0_C(z)$.

Given a differentiable function $\psi : \RR^n \to \RR$,
its affine approximation at a point $\bar z \in \RR^n$ is
\begin{equation}\label{eq:defell}
\ell_\psi(z;\bar z) :=  \psi(\bar z) + \inner{\nabla \psi(\bar z)}{z-\bar z} \quad \forall  z \in \RR^n.
\end{equation}
The function $\psi$ is $L$-smooth on a set $\Omega \subset \RR^n$ if its gradient is $L$-Lipschitz continuous on $\Omega$, i.e.,
\begin{equation}\label{upper curvature f}
\|\nabla \psi(x') -  \nabla \psi(x)\|\le L \|x'-x\| \quad \forall x,x' \in \Omega.
\end{equation}
The set of $L$-smooth functions on $\Omega$ is denoted by $\mathcal{C}^{1}(\Omega;L)$.

\section{Hybrid Low-Rank Method}\label{s:fw}
This section introduces a Hybrid Low-Rank (HLR) method which, as outlined in the introduction, uses a combination of the ADAP-AIPP method and Frank-Wolfe steps for approximately solving convex problems of the form as in \eqref{eq:AL}.
This section consists of three subsections. The first subsection introduces the main problem that the HLR method considers and introduces a notion of the type of approximate solution that it aims to find.
The second subsection presents the ADAP-AIPP method and its complexity results. The third subsection states the complete HLR method and establishes its total complexity.

\subsection{Problem of Interest and Solution Type}
Let $g : \mathbb S^n \to \RR$ be a convex and differentiable function.
The HLR method is developed in the context of solving the problem
\begin{equation}\label{AL subproblem}
    g_* := \min \left\{g\left(Z\right) : Z \in \Delta^n \right\}
\end{equation}
where $\Delta^n$ is the spectraplex as in \eqref{Delta Definition}
and $g$ is $L_g$-smooth on $\Delta^n$, i.e.,
there exists $L_g \geq 0$ such that
\begin{equation}\label{upper curvature g}
\|\nabla g(Z') -  \nabla g(Z)\|_{F}\le L_{g} \|Z'-Z\|_{F} \quad \forall Z,Z' \in \Delta^{n}.
\end{equation}

The goal of the HLR method is to find a near-optimal solution of \eqref{AL subproblem} whose definition is given immediately after the next result.

\begin{lem}\label{Normal Cone Delta Result}
Let $Z \in \Delta^n$ be given and define
\begin{equation}\label{theta bar x}
\theta (Z):=\max\{-\lambda_{\min}(\nabla g(Z)), 0\}.
\end{equation}
Then:
\begin{itemize}
    \item[a)] there hold
   \begin{equation}\label{PSD dual}
 \theta(Z) \ge 0, \quad
       \nabla g(Z)+\theta(Z) I \succeq 0;
\end{equation}
\item[b)]
for any $\epsilon>0$,
the inclusion holds
\begin{equation}\label{epsilon approximate optimality}
0 \in \nabla g(Z)+\partial_{\epsilon} \delta_{\Delta^n}(Z)
\end{equation}
if and only if
\begin{equation}\label{normal cone result}
   \nabla g(Z)\bullet Z +\theta(Z) \leq \epsilon.
   \end{equation}
\end{itemize}
   
\end{lem}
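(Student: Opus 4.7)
The plan is to handle part (a) directly from the definition of $\theta(Z)$ and elementary spectral properties, and then reduce part (b) to a computation of the support function of the spectraplex.

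For part (a), the inequality $\theta(Z)\ge 0$ is immediate from the defining maximum in \eqref{theta bar x}. For the semidefiniteness, I would observe that $\theta(Z)\ge -\lambda_{\min}(\nabla g(Z))$ by \eqref{theta bar x}, so that the minimum eigenvalue of $\nabla g(Z)+\theta(Z)I$, which is $\lambda_{\min}(\nabla g(Z))+\theta(Z)$, is nonnegative, giving \eqref{PSD dual}.

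For part (b), I would first recall the standard identification $\partial_\epsilon \delta_{\Delta^n}(Z)=N^{\epsilon}_{\Delta^n}(Z)$ that follows from \eqref{def:epsSubdiff} applied to the indicator function. Thus \eqref{epsilon approximate optimality} is equivalent to $-\nabla g(Z)\in N^{\epsilon}_{\Delta^n}(Z)$, i.e.,
\begin{equation*}
    \nabla g(Z)\bullet Z - \min_{U\in \Delta^n}\, \nabla g(Z)\bullet U \;\le\; \epsilon.
\end{equation*}
The key step is then to compute the support-type quantity $m(Z):=\min_{U\in\Delta^n}\nabla g(Z)\bullet U$. Using the spectral characterization of $\Delta^n$, I would argue that $m(Z)=\min\{\lambda_{\min}(\nabla g(Z)),0\}$: when $\lambda_{\min}(\nabla g(Z))\ge 0$, the choice $U=0$ attains the minimum value $0$ since $\nabla g(Z)\bullet U\ge 0$ for all $U\succeq 0$; when $\lambda_{\min}(\nabla g(Z))<0$, taking $U=vv^T$ for a unit eigenvector $v$ associated with $\lambda_{\min}(\nabla g(Z))$ yields value $\lambda_{\min}(\nabla g(Z))$, and no $U\in\Delta^n$ can do better because $\nabla g(Z)\bullet U \ge \lambda_{\min}(\nabla g(Z))\,\tr U\ge \lambda_{\min}(\nabla g(Z))$. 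In either case, $m(Z)=-\theta(Z)$ by \eqref{theta bar x}, and substituting this into the displayed inequality yields exactly \eqref{normal cone result}.

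The only mildly delicate step is the two-case verification that $\min_{U\in\Delta^n}\nabla g(Z)\bullet U=-\theta(Z)$, since the trace bound is an inequality rather than an equality and the case split on the sign of $\lambda_{\min}(\nabla g(Z))$ must be handled explicitly; everything else is a direct manipulation of the definitions.
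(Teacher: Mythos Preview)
Your proof is correct and follows essentially the same approach as the paper. The paper defers part (b) to an appendix proposition (Proposition~\ref{Normal Cone Characterization Appendix}), whose proof amounts to exactly the computation you carry out here: identifying $\partial_\epsilon\delta_{\Delta^n}=N^\epsilon_{\Delta^n}$ and then showing, via the same two-case analysis on the sign of $\lambda_{\min}(\nabla g(Z))$, that $\min_{U\in\Delta^n}\nabla g(Z)\bullet U=-\theta(Z)$.
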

\begin{proof}
(a) The result is immediate from the definition of $\theta(Z)$ in \eqref{theta bar x}.

(b) It is easy to see that $\partial_{\epsilon} \delta_{\Delta^n}(Z)=N^{\epsilon}_{\Delta^n}(Z)$. Statement (b) then follows immediately from this observation, the definition of $\theta(Z)$ in \eqref{theta bar x}, and Proposition~\ref{Normal Cone Characterization Appendix}(b) with $G=\nabla g(Z)$.
\end{proof}




Relation \eqref{normal cone result} provides an easily verifiable condition for checking whether $Z$ satisfies inclusion \eqref{epsilon approximate optimality}. Moreover, \eqref{normal cone result} is equivalent to the ``complementary slackness'' condition
\begin{align}\label{Comp Slackness}
   \left(1 - \tr Z\right)\theta(Z) + [\nabla g(Z)+\theta(Z) I]\bullet Z\leq \epsilon.
\end{align}

\begin{definition}\label{def:sdp-2}
An $\epsilon$-optimal solution of \eqref{AL subproblem} is a matrix $Z \in \Delta^n$ satisfying relation \eqref{epsilon approximate optimality} or \eqref{normal cone result}. 
\end{definition}



The next lemma shows that the objective value of an $\epsilon$-optimal solution of \eqref{AL subproblem} is within $\epsilon$ of the optimal value of \eqref{AL subproblem}.

\begin{lemma}
  An $\epsilon$-optimal solution $Z$ of \eqref{AL subproblem} satisfies that $g(Z)-g_{*}\leq \epsilon$.
\end{lemma}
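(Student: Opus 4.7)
The plan is to unwind the definition of $\epsilon$-optimality via the $\epsilon$-subdifferential and then invoke convexity of $g$ to compare $g(Z)$ to $g(Z_*)$ at any minimizer $Z_* \in \Delta^n$.

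First, I would start from the characterization in \eqref{epsilon approximate optimality}, which says $-\nabla g(Z) \in \partial_\epsilon \delta_{\Delta^n}(Z)$. Since $\partial_\epsilon \delta_{\Delta^n}(Z) = N^\epsilon_{\Delta^n}(Z)$ (as noted in the proof of Lemma~\ref{Normal Cone Delta Result}(b)), applying the definition of the $\epsilon$-normal cone gives the key inequality
\begin{equation*}
\inner{-\nabla g(Z)}{Z'-Z} \le \epsilon \quad \text{for all } Z' \in \Delta^n,
\end{equation*}
or equivalently, $\nabla g(Z) \bullet (Z'-Z) \ge -\epsilon$ for every feasible $Z'$.

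Next, since $g$ is convex and differentiable on a neighborhood of $\Delta^n$, the standard gradient inequality yields $g(Z') \ge g(Z) + \nabla g(Z) \bullet (Z'-Z)$ for all $Z' \in \Delta^n$. Combining with the previous bound gives $g(Z') \ge g(Z) - \epsilon$ for every $Z' \in \Delta^n$. Taking the infimum over $Z' \in \Delta^n$ produces $g_* \ge g(Z) - \epsilon$, which is the desired conclusion $g(Z) - g_* \le \epsilon$.

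There is really no obstacle here: the argument is the textbook one showing that an $\epsilon$-subgradient inclusion implies $\epsilon$-suboptimality for a convex problem. The only thing to be careful about is making clear that Definition~\ref{def:sdp-2} (via \eqref{epsilon approximate optimality}) is being used in its $\partial_\epsilon$ form rather than the equivalent eigenvalue form \eqref{normal cone result}, so that convexity of $g$ can be plugged in directly. Alternatively, one could start from \eqref{normal cone result} and use the fact that, for any $Z' \in \Delta^n$, $\nabla g(Z) \bullet Z' \ge -\theta(Z) \tr(Z') \ge -\theta(Z)$ since $\nabla g(Z) + \theta(Z) I \succeq 0$ and $\tr(Z') \le 1$; combining with \eqref{normal cone result} yields the same inequality $\nabla g(Z) \bullet (Z' - Z) \ge -\epsilon$, from which convexity of $g$ closes the argument.
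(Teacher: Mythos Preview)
Your proof is correct and follows essentially the same approach as the paper: both use the inclusion $-\nabla g(Z)\in N^\epsilon_{\Delta^n}(Z)$ to obtain $\inner{\nabla g(Z)}{Z'-Z}\ge -\epsilon$ and then invoke convexity of $g$. The only cosmetic difference is that the paper applies the inequality directly at an optimal $Z_*$, whereas you apply it for all $Z'\in\Delta^n$ and then take the infimum.
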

\begin{proof}
  Let $Z_*$ be an optimal solution of \eqref{AL subproblem}.
  Relation \eqref{epsilon approximate optimality} implies that $-\nabla g(Z) \in N^{\epsilon}_{\Delta^n}(Z)$
  and hence that $\inner{-\nabla g(Z)}{Z_*-Z} \leq \epsilon$.
It then follows from this relation and the fact that $g$ is convex that
  \begin{equation*}
    g(Z_*) - g(Z) \geq \inner{\nabla g(Z)}{Z_* - Z} \geq - \epsilon,
  \end{equation*}
which immediately implies the result.
\end{proof}

\subsection{The ADAP-AIPP Method}\label{ADAP-AIPP Method}

As already mentioned in the introduction, one iteration of the HLR method consists of a call to the ADAP-AIPP method followed by a FW step.
The purpose of this subsection is to describe the details of the ADAP-AIPP method.

For a given integer $s$,
consider the
the subproblem obtained by restricting \eqref{AL subproblem} to matrices $Z$ of rank at most~$s$, or equivalently, the reformulation
\begin{equation}\label{factorized problem}
\min \{\tilde g(U):=g(UU^{T}) \;:\: U \in \bar B_1^s\},
\end{equation}
where 
\begin{equation}\label{dimensional ball}
\bar B_r^s:=\{U \in \RR^{n \times s}:\|U\|_{F}\leq r\}
\end{equation}
denotes the Frobenius ball of radius $r$ in $\RR^{n \times s}$.
In this subsection, the above set will be denoted by $\bar B_r$
since the column dimension $s$ remains constant throughout its presentation.

The goal of the ADAP-AIPP method is to find an approximate stationary solution of \eqref{factorized problem} as described in
Proposition~\ref{New Main Result ADAP-AIPP}(a) below.
Briefly, ADAP-AIPP is an inexact proximal point method which attempts to solve its (potentially nonconvex) prox subproblems using an accelerated composite gradient method, namely,
ADAP-FISTA, whose description is given in Appendix~\ref{ADAP-FISTA}.
A rough description of the $j$-th iteration of ADAP-AIPP is as follows: given $W_{j-1} \in \bar B_1$ and a positive scalar $\lambda_{j-1}$, ADAP-AIPP calls 
the ADAP-FISTA method 
to attempt to find a suitable approximate solution of the possibly nonconvex proximal subproblem
\begin{align}\label{ADAP-AIPP subproblem}
\min_{U\in \bar B_{1}} \left\{\lambda \tilde g(U)+ \frac{1}{2}\|U- W_{j-1}\|^2_{F} \right\},
\end{align}
where the first call made is always performed with $\lambda=\lambda_{j-1}$.  If ADAP-FISTA successfully finds such a solution, ADAP-AIPP sets this solution as its next iterate $W_j$ and sets $\lam$ as its next prox stepsize $\lambda_j$. If ADAP-FISTA is unsuccessful, ADAP-AIPP invokes it again to attempt to solve \eqref{ADAP-AIPP subproblem} with $\lambda=\lambda/2$.  This loop always terminates since ADAP-FISTA is guaranteed to terminate with success when the objective in \eqref{ADAP-AIPP subproblem} becomes strongly convex,
which occurs when $\lambda$ is sufficiently small.

The formal description of the ADAP-AIPP method is presented below.
For the sake of simplifying the input lists of the algorithms stated throughout this paper,
the parameters $\sigma$ and $\chi$ are
considered universal ones
(and hence not input parameters).


\noindent\begin{minipage}[t]{1\columnwidth}%
\rule[0.5ex]{1\columnwidth}{1pt}

\noindent \textbf{ADAP-AIPP Method}

\noindent \rule[0.5ex]{1\columnwidth}{1pt}%
\end{minipage}
\noindent \textbf{Universal Parameters}: $\sigma\in (0,1/2)$ and $\chi \in (0,1)$.

\noindent \textbf{Input}: quadruple $(\tilde g, \lambda_0, \underline W, \bar \rho) \in (\mathcal{C}^{1}\left(\Delta^{n};L_g\right), \RR_{++}, \bar B_1, \RR_{++})$.




\begin{itemize}
\item[{\bf 0.}] set $W_0=\underline W$, $j=1$, and
\begin{equation}\label{def:lamb-C1}
\lambda=\lambda_{0}, \quad \bar M_0 = 1;
\end{equation}

\item[{\bf 1.}]  choose
$\underbar M_j \in [1, \bar M_{j-1}]$ and call the ADAP-FISTA method in Appendix~\ref{ADAP-FISTA}
with universal input $(\sigma, \chi)$ and inputs
\begin{align}
x_0&=W_{j-1}, \quad
(\mu,L_0)= (1/2,\underbar M_j)\label{eq:Ms-mu}, \\
\psi_s &=\lam \tilde g+\frac{1}{2}\|\cdot-W_{j-1}\|_{F}^2 , \quad \psi_n = \lam \delta_{\bar B_1} \label{eq:psiS-psimu};
\end{align}

\item[{\bf 2.}]
if ADAP-FISTA fails or its output 
$(W,V,L)$ (if it succeeds) does not satisfy the inequality \begin{equation}\label{subdiff ineq check}
	   \lambda \tilde g(W_{j-1}) - \left [\lambda \tilde g(W) + \frac{1}{2} \|W-W_{j-1}\|_{F}^2 \right ] \ge V \bullet (W_{j-1}-W),
	   \end{equation}
then set $\lam=\lam/2$ and go to step $1$; else, set
	$(\lam_j,\bar M_j)=(\lam,L)$, $(W_{j},V_{j})=(W,V)$,
	and 
	\begin{align}
    &R_j:=\frac{V_j+W_{j-1}-W_j}{\lambda_j}\label{Rj def}
	\end{align}
	and
	go to step~3;
\item[{\bf 3.}]	
if $\|R_j\|_{F}\leq \bar \rho$,
then stop with success and output $(\overline W,\overline R)=(W_j,R_j)$; else, go to step~4;

\item[{\bf 4.}] 
set $j\gets j+1$ and go to step~1. 
\end{itemize}
\noindent \rule[0.5ex]{1\columnwidth}{1pt}

Several remarks about ADAP-AIPP are now given.
First, at each iteration, steps 1 and 2 successively call the ADAP-FISTA method with inputs given by \eqref{eq:Ms-mu} and \eqref{eq:psiS-psimu} to obtain a prox stepsize $\lambda_j\leq \lambda_{j-1}$ and a pair $(W_j,V_j)$ satisfying \eqref{subdiff ineq check} and 
\begin{equation}\label{relative error}
\|V_j\|_{F}\leq \sigma\|W_j-W_{j-1}\|_{F}, \quad V_j \in \lambda_j \left[\nabla \tilde g(W_j)+\partial \delta_{\bar B_1}(W_j)\right]+(W_j-W_{j-1})
\end{equation}
where $\sigma$ is part of the input of ADAP-AIPP. Such a pair $(W_j,V_j)$ can be viewed as an approximate stationary solution of prox subproblem \eqref{ADAP-AIPP subproblem} with $\lambda=\lambda_j$, where the residual
$V_j$ is relaxed from being
zero to a quantity that is
now relatively bounded as in \eqref{relative error}.
Second, it follows immediately from the inclusion in relation \eqref{relative error} and the definition of $R_j$ in \eqref{Rj def} that the pair $(W_j,R_j)$ computed in step 2 of ADAP-AIPP satisfies the inclusion $R_j \in \nabla \tilde g(W_j)+\partial \delta_{\bar B_1}(W_j)$ for every iteration $j \geq 1$. As a consequence, if ADAP-AIPP terminates in step 3, then the pair $(\overline W,\overline R)=(W_j,R_j)$ output by this step is a $\bar \rho$-approximate stationary solution of \eqref{factorized problem}, i.e., it satisfies
\begin{equation}\label{stationary solution}
    \overline R\in \nabla \tilde g(\overline W)+\partial \delta_{\bar B_1}(\overline W), \quad \|\overline R\|_{F}\leq \bar \rho.
\end{equation} Finally, it is interesting to note that ADAP-AIPP is a universal method in that it requires no knowledge of any parameters (such as objective function curvatures) underlying problem \eqref{factorized problem}. 

Before stating the main complexity result of the ADAP-AIPP method, the following quantities are introduced
\begin{equation}
\bar G:=\sup \{\|\nabla g(UU^{T})\|_{F} : U\in \bar B_3\}, \quad L_{\tilde g}:=2\bar G+36L_g, \label{phi* alpha0 quantities}
\end{equation}
\begin{equation}
\underline \lambda:=\min\{\lambda_0, 1/(4L_{\tilde g})\}, \quad C_{\sigma}=\frac{2(1-\sigma)^2}{1-2\sigma},\label{phi* alpha0 quantities-2}
\end{equation}
where $\lambda_0$ is the initial prox stepsize of ADAP-AIPP and $L_{g}$ and $\bar B_3$ are as in \eqref{upper curvature g} and \eqref{dimensional ball}, respectively.
Observe that $C_\sigma$ is well-defined and positive due to the fact that $\sigma \in (0,1/2)$.

The main complexity result of ADAP-AIPP is stated in the proposition below.
Its proof is in Appendix~\ref{ADAP-AIPP Appendix Proof}.  
\begin{proposition}\label{New Main Result ADAP-AIPP}
The following statements about ADAP-AIPP hold:
\begin{itemize}
\item[(a)]
ADAP-AIPP terminates with a pair $(\overline W,\overline R)$ that is a $\bar \rho$-approximate stationary solution of \eqref{factorized problem}
and its last iteration index $l$ satisfies
\begin{equation}\label{iteration bound}
1 \le l \leq \mathcal T:=1+\frac{C_{\sigma}}{\underline \lambda \bar \rho^2} \left[\tilde g(\underline W)-\tilde g(\overline W)\right],
\end{equation}
where $\bar \rho>0$ is an input tolerance, $\underline W$ is the initial point, and $C_{\sigma}$ and $\underline \lambda$ are as in \eqref{phi* alpha0 quantities-2};
\item[(b)] the total number of ADAP-FISTA calls performed by ADAP-AIPP is no more than
\begin{equation}\label{ADAP FISTA Calls Bound}
\mathcal T+\lceil \log_0^+(\lambda_0/{\underline \lam})/\log 2 \rceil
\end{equation}
where $\lambda_0$ is the initial prox stepsize and $\mathcal T$ is as in \eqref{iteration bound}.
\end{itemize}
\end{proposition}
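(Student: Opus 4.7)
The plan is to establish two per-iteration facts and then derive both parts from them: a descent estimate $\tilde g(W_{j-1}) - \tilde g(W_j) \ge \lambda_j \|R_j\|_F^2/C_\sigma$ along accepted iterates, and a uniform lower bound $\lambda_j \ge \underline\lambda$ on the accepted prox stepsize. For the descent estimate I would start from \eqref{subdiff ineq check}, rearranged as
\[
\lambda_j[\tilde g(W_{j-1}) - \tilde g(W_j)] \ge \tfrac{1}{2}\|W_j - W_{j-1}\|_F^2 + V_j \bullet (W_{j-1} - W_j),
\]
and then use the identity $V_j + W_{j-1} - W_j = \lambda_j R_j$ from \eqref{Rj def} to complete the square and rewrite the right-hand side as $\tfrac12 \lambda_j^2 \|R_j\|_F^2 - \tfrac12 \|V_j\|_F^2$. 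The relative error bound $\|V_j\|_F \le \sigma \|W_j - W_{j-1}\|_F$ from \eqref{relative error} together with the triangle inequality $(1-\sigma)\|W_j - W_{j-1}\|_F \le \lambda_j \|R_j\|_F$ then yields $\|V_j\|_F^2 \le \sigma^2 \lambda_j^2 \|R_j\|_F^2 / (1-\sigma)^2$; substituting and collecting terms produces exactly the descent bound with the constant $C_\sigma = 2(1-\sigma)^2/(1-2\sigma)$ from \eqref{phi* alpha0 quantities-2}.

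For the stepsize lower bound, a direct calculation based on $\nabla \tilde g(U) = 2\nabla g(UU^T) U$ and the identity $\|UU^T - U'U'^T\|_F \le (\|U\|_F + \|U'\|_F)\|U - U'\|_F$ on $\bar B_3$ shows that $\tilde g \in \mathcal{C}^1(\bar B_3; L_{\tilde g})$ with $L_{\tilde g} = 2\bar G + 36 L_g$, as in \eqref{phi* alpha0 quantities}. Once $\lambda$ is small enough to force $\psi_s = \lambda \tilde g + \tfrac12\|\cdot - W_{j-1}\|_F^2$ to be $(1/2)$-strongly convex, which holds as soon as $\lambda \le 1/(4L_{\tilde g})$, the guarantees of ADAP-FISTA in Appendix~\ref{ADAP-FISTA} applied with the inputs in \eqref{eq:Ms-mu}--\eqref{eq:psiS-psimu} deliver a successful output $(W,V,L)$ that satisfies both \eqref{relative error} and \eqref{subdiff ineq check}. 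Since the halving loop in step~2 only divides $\lambda$ by $2$ upon failure, the first successful $\lambda$ at any iteration cannot fall below $\min\{\lambda_0, 1/(4L_{\tilde g})\} = \underline\lambda$.

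Part (a) then follows by telescoping: if $l$ is the terminating iteration, then $\|R_j\|_F > \bar\rho$ for $j = 1, \ldots, l-1$, so summing the descent bound gives $\tilde g(\underline W) - \tilde g(\overline W) \ge (l-1) \underline\lambda \bar\rho^2 / C_\sigma$, which rearranges to $l \le \mathcal T$; that $(\overline W, \overline R)$ is a $\bar\rho$-approximate stationary point of \eqref{factorized problem} is immediate from the inclusion in \eqref{relative error}, the definition of $R_j$ in \eqref{Rj def}, and the termination test in step~3. For part (b), the number of successful ADAP-FISTA calls is exactly the iteration count $l \le \mathcal T$, while the unsuccessful ones each halve $\lambda$ without driving it below $\underline\lambda$, so their total is at most $\lceil \log_2(\lambda_0/\underline\lambda) \rceil = \lceil \log_0^+(\lambda_0/\underline\lambda)/\log 2 \rceil$, giving the announced bound. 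The main obstacle I anticipate is the stepsize-acceptance argument: one must carefully match the ADAP-FISTA input parameters against the hypotheses of its appendix-level guarantees to certify that both \eqref{relative error} and \eqref{subdiff ineq check} are produced whenever $\lambda \le 1/(4L_{\tilde g})$, the remaining steps being essentially mechanical once this is in hand.
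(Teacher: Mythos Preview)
Your proposal follows essentially the same route as the paper's proof in Appendix~\ref{ADAP-AIPP Appendix Proof}: establish that $\tilde g$ is $L_{\tilde g}$-smooth on $\bar B_3$, derive the per-iteration descent bound by completing the square in \eqref{subdiff ineq check} and using \eqref{relative error}, show the accepted stepsize never drops below $\underline\lambda$, and then telescope. One small correction on the stepsize argument: $(1/2)$-strong convexity of $\psi_s$ already holds once $\lambda \le 1/(2L_{\tilde g})$ (since $\psi_s$ is $(1-\lambda L_{\tilde g})$-strongly convex on $\bar B_1$), and it is this sharper threshold that you need---if success is only guaranteed at $\lambda \le 1/(4L_{\tilde g})$ as you wrote, then the halving could overshoot to a value as small as just above $1/(8L_{\tilde g})$, which would not give $\lambda_j \ge \underline\lambda$. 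With the threshold $1/(2L_{\tilde g})$, the single possible overshoot by a factor of two lands exactly at $\underline\lambda = \min\{\lambda_0,1/(4L_{\tilde g})\}$, matching \eqref{phi* alpha0 quantities-2}.
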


Some remarks about Proposition~\ref{New Main Result ADAP-AIPP} are now in order.
First, it follows from statement (a) that
$\tilde g(\overline W)\leq\tilde g(\underline W)$. Second, recall that each ADAP-AIPP iteration may perform more than a single ADAP-FISTA call. Statement (b) implies that 
the total number of ADAP-FISTA calls performed is at most the total number of ADAP-AIPP iterations performed plus a logarithmic term.


\subsection{HLR Method}\label{HLR Method}
The goal of this subsection is to describe the HLR method for solving problem \eqref{AL subproblem}. 
The formal description of the HLR method is presented below.
\noindent\begin{minipage}[t]{1\columnwidth}%
\rule[0.5ex]{1\columnwidth}{1pt}

\noindent \textbf{HLR Method}

\noindent \rule[0.5ex]{1\columnwidth}{1pt}%
\end{minipage}

\noindent \textbf{Input}:  A quintuple $(\bar Y_0, g, \bar \epsilon, \bar \rho, \lambda_0) \in \bar B^{s_0}_1 \times \mathcal{C}^{1}\left(\Delta^{n};L_g\right) \times \RR^{3}_{++}$ for some $s_0 \ge 1$.


\noindent \textbf{Output}: $\bar Y\in \bar B^{s}_1$ for some $s \ge 1$ such that $\bar Y\bar Y^{T}$ is an $\bar \epsilon$-optimal solution of \eqref{AL subproblem}.

\begin{itemize}
\item[{\bf 0.}] set  $\tilde Y_0 = \bar Y_0$,
$s=s_0$, and $k=1$.

\item[{\bf 1.}]  call ADAP-AIPP with quadruple $(g,\lambda_0, \bar \rho, \underline W)=(g,\lambda_0, \bar \rho,\tilde Y_{k-1})$ and let $(Y_{k},\mathcal R_k)\in \bar B^s_1 \times \RR^{n \times s}$ denote its output pair $(\overline W,\overline R)$;

\item[{\bf 2.}]
compute 
\begin{equation}\label{FW subproblem eq}
\theta_k = \max\{-\lambda_{\min}(G_k) , 0\}, \qquad
y_k= \begin{cases}
     v_{\min}(G_k) & \text{ if $\theta_k>0$},\\
        0 & \text{ otherwise},
        \end{cases}
\end{equation}
where
\begin{equation}\label{gradient Def}
G_k := \nabla g(Y_kY_k^T) \in \mathbb S^n
\end{equation}
and $(\lam_{\min}(G_k), v_{\min}(G_k)) \in \R \times \R^n$ is a minimum eigenpair of $G_k$;
\item[{\bf 3.}]
set
\begin{equation}\label{FW subproblem eq-2}
\epsilon_k:=(G_kY_k)\bullet Y_k+\theta_k.
\end{equation}
If 
\begin{equation}\label{normal cone delta}
\epsilon_{k} \leq \bar \epsilon
\end{equation}
then {\bf stop} and output pair $(\bar Y, \bar \theta)= (Y_{k}, \theta_k)$; else go to step 4; 
\item[\bf 4.] compute \begin{equation}\label{stepsize Yk}
    \alpha_k=\argmin_{\alpha}\left\{g\left(\alpha y_{k}(y_{k})^{T}+(1-\alpha)Y_kY^{T}_k\right): \alpha \in [0,1] \right\}\}
\end{equation}
and set 
\begin{equation}\label{tilde Y}
(\tilde Y_{k},s)=\begin{cases}
(y_k,1) & \text{ if $\alpha_{k}=1$}\\
\left(\left[\sqrt{1-\alpha_{k}} Y_k, \sqrt{\alpha_k} {y_k}\right],s+1\right)& \text{ otherwise};
\end{cases}
\end{equation}
\item[{\bf 5.}] 
set $k\gets k+1$ and go to step~1. 
\end{itemize}
\noindent \rule[0.5ex]{1\columnwidth}{1pt}

Remarks about each of the steps in HLR are now given.
First, step 1 of the $k$-th iteration of HLR calls ADAP-AIPP with initial point $\tilde Y_{k-1} \in \bar B_1^s$ to produce an iterate $Y_k \in \bar B_1^s$ that is an $\bar \rho$-approximate stationary solution of nonconvex problem \eqref{factorized problem}.
Second, step~2 performs a minimum eigenvector (MEV) computation to compute the quantity $\theta_k$ in \eqref{FW subproblem eq}, which is needed for the termination check performed in step~3. Third, the definition of $\theta(\cdot)$ in \eqref{theta bar x}, and relations
\eqref{gradient Def}, \eqref{FW subproblem eq}, and \eqref{FW subproblem eq-2}, imply that
\[
\theta_k=\theta(Y_kY_k^{T}), \quad
\epsilon_k = \nabla g(Y_kY_k^{T}) \bullet (Y_kY_k^{T})+\theta(Y_kY_k^{T}).
\]
Hence, it follows from Lemma~\ref{Normal Cone Delta Result}(b) that termination criterion \eqref{normal cone delta} in step 3 is equivalent to checking if $Y_kY_k^{T}$ is a $\bar \epsilon$-optimal solution of \eqref{AL subproblem}.
Fourth, if the termination criterion in step 3 is not satisfied,
a FW step at $Y_kY_k^{T}$ for \eqref{AL subproblem} is taken in step 4
to produce an iterate $\tilde Y_k\tilde Y_k^{T}$ as in \eqref{stepsize Yk} and \eqref{tilde Y}. 
The computation of $\tilde Y_k$
is entirely performed in the $Y$-space to avoid forming
matrices of the form $YY^T$, and
hence to save storage space.
The reason for performing this FW step is to make HLR escape from the spurious near stationary point $Y_k$ of \eqref{factorized problem} (see the end of the paragraph containing \eqref{eq:Lr} in the Introduction).
Finally, the quantity $s$ as in \eqref{tilde Y} keeps track of the column dimension of the most recently generated $\tilde Y_k$.
It can either increase by one or be set to one after the update \eqref{tilde Y}
is performed.


The complexity
of the HLR method is described in the result below
whose
proof is given in the next subsection. 

\begin{thm}\label{thm:complexityLRHFW}
The following statements about the HLR method hold:
\begin{itemize}
\item[(a)] the HLR method outputs a point $\bar Y$ such that $\bar Z = \bar Y \bar Y^T$ is a $\bar \epsilon$-optimal solution of \eqref{AL subproblem}
in at most
    \begin{equation}\label{Hybrid FW Method Exact Complexity-Body}
    {\cal S}(\bar \epsilon) := \left \lceil 1+\frac{4\max\left\{g(\bar Z_0)-g_{*},\sqrt{4L_g(g(\bar Z_0)-g_{*})}, 4L_{g}\right\}}{\bar \epsilon} \right\rceil
    \end{equation}
iterations (and hence MEV computations) where $\bar Z_0=\bar Y_0\bar Y^{T}_0$, $L_g$ is as in \eqref{upper curvature g}, and $g_{*}$ is the optimal value of \eqref{AL subproblem};
\item[(b)] the total number of ADAP-FISTA calls performed by the HLR method is no more than
\begin{align}\label{Total Complexity HLR}
(1+\mathcal Q){\cal S}(\bar \epsilon)
+\frac{C_{\sigma}\max\{8\bar G+144 L_g,1/\lambda_0\}}{\bar \rho^2}\left[g(\bar Z_0)-g(\bar Z)\right]
\end{align}
where $\lambda_0$ is given as input to the HLR method, and $\bar G$, $C_{\sigma}$, and $\mathcal Q$ are as in \eqref{phi* alpha0 quantities}, \eqref{phi* alpha0 quantities-2}, and \eqref{iteration bound-3}, respectively.
\end{itemize}
\end{thm}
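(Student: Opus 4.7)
The plan is to exploit two descent mechanisms acting in every HLR iteration: the Frank--Wolfe (FW) step in step~4, which decreases the objective whenever the termination test \eqref{normal cone delta} fails, and the subsequent ADAP-AIPP call in step~1, which by Proposition~\ref{New Main Result ADAP-AIPP}(a) is itself descending on $\tilde g(U)=g(UU^T)$. The starting observation is that $\epsilon_k$ coincides with the Frank--Wolfe gap of $g$ at $Z_k:=Y_kY_k^T$ over $\Delta^n$: the linear minimization oracle value is $\min_{W\in\Delta^n}\langle G_k,W\rangle=\min\{\lambda_{\min}(G_k),0\}=-\theta_k$, so by \eqref{FW subproblem eq-2}, $\epsilon_k=\langle G_k,Z_k\rangle+\theta_k=\max_{W\in\Delta^n}\langle G_k,Z_k-W\rangle$. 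Together with the bound $\|y_ky_k^T-Z_k\|_F^2\le 2$, the $L_g$-smoothness of $g$ and the exact line search in \eqref{stepsize Yk} yield the FW descent estimate
\[
g(\tilde Y_k\tilde Y_k^T)\le g(Z_k)-\min\!\left\{\frac{\epsilon_k^2}{4L_g},\;\epsilon_k-L_g\right\},
\]
and composing with the ADAP-AIPP descent $g(Z_{k+1})=\tilde g(Y_{k+1})\le\tilde g(\tilde Y_k)=g(\tilde Y_k\tilde Y_k^T)$ (first remark after Proposition~\ref{New Main Result ADAP-AIPP}) produces the per-iteration bound $h_{k+1}\le h_k-\min\{\epsilon_k^2/(4L_g),\epsilon_k/2\}$, where $h_k:=g(Z_k)-g_*$.

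For part (a), I would suppose the method runs for $K$ iterations without termination, so $\epsilon_k>\bar\epsilon$ for $0\le k<K$. A case analysis split by whether $\epsilon_k$ lies in the ``halving regime'' ($\epsilon_k>2L_g$, where by convexity $\epsilon_k\ge h_k$ and the decrement forces $h_{k+1}\le h_k/2$) or the ``sublinear regime'' ($\epsilon_k\le 2L_g$, where the decrement is at least $\epsilon_k^2/(4L_g)>\bar\epsilon^2/(4L_g)$), and then telescoping the decrements against $h_0:=g(\bar Z_0)-g_*$, produces three regime-specific bounds on $K$ whose maximum is $\mathcal S(\bar\epsilon)$. The three entries of the max in \eqref{Hybrid FW Method Exact Complexity-Body} correspond respectively to the halving contribution $O(h_0/\bar\epsilon)$, the transition contribution $O(\sqrt{L_gh_0}/\bar\epsilon)$, and the late sublinear contribution $O(L_g/\bar\epsilon)$.

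For part (b), the plan is a bookkeeping argument via Proposition~\ref{New Main Result ADAP-AIPP}(b). Each outer iteration $k$ invokes ADAP-AIPP exactly once with input $(g,\lambda_0,\bar\rho,\tilde Y_{k-1})$, so the number of internal ADAP-FISTA calls at iteration $k$ is at most $\mathcal T_k+\mathcal Q$, where $\mathcal T_k\le 1+(C_\sigma/(\underline\lambda\bar\rho^2))[g(\bar Z_{k-1})-g(Z_k)]$ with $\bar Z_{k-1}:=\tilde Y_{k-1}\tilde Y_{k-1}^T$. Summing over $k=1,\dots,K$ with $K\le\mathcal S(\bar\epsilon)$ from (a) yields $(1+\mathcal Q)\mathcal S(\bar\epsilon)$ plus the sum $(C_\sigma/(\underline\lambda\bar\rho^2))\sum_k[g(\bar Z_{k-1})-g(Z_k)]$. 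The FW descent $g(\bar Z_k)\le g(Z_k)$ established in the first paragraph makes this sum telescope cleanly to $g(\bar Z_0)-g(\bar Z_K)=g(\bar Z_0)-g(\bar Z)$, and substituting $1/\underline\lambda=\max\{1/\lambda_0,4L_{\tilde g}\}=\max\{1/\lambda_0,8\bar G+144L_g\}$ from \eqref{phi* alpha0 quantities}--\eqref{phi* alpha0 quantities-2} recovers \eqref{Total Complexity HLR}.

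The main obstacle is the two-phase analysis in part (a): producing the precise three-term maximum in $\mathcal S(\bar\epsilon)$ requires carefully separating the halving and sublinear regimes and estimating each separately so that the iteration count scales as $1/\bar\epsilon$ rather than as the generic FW rate $1/\bar\epsilon^2$. Part (b), while largely bookkeeping, crucially depends on the FW descent from step~4 so that the per-iteration ADAP-AIPP progress terms $g(\bar Z_{k-1})-g(Z_k)$ telescope legitimately through the intermediate $\bar Z_k$ sequence.
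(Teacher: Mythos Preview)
Your treatment of part (b) is essentially the paper's argument: summing the per-iteration ADAP-FISTA bounds from Proposition~\ref{New Main Result ADAP-AIPP}(b), using the FW descent $g(\tilde Y_k\tilde Y_k^T)\le g(Y_kY_k^T)$ to telescope, and substituting $1/\underline\lambda=\max\{1/\lambda_0,4L_{\tilde g}\}$. This is fine.

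Part (a), however, has a real gap. Your two-regime case split with ``telescoping the decrements against $h_0$'' does \emph{not} deliver an $O(1/\bar\epsilon)$ iteration bound. In the sublinear regime ($\epsilon_k\le 2L_g$) your per-step decrement is only $\epsilon_k^2/(4L_g)>\bar\epsilon^2/(4L_g)$, so summing against $h_0$ yields at most $4L_gh_0/\bar\epsilon^2$ sublinear steps --- the generic $O(1/\bar\epsilon^2)$ Frank--Wolfe rate you explicitly say you want to avoid. The labels ``transition contribution $O(\sqrt{L_gh_0}/\bar\epsilon)$'' and ``late sublinear contribution $O(L_g/\bar\epsilon)$'' are not derivable from the argument you wrote down; nothing in a plain telescoping of fixed lower bounds on the decrement produces those scalings.

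What is missing is the mechanism the paper uses in Appendix~\ref{FW Appendix}. After identifying $Z_k$ and $\tilde Z_k$ as iterates of the Relaxed FW method (Proposition~\ref{FW Equivalences}), the paper first proves the \emph{function value} rate $g(Z_k)-g_*\le \frac{2}{k-1}\max\{h_0,L_gD_\Omega^2\}$ via the reciprocal recurrence $1/\gamma_{j+1}-1/\gamma_j\ge \tfrac12\min\{1/\tilde\gamma_0,1/(L_gD_\Omega^2)\}$ (this requires using $\epsilon_j\ge\gamma_j$ \emph{inside} the recurrence, not merely a fixed lower bound on the decrement). It then applies a doubling-window argument: summing $\tfrac{\hat\alpha_j}{2}\epsilon_j$ over $j\in[k,2k]$ and bounding the total by $\gamma_k=O(1/k)$ converts the function-value rate into $\min_{k\le j\le 2k}\epsilon_j=O(1/k)$, which is exactly where the $\sqrt{L_gh_0}$ and $L_g$ terms in \eqref{Hybrid FW Method Exact Complexity-Body} arise (with $D_\Omega^2\le 4$). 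Your sketch contains neither the reciprocal recurrence nor the window step, and without them the argument stalls at $O(1/\bar\epsilon^2)$.
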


Two remarks about Theorem~\ref{thm:complexityLRHFW} are now given.
First, it follows from statement (a) that
HLR performs $\mathcal O\left(1/\bar \epsilon\right)$ iterations (and hence MEV computations). Second, statement (b) implies that 
the total number of ADAP-FISTA calls performed by HLR is $\mathcal O\left(1/\bar \epsilon+1/\bar \rho^2\right)$ where $\bar \rho$ is the input tolerance for each ADAP-AIPP call in step 1 of HLR.

Recall from the Introduction that
an iteration of the \ourmethod method described in \eqref{eq:AL} and \eqref{lm update}
has to approximately solve subproblem \eqref{eq:AL} and that the HLR method specialized to the case where $g(\cdot)=\mathcal L_{\beta}(\cdot;p)$ is the novel proposed tool towards solving it.
In the following,
we discuss how to specialize some of the steps of the HLR to this case.
Step 1 of HLR calls the ADAP-AIPP method whose steps can be easily implemented if it is known how to project a matrix onto the unit ball $\bar B_1$ and how to compute $\nabla \tilde g(Y)$ where $\tilde g(Y)$ is as in \eqref{factorized problem}.
Projecting a matrix onto the unit ball is easy as it just involves dividing the matrix by its norm. Define the quantities
\begin{equation}\label{Theta Lagrangian}
q(Y;p) := p + \beta (\mathcal{A}(YY^T) - b), \quad
\tilde \theta (Y;p) :=
\max\{-\lambda_{\min}[C+\mathcal A^{*}\left(q(Y;p)\right)], 0\}.
\end{equation}
When $g(\cdot)=\mathcal L_{\beta}(\cdot;p)$,
the matrix $\nabla \tilde g(Y)$ can be explicitly computed as
\begin{equation}\label{Computation of Gradient}
\nabla \tilde g(Y) = 2\nabla g(YY^T)Y, \quad \nabla g(YY^T) =
C+\mathcal A^{*}(q(Y;p)).
\end{equation}
Also, the stepsize $\alpha_k$ in step 4 of HLR has a closed form expression given by
\begin{align*}
  \alpha_k = \min \left \{\frac{C Y_k \bullet Y_k + (\mathcal{A}^*q_k) Y_k \bullet Y_k + \tilde \theta(Y_k;p)}{\beta \|\mathcal{A}(Y_k Y_k^T) - \mathcal{A}(y_k (y_k)^T)\|^2}, \; 1\right\}
\end{align*}
where
\begin{align}
&q_k:=q(Y_k;p), \quad y_k:= \begin{cases}
     v_{\min}\left(\nabla g(Y_kY_k^{T})\right) & \text{ if $\tilde \theta(Y_k;p)>0$,}\\
        0 & \text{ otherwise}.
        \end{cases}
\end{align}

\subsection{Proof of Theorem~\ref{thm:complexityLRHFW}}\label{LRHFW Proof}
This subsection provides the proof of Theorem~\ref{thm:complexityLRHFW}. The following proposition establishes important properties of the iterates generated by HLR.
\begin{prop}\label{FW Equivalences}
    For every $k \ge 1$, define
    \begin{equation}\label{Xk definitions}
Z_k=Y_kY^{T}_{k}, \quad Z^{F}_k=y_ky_k^{T}, \quad D_k:=Z_k-Z_k^{F}, \quad \tilde Z_{k}=\tilde Y_k\tilde Y^{T}_k,
\end{equation}
where $y_k$ is as in \eqref{FW subproblem eq-2}.
Then, for every $k \ge 1$, the following relations hold:
\begin{align}
    &Z^F_k \in \argmin_{U} \{ \ell_g(U;Z_k) : U \in \Delta^{n}\}; \label{Xkf FW transl}\\
    &\tilde Z_{k}=Z_k-\alpha_kD_k; \label{Z tilde transl}\\
    &\epsilon_k=G_k \bullet D_k;\label{epsilon FW transl}\\
&\alpha_k=\argmin_{\alpha \in [0,1]}g(Z_k-\alpha D_k)\} \label{alpha translated}\\
&g(Z_{k+1})\leq g(\tilde Z_k) \leq g(Z_k) \label{descent HLR}
\end{align}
where $\epsilon_k$ and $\alpha_{k}$ are as in \eqref{FW subproblem eq} and \eqref{stepsize Yk}, respectively. Moreover, for every $k \ge 1$, it holds that
\begin{align}\label{theta F epsilon}
    \theta_k \geq 0, \quad G_k+\theta_kI \succeq 0, \quad 
    G_k\bullet Z_k+\theta_k=\epsilon_k.
\end{align}
\end{prop}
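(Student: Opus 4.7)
The plan is to verify the six relations essentially by unpacking the definitions of the HLR iterates and invoking results already established in the excerpt.

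First, for \eqref{Xkf FW transl}, I would note that $\ell_g(U;Z_k) = g(Z_k) + G_k \bullet (U - Z_k)$, so the problem in \eqref{Xkf FW transl} reduces to minimizing the linear functional $U \mapsto G_k \bullet U$ over the spectraplex $\Delta^n$. Writing any $U \in \Delta^n$ in its spectral decomposition and using $\tr U \le 1$, $U \succeq 0$, one shows the optimal value equals $\min\{0,\lambda_{\min}(G_k)\} = -\theta_k$, and that this value is attained at $U = y_k y_k^T = Z_k^F$ according to the definition \eqref{FW subproblem eq} (splitting the cases $\theta_k > 0$ and $\theta_k = 0$). This is essentially Proposition~\ref{Normal Cone Characterization Appendix}(b) already invoked in Lemma~\ref{Normal Cone Delta Result}.

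Next, \eqref{Z tilde transl} is a direct algebraic check from \eqref{tilde Y}: when $\alpha_k < 1$, $\tilde Y_k \tilde Y_k^T = (1-\alpha_k) Y_k Y_k^T + \alpha_k y_k y_k^T = Z_k - \alpha_k(Z_k - Z_k^F) = Z_k - \alpha_k D_k$; when $\alpha_k = 1$, $\tilde Z_k = y_k y_k^T = Z_k^F = Z_k - D_k$. For \eqref{epsilon FW transl}, I would combine \eqref{FW subproblem eq-2} with the identity $G_k \bullet Z_k^F = y_k^T G_k y_k = -\theta_k$ (valid both when $\theta_k > 0$, where $y_k = v_{\min}(G_k)$, and when $\theta_k = 0$, where $y_k = 0$); then $\epsilon_k = G_k \bullet Z_k + \theta_k = G_k \bullet Z_k - G_k \bullet Z_k^F = G_k \bullet D_k$. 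Relation \eqref{alpha translated} is just a rewriting of \eqref{stepsize Yk} after substituting the definitions of $Z_k$, $Z_k^F$ and $D_k$.

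For \eqref{descent HLR}, the right inequality $g(\tilde Z_k) \le g(Z_k)$ follows from \eqref{alpha translated} by comparing the value at $\alpha_k$ with the value at $\alpha = 0$, which gives $g(Z_k)$. The left inequality $g(Z_{k+1}) \le g(\tilde Z_k)$ follows from the first remark after Proposition~\ref{New Main Result ADAP-AIPP} (namely $\tilde g(\overline W) \le \tilde g(\underline W)$) applied to the step~1 call that takes $\tilde Y_k$ as initial point and outputs $Y_{k+1}$; this yields $g(Z_{k+1}) = \tilde g(Y_{k+1}) \le \tilde g(\tilde Y_k) = g(\tilde Z_k)$. Finally, \eqref{theta F epsilon} follows by applying Lemma~\ref{Normal Cone Delta Result}(a) at $Z = Z_k$, together with the identity $\epsilon_k = G_k \bullet Z_k + \theta_k$ already established.

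The main obstacle is the characterization in \eqref{Xkf FW transl} of the FW vertex, which is what ties the definition of $(\theta_k, y_k)$ in \eqref{FW subproblem eq} to a genuine linear-minimization step over $\Delta^n$; once that is in hand, the remaining identities are bookkeeping, and the only nontrivial descent claim reduces to monotonicity of ADAP-AIPP that has already been recorded.
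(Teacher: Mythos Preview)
Your proposal is correct and follows essentially the same route as the paper: verify \eqref{Xkf FW transl} via the linear minimization over $\Delta^n$ (the paper cites Proposition~\ref{Pair Optimal FW subproblem} rather than Proposition~\ref{Normal Cone Characterization Appendix}, but the content is the same), establish \eqref{Theta K Bullet} by the same two-case analysis, and then read off \eqref{Z tilde transl}--\eqref{alpha translated} and \eqref{theta F epsilon} from the definitions. The only difference is that for the inequality $g(\tilde Z_k)\le g(Z_k)$ in \eqref{descent HLR} you argue directly by comparing $\alpha=\alpha_k$ with $\alpha=0$ in \eqref{alpha translated}, whereas the paper instead notes that $(Z_k,\tilde Z_k)$ are RFW iterates and invokes the descent relation \eqref{descent tildes} from Appendix~\ref{FW Appendix}; your argument is slightly more elementary, while the paper's phrasing has the side benefit of recording that HLR instantiates RFW, a fact reused in the proof of Theorem~\ref{thm:complexityLRHFW}.
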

\begin{proof}
Relation \eqref{Xkf FW transl} follows immediately from the way $y_k$ is computed in \eqref{FW subproblem eq-2}, the definitions of $Z_k$ and $Z^{F}_k$ in \eqref{Xk definitions}, and Proposition~\ref{Pair Optimal FW subproblem} with $G=G_k$ and $(Z^{F},\theta^{F})=(Z_k^{F},\theta_k)$. 

To show relation \eqref{epsilon FW transl}, it is first necessary to show that
\begin{equation}\label{Theta K Bullet}
\theta_k=-G_k \bullet Z_k^{F}.
\end{equation}
Consider two possible cases. For the first case, suppose that $\theta_k=0$, which in view of the definitions of $y_k$ and $Z^{F}_k$ in \eqref{FW subproblem eq} and \eqref{Xk definitions}, respectively implies that $Z_k^{F}=0$. Hence, relation \eqref{Theta K Bullet} immediately follows. For the other case suppose that $\theta_k>0$ and hence $(\theta_k,y_k)=(-\lambda_{\min}(G_k),v_{\min}(G_k))$. This observation and the fact that $(\lambda_{\min}(G_k), v_{\min}(G_k))$ is an eigenpair of $G_k$ imply that $G_ky_k \bullet y_k =-\theta_k$, which in view of the definition of $Z^{F}_k$ in \eqref{Xk definitions} immediately implies relation \eqref{Theta K Bullet}.

It is now easy to see that the definition of $\epsilon_k$ in \eqref{FW subproblem eq}, relation \eqref{Theta K Bullet}, the update rule for $\tilde Y_k$ in \eqref{tilde Y}, and the definitions of $Z_k$, $Z^{F}_k$, and $D_k$ in \eqref{Xk definitions} imply that
\begin{align*}
\tilde Z_k&=Z_k-\alpha_kD_k\\
\epsilon_k&=G_k \bullet (Z_k-Z^F_k)=G_k \bullet D_k\\
\alpha_k&=\argmin_{\alpha}\left\{g(Z_k-\alpha D_k): \alpha \in [0,1] \right\}\}
\end{align*}
and hence relations \eqref{Z tilde transl}, \eqref{epsilon FW transl}, and \eqref{alpha translated} follow.


Relation \eqref{iteration bound} and the fact that HLR during its $k$-th iteration calls ADAP-AIPP with initial point $\underline W=\tilde Y_{k-1}$ and outputs point $\overline W=Y_k$ imply that $\tilde g(Y_k) \leq \tilde g(\tilde Y_{k-1})$. This fact together with the definition of $\tilde g$ in \eqref{factorized problem} and the definitions of $Z_k$ and $\tilde Z_k$ in \eqref{Xk definitions} imply the first inequality in \eqref{descent HLR}. Now the first inequality in \eqref{descent HLR} and relations \eqref{Xkf FW transl}, \eqref{Z tilde transl}, \eqref{epsilon FW transl}, and \eqref{alpha translated} imply that 
$Z_k=Y_kY_k^{T}$ and $\tilde Z_k=\tilde Y_k\tilde Y_k^{T}$ can be viewed as iterates of the $k$-th iteration of the RFW method of Appendix~\ref{FW Appendix}. The second inequality in \eqref{descent HLR} is then an immediate consequence of this observation and the second relation in \eqref{descent tildes}.


The first two relations in \eqref{theta F epsilon} follow directly from the definitions of $G_k$ and $\theta_k$ in \eqref{gradient Def} and \eqref{FW subproblem eq}, respectively. The definitions of $\epsilon_k$ and $Z_k$ in \eqref{FW subproblem eq-2} and \eqref{Xk definitions}, respectively, immediately imply the third relation in \eqref{theta F epsilon}.
\end{proof}

Since step 1 of the HLR method consists of a call to the ADAP-AIPP method developed in Subsection~\ref{ADAP-AIPP Method}, the conclusion of Proposition~\ref{New Main Result ADAP-AIPP} applies to this step.
The following result,
which will be useful in the analysis of the HLR method,
translates the conclusion
of Proposition~\ref{New Main Result ADAP-AIPP} to the current setting.

\begin{proposition}\label{ADAP-AIPP result translation}
The following statements about step 1 of the $k$-th iteration of the HLR method hold: 
\begin{itemize}
\item[(a)]
the ADAP-AIPP call terminates with a pair $(Y_k,\mathcal R_{k})$ satisfying \begin{equation}\label{stationary solution-2}
    \mathcal R_k\in \nabla \tilde g(Y_k)+\partial \delta_{\bar B_1}(Y_k), \quad \|\mathcal R_k\|_{F}\leq \bar \rho,
\end{equation}
and the number $l_k$ of ADAP-AIPP iterations performed by the ADAP-AIPP call in step 1 satisfies
\begin{equation}\label{iteration bound-2}
1\leq l_k\leq{\cal T}_k := 1+\frac{C_{\sigma} \max\{8\bar G+144 L_g,1/\lambda_0\}}{\bar \rho^2} \left[\tilde g(\tilde Y_{k-1})-\tilde g(Y_k)\right]
\end{equation}
where $\lambda_0$ and $\bar \rho$ are given as input to the HLR method and $L_{g}$, $\tilde g$, $\bar G$, and $C_{\sigma}$ are as in \eqref{upper curvature g}, \eqref{factorized problem}, \eqref{phi* alpha0 quantities}, and \eqref{phi* alpha0 quantities-2} respectively;
\item[(b)] the number of ADAP-FISTA calls performed by the ADAP-AIPP call in step 1 is no more than ${\cal T}_k + \mathcal Q$ where
\begin{equation}\label{iteration bound-3}
\mathcal Q := \left\lceil \log_0^+\left(\lambda_0\max\{8\bar G+144 L_g, 1/\lambda_0\}\right)/\log 2 \right\rceil.
\end{equation}
\end{itemize}
\end{proposition}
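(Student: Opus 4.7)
The result is essentially a packaging of Proposition~\ref{New Main Result ADAP-AIPP} into the notation used by step~1 of the $k$-th iteration of HLR, followed by algebraic simplification of the constants $1/\underline \lambda$ and $\lambda_0/\underline \lambda$. The plan is therefore to identify the correspondence between the inputs of the ADAP-AIPP call inside HLR and the symbols used in the abstract statement of Proposition~\ref{New Main Result ADAP-AIPP}, invoke that proposition, and then expand the definitions of $\underline \lambda$ and $L_{\tilde g}$ from \eqref{phi* alpha0 quantities}--\eqref{phi* alpha0 quantities-2} to arrive at the stated bounds.

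For part~(a), I would begin by noting that the ADAP-AIPP call in step~1 is made with input $(g,\lambda_0,\tilde Y_{k-1},\bar\rho)$, so in the notation of Proposition~\ref{New Main Result ADAP-AIPP} the initial iterate is $\underline W = \tilde Y_{k-1}$ and the output pair $(\overline W,\overline R)$ is exactly $(Y_k,\mathcal R_k)$. Invoking Proposition~\ref{New Main Result ADAP-AIPP}(a) then immediately yields the approximate-stationarity conditions in \eqref{stationary solution-2} and the iteration bound $l_k \le 1 + C_\sigma[\tilde g(\tilde Y_{k-1})-\tilde g(Y_k)]/(\underline\lambda\,\bar\rho^{\,2})$. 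To match the form of \eqref{iteration bound-2}, I would then expand the factor $1/\underline\lambda$ using \eqref{phi* alpha0 quantities-2} to get $1/\underline\lambda = \max\{1/\lambda_0,\, 4L_{\tilde g}\}$, and substitute $L_{\tilde g} = 2\bar G + 36L_g$ from \eqref{phi* alpha0 quantities} to obtain $4L_{\tilde g} = 8\bar G + 144L_g$, yielding exactly the coefficient appearing in $\mathcal T_k$.

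For part~(b), I would apply Proposition~\ref{New Main Result ADAP-AIPP}(b), which bounds the number of ADAP-FISTA calls made by a single ADAP-AIPP invocation by $\mathcal T + \lceil \log_0^{+}(\lambda_0/\underline\lambda)/\log 2\rceil$. The key algebraic step is the identity
\[
\frac{\lambda_0}{\underline\lambda} \;=\; \frac{\lambda_0}{\min\{\lambda_0,\,1/(4L_{\tilde g})\}} \;=\; \max\{1,\,4\lambda_0 L_{\tilde g}\} \;=\; \lambda_0\,\max\{1/\lambda_0,\,8\bar G + 144 L_g\},
\]
which shows that the logarithmic term coincides exactly with $\mathcal Q$ as defined in \eqref{iteration bound-3}. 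Combining this with the bound on $l_k$ from part~(a) produces the stated bound $\mathcal T_k + \mathcal Q$.

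Because the result is a direct specialization, I do not anticipate any genuine obstacle beyond careful bookkeeping. The two points that require mild attention are the order of the input quadruple to ADAP-AIPP (to avoid confusing $\bar\rho$ with $\underline W$) and the fact that the nonconvex function on which ADAP-AIPP internally operates is the lifted function $\tilde g(U)=g(UU^T)$ from \eqref{factorized problem} rather than $g$ itself, so that the smoothness constant entering Proposition~\ref{New Main Result ADAP-AIPP} is $L_{\tilde g}$ and not $L_g$.
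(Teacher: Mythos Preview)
Your proposal is correct and follows essentially the same approach as the paper: identify $\underline W=\tilde Y_{k-1}$ and $(\overline W,\overline R)=(Y_k,\mathcal R_k)$, invoke Proposition~\ref{New Main Result ADAP-AIPP}, and then unwind $1/\underline\lambda$ and $\lambda_0/\underline\lambda$ using the definitions in \eqref{phi* alpha0 quantities}--\eqref{phi* alpha0 quantities-2}. The paper's proof is slightly terser, simply citing those definitions rather than writing out the algebraic identities $1/\underline\lambda=\max\{1/\lambda_0,4L_{\tilde g}\}$ and $\lambda_0/\underline\lambda=\lambda_0\max\{1/\lambda_0,4L_{\tilde g}\}$ that you correctly spell out.
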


\begin{proof}
(a) The first statement is immediate from relation \eqref{stationary solution} and the fact that ADAP-AIPP outputs pair $(Y_k, \mathcal R_k)=(\overline W,\overline R)$. To prove the second statement, suppose that the ADAP-AIPP call made in step 1 terminates after performing $l_k$ iterations. It follows immediately from Proposition~\ref{New Main Result ADAP-AIPP}(a) and the fact that the HLR method during its $k$-th iteration calls ADAP-AIPP with initial point $\underline W=\tilde Y_{k-1}$ and outputs point $\overline W=Y_k$
that $l_k$ satisfies
\begin{equation}\label{ADAP-AIPP iteration index}
1\leq l_k \overset{\eqref{iteration bound}}{\leq} 1+\frac{C_{\sigma}}{\underline \lambda \bar \rho^2} \left[\tilde g(\tilde Y_{k-1})-\tilde g(Y_k)\right]. 
\end{equation}
The result then follows from the definitions of $\underline \lambda$ and $L_{\tilde g}$ in \eqref{phi* alpha0 quantities-2} and \eqref{phi* alpha0 quantities}, respectively.

(b) The result follows immediately from (a), the fact that
the number of times $\lam$ is divided by $2$
in step 2 of ADAP-AIPP is at most $\lceil \log_0^+(\lambda_0/{\underline \lam})/\log 2 \rceil$, and the definitions of $\underline \lambda$ and $L_{\tilde g}$ in \eqref{phi* alpha0 quantities-2} and \eqref{phi* alpha0 quantities}, respectively.
\end{proof}

We are now ready to give the proof of Theorem~\ref{thm:complexityLRHFW}.
\begin{proof}[Proof of Theorem~\ref{thm:complexityLRHFW}]
(a) Consider the matrix $\bar Z=\bar Y\bar Y^{T}$ where $\bar Y$ is the output of the HLR method. The definition of $G_k$ in \eqref{gradient Def}, the fact that the definitions of $\theta_{k}$ and $\theta(\cdot)$ in \eqref{FW subproblem eq} and \eqref{theta bar x}, respectively, imply that $\theta_{k}=\theta(Y_kY_k^{T})$, relation \eqref{theta F epsilon}, and the stopping criterion \eqref{normal cone delta} in step 3 of the HLR method immediately imply that the pair $(\bar Z, \theta(\bar Z))$ satisfies relation \eqref{normal cone result} in Lemma~\ref{Normal Cone Delta Result}(b) with $\epsilon=\bar \epsilon$ and hence $\bar Z$ is an $\bar \epsilon$-optimal solution of \eqref{AL subproblem}. 
To show that the number of iterations that the HLR method performs to find such an $\bar \epsilon$-optimal solution is at most the quantity in \eqref{Hybrid FW Method Exact Complexity-Body}, observe that Proposition~\ref{FW Equivalences} establishes that the HLR method generates iterates $Y_k$ and $\tilde Y_k$ during its $k$-th iteration such that $Z_k=Y_kY_k^{T}$ and $\tilde Z_k=\tilde Y_k\tilde Y_k^{T}$ can be viewed as iterates of the $k$-th iteration of the RFW method of Appendix~\ref{FW Appendix}. The result then immediately follows from this observation, Theorem~\ref{FW Theorem}, and the fact that the diameter of $\Delta^{n}$ is at most 2.
 
(b) Suppose that the HLR method terminates at an iteration index $\bar K$. It follows from relations \eqref{iteration bound-3} and \eqref{Hybrid FW Method Exact Complexity-Body} and the definition of $\mathcal T_{k}$ in \eqref{iteration bound-2} that the HLR method performs at most
\begin{align}\label{Summable Complexity}
(1+\mathcal Q){\cal S}(\bar \epsilon)
+\frac{C_{\sigma}\max\{8\bar G+144 L_g,1/\lambda_0\}}{\bar \rho^2}\sum_{k=1}^{\bar K}\left[\tilde g(\tilde Y_{k-1})-\tilde g(Y_k)\right]
\end{align}
ADAP-FISTA calls. 
Now using the fact that $\tilde g(\tilde Y_{k})\leq \tilde g(Y_k)$, it is easy to see that the last term in \eqref{Summable Complexity} is summable:
\begin{align}\label{complexity derivation}
\sum_{k=1}^{\bar K}\tilde g(\tilde Y_{k-1})-\tilde g(Y_k)&=\tilde g(\tilde Y_{\bar K-1})-\tilde g(Y_{\bar K})+\sum_{k=1}^{\bar K-1}\tilde g(\tilde Y_{k-1})-\tilde g(Y_k) \nonumber\\
&\leq \tilde g(\tilde Y_{\bar K-1})-\tilde g(Y_{\bar K})+\sum_{k=1}^{\bar K-1}\tilde g(\tilde Y_{k-1})-\tilde g(\tilde Y_k)=\tilde g(\tilde Y_0)-\tilde g(Y_{\bar K}).
\end{align}
The result then follows from relations \eqref{Summable Complexity} and \eqref{complexity derivation}, the facts that $Y_{\bar K}=\bar Y$, $\bar Z=\bar Y\bar Y^{T}$, $\tilde Y_0=\bar Y_0$, $\bar Z_{0}=\bar Y_{0}\bar Y^{T}_{0}$, and the definition of $\tilde g$ in \eqref{factorized problem}.
\end{proof}

\section{\ourmethod}\label{HLR-AL Method}
This section presents
an inexact AL method for solving
the pair of primal-dual SDPs \eqref{eq:sdp-primal} and \eqref{eq:sdp-dual}, namely,
\ourmethod, whose outline is given in the introduction. It contains two subsections.
Subsection~\ref{Desc Our} formally states \ourmethod and
presents its main complexity result, namely Theorem~\ref{thm:OuterComplexitySDP}. 
Subsection~\ref{Pf Main Theorem} is devoted to the proof of Theorem~\ref{thm:OuterComplexitySDP}.

Throughout this section, it is assumed that
\eqref{eq:sdp-primal} and \eqref{eq:sdp-dual} have optimal solutions $X_{*}$ and $(p_{*},\theta_*)$,
respectively, and that both \eqref{eq:sdp-primal} and \eqref{eq:sdp-dual} have
the same optimal value.
It is well-known that such an assumption is
equivalent to the existence of
a triple $(X_{*},p_{*},\theta_{*})$ satisfying the optimality conditions:
\begin{align}
\text{(primal feasibility)} &\qquad
\mathcal A(X_*)-b=0,
\quad \tr(X_*)\leq 1,
\quad X_*  \succeq 0, \nonumber\\
\text{(dual feasibility)} &\qquad
S_* := C+\mathcal A^{*}p_* + \theta_* I \succeq 0,
\quad \theta_* \geq 0, \label{KKT system2}\\
\text{(complementarity)} &\qquad
\inner{X_*}{S_*} = 0, \quad \theta_* (1 - \tr X_*) = 0.\nonumber
\end{align}

This section studies the complexity of \ourmethod for finding an $(\epsilon_{\mathrm p},\epsilon_{\mathrm c})$-solution of \eqref{KKT system2}, i.e., a triple $(\bar X,\bar p,\bar \theta)$ that satisfies

\begin{align}
\text{($\epsilon_{\mathrm{p}}$-primal feasibility)} &\qquad
\|\mathcal A(\bar X)-b\| \leq \epsilon_{\mathrm{p}},
\quad \tr(\bar X)\leq 1,
\quad \bar X  \succeq 0, \nonumber\\
\text{(dual feasibility)} &\qquad
\bar S := C+\mathcal A^{*}\bar{p}+\bar\theta I \succeq 0,
\quad \bar\theta \geq 0,\label{Approx KKT}\\
\text{($\epsilon_{\mathrm{c}}$-complementarity)} &\qquad
\inner{\bar X}{S} + \bar\theta (1 - \tr \bar X) \le \epsilon_{\mathrm{c}}. \nonumber
\end{align}

\subsection{Description of \ourmethod and Main Theorem}\label{Desc Our}

The formal description of \ourmethod is presented next.

\noindent\begin{minipage}[t]{1\columnwidth}%
\rule[0.5ex]{1\columnwidth}{1pt}

\noindent \textbf{\ourmethod Method}

\noindent \rule[0.5ex]{1\columnwidth}{1pt}%
\end{minipage}

\noindent \textbf{Input}: Initial points $(U_0, p_0) \in \bar B^{s_0}_1 \times \RR^m$,
tolerance pair $(\epsilon_{\mathrm{c}}, \epsilon_{\mathrm{p}}) \in \R^2_{++}$,
penalty parameter $\beta\in \RR_{++}$,
and ADAP-AIPP parameters $(\bar \rho, \lambda_0) \in \RR_{++}^2$.

\noindent \textbf{Output}: 
 $(\bar X, \bar p, \bar \theta) \in \Delta^{n} \times \RR^m \times \RR_{+}$,
 an $(\epsilon_{\mathrm{p}},\epsilon_{\mathrm{c}})$-solution of \eqref{KKT system2}

\begin{itemize}
\item[{\bf 0.}]
set $t=1$ and
\begin{equation}\label{hat epsilon body}
\bar \epsilon = 
\min\{\epsilon_{\mathrm{c}},\;
{\epsilon_{\mathrm{p}}^2\beta}/{6} \};
\end{equation}

\item[{\bf 1.}]
call HLR with input
$(g, \bar Y_0, \lambda_0, \bar \epsilon, \bar \rho)=(\mathcal L_{\beta}(\cdot;p_{t-1}), U_{t-1}, \lambda_0, \bar \epsilon, \bar \rho)$
where $U_{t-1} \in \bar B^{s_{t-1}}_{1}$,
and let $U_t \in \bar B^{s_t}_{1}$ denote its output $\bar Y$;
\item[{\bf 2.}]
set
\begin{equation}\label{dual update SDP}
p_t=p_{t-1}+\beta (\mathcal A(U_t U_t^T)-b);
\end{equation}
\item[{\bf 3.}]
if $\|\mathcal{A}(U_t U_t^T) - b\| \leq \epsilon_{\mathrm{p}}$,
then set $T=t$ and \textbf{return} $(\bar X, \bar p, \bar\theta) = (U_T U_T^T, p_T, \tilde \theta(U_TU_T^{T};p_{T-1}))$ where $\tilde \theta(\cdot;\cdot)$ is as in \eqref{Theta Lagrangian};
\item[{\bf 4.}]
set $t = t+1$ and \textbf{go to} step \textbf{1.}
\end{itemize}
\noindent \rule[0.5ex]{1\columnwidth}{1pt}

Some remarks about each of the steps in \ourmethod are now given.
First, step 1 invokes HLR to obtain
an $\bar \epsilon$-optimal solution
$U_{t}U^{T}_{t}$
of subproblem \eqref{eq:AL}
using the previous
$U_{t-1} \in \bar B_1^{s_{t-1}}$ as initial point.
Second, step 2 updates the multiplier $p_t$ according to a full Lagrange multiplier update. Third, it is shown in Lemma~\ref{dual feasibility and complementarity} below that the triple $(U_tU_t^{T},p_t,\tilde \theta(U_tU_t^{T};p_{t-1}))$ always satisfies the dual feasibility and $\epsilon_{\mathrm{c}}$-complementarity conditions in \eqref{Approx KKT} where $\tilde \theta(\cdot;\cdot)$ is as in \eqref{Theta Lagrangian}.
Finally, step 3 checks if $U_{t}U^{T}_{t}$ is an $\epsilon_{\mathrm p}$-primal feasible solution. It then follows from the above remarks that if this condition is satisfied, then the triple $(U_tU^{T}_t,p_t,\tilde \theta(U_tU_t^{T};p_{t-1}))$ is an $(\epsilon_{\mathrm{p}},\epsilon_{\mathrm{c}})$-solution of \eqref{Approx KKT}.

\begin{lemma}\label{dual feasibility and complementarity}
For every iteration index $t$, the triple $(U_tU^{T}_t,p_t,\tilde \theta(U_tU_t^{T};p_{t-1}))$ satisfies the dual feasibility and $\epsilon_{\mathrm{c}}$-complementarity conditions in \eqref{Approx KKT} where $\tilde\theta(\cdot;\cdot)$ is as in \eqref{Theta Lagrangian}.
\end{lemma}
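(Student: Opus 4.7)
The plan is to unpack what HLR guarantees at step 1, translate it into a statement about the gradient of the augmented Lagrangian, and then match the three pieces against the definitions of dual feasibility and $\epsilon_{\mathrm c}$-complementarity.

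First, I would recall that step 1 of \ourmethod invokes HLR with $g(\cdot) = \mathcal L_\beta(\cdot; p_{t-1})$ and tolerance $\bar\epsilon$, so by the output guarantee of HLR, the matrix $Z := U_t U_t^T$ is a $\bar\epsilon$-optimal solution of \eqref{AL subproblem} in the sense of Definition~\ref{def:sdp-2}. Applying Lemma~\ref{Normal Cone Delta Result}(a)--(b) with this $g$ and $Z$, setting $\theta := \theta(Z) = \max\{-\lambda_{\min}(\nabla g(Z)), 0\}$, one gets both $\nabla g(Z) + \theta I \succeq 0$, $\theta \ge 0$, and the inequality $\nabla g(Z) \bullet Z + \theta \le \bar\epsilon$.

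Next, I would compute $\nabla g(Z)$ explicitly. From the form of $\mathcal L_\beta$ in \eqref{AL function} one has $\nabla \mathcal L_\beta(Z; p_{t-1}) = C + \mathcal A^*\bigl(p_{t-1} + \beta(\mathcal A Z - b)\bigr)$, which by the multiplier update \eqref{dual update SDP} equals $C + \mathcal A^* p_t$. Comparing with \eqref{Theta Lagrangian}, this identifies $\theta(Z) = \tilde\theta(U_t; p_{t-1}) =: \bar\theta$, and therefore $\bar S := C + \mathcal A^* p_t + \bar\theta I = \nabla g(Z) + \theta(Z) I$. The dual feasibility conditions $\bar S \succeq 0$ and $\bar\theta \ge 0$ in \eqref{Approx KKT} then follow immediately from Lemma~\ref{Normal Cone Delta Result}(a).

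Finally, for $\epsilon_{\mathrm c}$-complementarity, I would expand
\[
\inner{\bar X}{\bar S} + \bar\theta(1 - \tr \bar X) = Z \bullet (\nabla g(Z) + \bar\theta I) + \bar\theta(1 - \tr Z) = \nabla g(Z) \bullet Z + \bar\theta,
\]
since the terms $\bar\theta \tr Z$ cancel. By the $\bar\epsilon$-optimality inequality of the first paragraph this quantity is at most $\bar\epsilon$, and by the choice of $\bar\epsilon$ in \eqref{hat epsilon body} one has $\bar\epsilon \le \epsilon_{\mathrm c}$, giving the desired bound.

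There is no real obstacle here; the only subtlety is noticing that the Lagrange-multiplier update in \eqref{dual update SDP} is precisely what makes the gradient $\nabla \mathcal L_\beta(Z; p_{t-1})$ equal to $C + \mathcal A^* p_t$, so that the $\bar\epsilon$-optimality output of HLR for the AL subproblem translates directly into dual feasibility for the dual variable $p_t$ used in \eqref{Approx KKT}, with the same $\bar\theta$ playing both roles.
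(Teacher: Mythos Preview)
Your proof is correct and follows essentially the same approach as the paper: both compute $\nabla \mathcal L_\beta(U_tU_t^T;p_{t-1}) = C + \mathcal A^* p_t$ via the multiplier update \eqref{dual update SDP}, identify $\tilde\theta$ with the $\theta(Z)$ of Lemma~\ref{Normal Cone Delta Result}, read off dual feasibility from part~(a), and deduce $\epsilon_{\mathrm c}$-complementarity from the $\bar\epsilon$-optimality of the HLR output together with $\bar\epsilon\le\epsilon_{\mathrm c}$. The only cosmetic difference is that the paper routes the complementarity step through the inclusion form \eqref{epsilon approximate optimality} and then cites the equivalent expression \eqref{Comp Slackness}, whereas you expand $\inner{\bar X}{\bar S}+\bar\theta(1-\tr\bar X)$ directly; the algebra is identical.
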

\begin{proof}
The definitions of $q(\cdot;\cdot)$ and $\tilde \theta(\cdot;\cdot)$ in \eqref{Theta Lagrangian}, the second relation in \eqref{Computation of Gradient} with $Y=U_t$ and $p=p_{t-1}$, and the update rule for $p_t$ in \eqref{dual update SDP} imply that
\[
\nabla \mathcal L_{\beta}(U_tU^{T}_t;p_{t-1})=C+\mathcal A^{*}p_t, \quad \tilde \theta(U_tU_t^{T};p_{t-1})=\max\{-\lambda_{\min}(C+\mathcal A^{*}p_t), 0\}.
\]
It is then easy to see from the above relation that the triple $(U_tU^{T}_t,p_t,\tilde \theta(U_tU_t^{T};p_{t-1}))$ always satisfies the dual feasibility condition in \eqref{Approx KKT}.

The fact that the definition of $\bar \epsilon$ in \eqref{hat epsilon body} implies that $\bar \epsilon \leq \epsilon_{\mathrm c}$, the fact that $U_tU^{T}_t$ is an $\bar \epsilon$ solution of \eqref{eq:AL}, and the formula for $\nabla \mathcal L_{\beta}(U_tU^{T}_t;p_{t-1})$ above imply that
\begin{align*}
    0 \in C+\mathcal A^{*}p_t+ \partial_{\epsilon_\mathrm{c}} \delta_{\Delta^{n}}(U_tU^{T}_t).
\end{align*}
It then follows immediately from the above inclusion and relation \eqref{Comp Slackness} with $Z=U_tU_t^{T}$, $g=\mathcal L_{\beta}(\cdot;p_{t-1})$, and $\theta(\cdot)=\tilde \theta(\cdot;p_{t-1})$
that the triple $(U_tU^{T}_t,p_t,\tilde\theta(U_tU^{T}_t;p_{t-1}))$ always satisfies the $\epsilon_{\mathrm{c}}$-complementarity condition in \eqref{Approx KKT}.
\end{proof}

Before stating the complexity of \ourmethod,
the following quantities are first introduced:
\begin{align}
&\bar {\mathcal F}_{\beta}:=\frac{\beta}2 \|\mathcal AX_0-b\|^2+\frac{5\|p_{*}\|^2+\|p_{*}\|\sqrt{3\|p_{*}\|^2+2\beta\bar\epsilon}}{\beta}+3\bar \epsilon \label{g upper}\\
&\bar {\mathcal G}_{\beta}:=\|C\|_{F}+\|\mathcal A\|\left(9\beta\|\mathcal A\|+\sqrt{4\|p_{*}\|^2+2\beta\bar \epsilon} +\beta \|b\|\right)\label{grad upper}\\
&\bar \kappa_{\beta}:=\left\lceil \log_0^+\left(\lambda_0\max\{8\bar {\mathcal G}_{\beta}+144 \beta \|A\|^2, 1/\lambda_0\}\right)/\log 2 \right\rceil \label{K upper}
\end{align}
where $p_{*}$ is an optimal dual solution and $\bar \epsilon$ is as in \eqref{hat epsilon body}.

The main result of this paper is now stated.

\begin{thm}\label{thm:OuterComplexitySDP}
The following statements hold:
    \begin{itemize}
\item[a)]
 \ourmethod terminates with
 an $(\epsilon_{\mathrm{p}},\epsilon_{\mathrm{c}})$-solution $(\bar X,\bar p,\bar \theta)$ of the pair of SDPs \eqref{eq:sdp-primal} and \eqref{eq:sdp-dual}
 in at most
    \begin{equation}
\label{eq:Complexity-Result-Body} 
\mathcal J:=\left\lceil\frac{3\|p_{*}\|^2}{\beta^2\epsilon_{\mathrm{p}}^2}\right\rceil
\end{equation}
iterations where $p_{*}$ is an optimal solution to \eqref{eq:sdp-dual};
\item[(b)] \ourmethod performs at most 
\begin{equation}
    \mathcal J \cdot {\bar{\mathcal P}}_{\beta}(\epsilon_{\mathrm{p}},\epsilon_{\mathrm{c}})
\end{equation}
and 
\begin{equation}
\label{Total Complexity HLR-AL method}
\begin{aligned}
\mathcal J\left[(1+{\bar \kappa_{\beta}})\mathcal{\bar P}_{\beta}(\epsilon_{\mathrm{p}},\epsilon_{\mathrm{c}})\right]
&+\frac{C_{\sigma}\bar {\mathcal F}_{\beta}\max\{8\bar {\mathcal G}_{\beta}+144\beta \|A\|^2,1/\lambda_0\}}{\bar \rho^2}
\end{aligned}
\end{equation}
total HLR iterations (and hence MEV computations) and total ADAP-FISTA calls, respectively,
where 
\begin{equation}\label{Bar P}
\mathcal {\bar{P}}_{\beta}(\epsilon_{\mathrm{p}},\epsilon_{\mathrm{c}}):=\left\lceil 1+\frac{4\max\left\{\bar {\mathcal F}_{\beta},\sqrt{4\beta\|A\|^2\bar {\mathcal F}_{\beta}}, 4\beta \|A\|^2\right\}}{\min\{\epsilon_{\mathrm{c}},\;
{\epsilon_{\mathrm{p}}^2\beta}/{6} \}}\right\rceil,
\end{equation}
$\bar \rho$ is an input parameter to \ourmethod,
and $\bar {\mathcal F}_{\beta}$, $C_{\sigma}$, $\bar {\mathcal G}_{\beta}$, and ${\bar \kappa_{\beta}}$ are as in \eqref{g upper}, \eqref{phi* alpha0 quantities}, \eqref{grad upper}, and \eqref{K upper}, respectively.  
\end{itemize}
\end{thm}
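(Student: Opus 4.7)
My plan is to establish the two parts of Theorem~\ref{thm:OuterComplexitySDP} in sequence, adapting the standard inexact augmented-Lagrangian analysis to the $\bar\epsilon$-optimal subproblem solutions produced by HLR. For part~(a), Lemma~\ref{dual feasibility and complementarity} already guarantees that $(X_t, p_t, \tilde\theta(X_t;p_{t-1}))$ satisfies dual feasibility and $\epsilon_{\mathrm c}$-complementarity at every iteration (with $X_t:=U_tU_t^T$), so it suffices to bound the iteration count before the primal-feasibility test $\|\mathcal{A} X_t-b\|\le\epsilon_{\mathrm p}$ passes. The central step is that the multiplier update \eqref{dual update SDP} turns the $\bar\epsilon$-optimality of $X_t$ for \eqref{eq:AL} (via Lemma~\ref{Normal Cone Delta Result}(b) with $g=\mathcal{L}_\beta(\cdot;p_{t-1})$) into the inclusion $0\in C+\mathcal{A}^*p_t+\partial_{\bar\epsilon}\delta_{\Delta^n}(X_t)$. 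Pairing this inclusion against $X_*-X_t$, substituting the KKT identity $C+\mathcal{A}^*p_*+\theta_*I=S_*$ from \eqref{KKT system2}, and dropping the nonnegative terms $\langle S_*,X_t\rangle$ and $\theta_*(1-\tr X_t)$ (using $S_*\succeq 0$, $\theta_*\ge 0$, $\tr X_t\le 1$, and the complementarity $\theta_*(1-\tr X_*)=0$) yields $\langle p_t-p_*,\mathcal{A}X_t-b\rangle\le\bar\epsilon$. Combined with $p_t-p_{t-1}=\beta(\mathcal{A}X_t-b)$ and a polarization expansion of $\|p_t-p_*\|^2$, this gives the one-step descent
\[
\|p_t-p_*\|^2\le\|p_{t-1}-p_*\|^2-\beta^2\|\mathcal{A}X_t-b\|^2+2\beta\bar\epsilon.
\]
Telescoping, using $\|\mathcal{A}X_t-b\|>\epsilon_{\mathrm p}$ before termination, and invoking $2\beta\bar\epsilon\le\beta^2\epsilon_{\mathrm p}^2/3$ (guaranteed by \eqref{hat epsilon body}) recovers \eqref{eq:Complexity-Result-Body}.

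For part~(b), I invoke Theorem~\ref{thm:complexityLRHFW} on the $t$-th HLR call (applied to the $\beta\|\mathcal{A}\|^2$-smooth function $g_t:=\mathcal{L}_\beta(\cdot;p_{t-1})$) and sum over $t\le\mathcal J$. Statement~(a) of that theorem bounds the HLR iteration count per call by $\mathcal S(\bar\epsilon)$, and after verifying the uniform bound $g_t(X_{t-1})-g_{t,*}\le\bar{\mathcal F}_\beta$ (addressed below), it follows that $\mathcal S(\bar\epsilon)\le\bar{\mathcal P}_\beta(\epsilon_{\mathrm p},\epsilon_{\mathrm c})$, whence the $\mathcal J\cdot\bar{\mathcal P}_\beta$ bound on total HLR iterations. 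For the ADAP-FISTA count, statement~(b) of Theorem~\ref{thm:complexityLRHFW} gives a per-call bound $(1+\bar\kappa_\beta)\mathcal S(\bar\epsilon)+C_\sigma\max\{8\bar G+144L_g,1/\lambda_0\}\bar\rho^{-2}[g_t(X_{t-1})-g_t(X_t)]$, so the cumulative count reduces to telescoping the $g_t$-differences. The identity $g_t(X)-g_{t-1}(X)=\beta\langle\mathcal{A}X_{t-1}-b,\mathcal{A}X-b\rangle$, which follows directly from \eqref{AL function} and \eqref{dual update SDP}, specializes at $X=X_{t-1}$ to $g_t(X_{t-1})=g_{t-1}(X_{t-1})+\beta\|\mathcal{A}X_{t-1}-b\|^2$, so
\[
\sum_{t=1}^{T}\big[g_t(X_{t-1})-g_t(X_t)\big]=g_1(X_0)-g_T(X_T)+\beta\sum_{t=1}^{T-1}\|\mathcal{A}X_t-b\|^2,
\]
and each right-hand term will be controlled by the data and the descent from part~(a).

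The principal obstacle is producing the uniform upper bound $g_t(X_{t-1})-g_{t,*}\le\bar{\mathcal F}_\beta$, together with the matching lower bound on $g_T(X_T)$, carrying the exact constants of \eqref{g upper}. My approach is to derive an AL weak-duality inequality of the form $g_{t,*}\ge f_*-\tfrac{1}{2\beta}\|p_{t-1}-p_*\|^2-\mathrm{(slack)}$ by evaluating $\mathcal{L}_\beta(X;p_{t-1})$ at $X=X_*$, inserting $C+\mathcal{A}^*p_*=S_*-\theta_*I$, and exploiting $\langle S_*,X\rangle\ge 0$ on $\Delta^n$ together with $\theta_*(1-\tr X_*)=0$. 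The descent recursion from part~(a), telescoped from $p_0$ up to any pre-termination index, together with the fact that $\mathcal J\beta\bar\epsilon$ is of order $\|p_*\|^2$ (a consequence of \eqref{hat epsilon body} and the bound on $\mathcal J$), yields $\|p_{t-1}-p_*\|^2$ of order $\|p_*\|^2+\beta\bar\epsilon$; substituting this into the weak-duality bound produces the $5\|p_*\|^2/\beta$ and $\|p_*\|\sqrt{3\|p_*\|^2+2\beta\bar\epsilon}/\beta$ contributions inside $\bar{\mathcal F}_\beta$. The $(\beta/2)\|\mathcal{A}X_0-b\|^2$ term enters as the initial AL value $g_1(X_0)-f_*$, and the $3\bar\epsilon$ term absorbs the $\bar\epsilon$-inexactness from the subproblem solutions and the dropped slacks; the same weak-duality bound evaluated at $t=T$ lower-bounds $g_T(X_T)$, closing the telescope from the previous paragraph.
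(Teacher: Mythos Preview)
Your plan is correct and follows essentially the same route as the paper. Part~(a) is exactly the paper's argument (the paper packages it as a general AL framework in Appendix~\ref{s:ALgeneral} and then specializes, but the content is the one-step descent $\|p_t-p_*\|^2\le\|p_{t-1}-p_*\|^2-\beta^2\|\mathcal A X_t-b\|^2+2\beta\bar\epsilon$ you derive). For part~(b) the paper likewise specializes Theorem~\ref{thm:complexityLRHFW} per outer step, bounds $\mathcal L_\beta(X_{t-1};p_{t-1})-\min_X\mathcal L_\beta(X;p_{t-1})$ uniformly by $\bar{\mathcal F}_\beta$, and telescopes the per-call ADAP-FISTA contributions using the same identity $\mathcal L_\beta(X_{t-1};p_{t-1})=\mathcal L_\beta(X_{t-1};p_{t-2})+\beta\|\mathcal A X_{t-1}-b\|^2$ you wrote down.

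Two minor discrepancies are worth flagging. First, to recover the \emph{exact} constants in $\bar{\mathcal F}_\beta$ the paper does not use your weak-duality bound $g_{t,*}\ge f_*-\tfrac{1}{2\beta}\|p_{t-1}-p_*\|^2$; instead it centers at $\lambda_{\min}(C)$ and completes the square to get $\mathcal L_\beta(X;p_{t-1})\ge \lambda_{\min}(C)-\tfrac{1}{2\beta}\|p_{t-1}\|^2$, then plugs in the bound $\|p_{t-1}\|\le\|p_*\|+\sqrt{3\|p_*\|^2+2\beta\bar\epsilon}$ from Lemma~\ref{Lagrange Multipliers bounded}. This is what produces the $\|p_*\|\sqrt{3\|p_*\|^2+2\beta\bar\epsilon}/\beta$ cross term; your centering at $p_*$ gives an equally valid (and in fact slightly tighter) bound of the form $\tfrac{9}{2}\|p_*\|^2/\beta+3\bar\epsilon$, but not literally $\bar{\mathcal F}_\beta$. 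Second, your outline omits the uniform bound $\mathcal G_\beta^{(t)}\le\bar{\mathcal G}_\beta$ on $\sup_{U\in\bar B_3}\|\nabla\mathcal L_\beta(UU^T;p_{t-1})\|_F$ (the paper's Lemma~\ref{Lemma Grad Bound}), which is what allows you to replace the per-call quantities $\bar G$ and $\mathcal Q$ in Theorem~\ref{thm:complexityLRHFW}(b) by $\bar{\mathcal G}_\beta$ and $\bar\kappa_\beta$; this is a direct consequence of the multiplier bound you already have, but it should be stated.
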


Two remarks about Theorem~\ref{thm:OuterComplexitySDP} are now given. First, it follows from statement (a) that
\ourmethod performs $\mathcal O\left(1/(\beta^2\epsilon^2_p)\right)$ iterations. Second, statement (b) and the definitions of $\mathcal J$, $\mathcal{\bar P}_{\beta}(\epsilon_{\mathrm{p}},\epsilon_{\mathrm{c}})$, and $\bar {\mathcal F}_{\beta}$ in \eqref{eq:Complexity-Result-Body}, \eqref{Bar P}, and \eqref{g upper}, respectively imply that 
\ourmethod performs
\begin{equation}\label{Big O HLR iterations 1}
   \mathcal O\left(\frac{1}{\beta\epsilon^2_p\min\{\epsilon_{\mathrm{c}},\epsilon^2_p\beta\}}\right)
\end{equation}
and
\begin{equation}\label{Big O ADAP FISA 1}
   \mathcal O\left(\frac{1}{\beta\epsilon^2_p\min\{\epsilon_{\mathrm{c}},\epsilon^2_p\beta\}}+\frac{\beta^2}{\bar \rho^2}\right)
\end{equation}
total HLR iterations (and hence MEV computations) and ADAP-FISTA calls, respectively. In contrast to the case where $\beta=\mathcal O(1)$,
the result below shows that the bounds \eqref{Big O HLR iterations 1} and \eqref{Big O ADAP FISA 1} can be improved when
$\beta=\mathcal O(1/\epsilon_{\mathrm{p}})$.


\begin{corollary}
If $\beta=\mathcal O\left(1/\epsilon_{\mathrm{p}}\right)$ and $\bar \rho=\beta\min\{\epsilon_{\mathrm{c}},\epsilon^2_p\beta/6\}$, then 
\ourmethod performs at most
\begin{equation}\label{Big O FISTA}
\mathcal O\left(\frac{1}{\epsilon_{\mathrm p}^2}+\frac{1}{\epsilon_{\mathrm c}^2}\right)
\end{equation}
total HLR iterations (and hence MEV computations)
and
total ADAP-FISTA calls.
\end{corollary}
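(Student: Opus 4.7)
The plan is to substitute the two parameter choices directly into the already-established bounds \eqref{Big O HLR iterations 1} and \eqref{Big O ADAP FISA 1}, and then simplify using elementary inequalities. This is a bookkeeping exercise at the big-$\mathcal O$ level, so no new structural ideas are needed beyond what has already been developed in Theorem~\ref{thm:OuterComplexitySDP}.

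First, I would handle the HLR-iteration bound \eqref{Big O HLR iterations 1}. Plugging in $\beta=\Theta(1/\epsilon_{\mathrm p})$ gives $\beta\epsilon_{\mathrm p}^2=\Theta(\epsilon_{\mathrm p})$ and $\epsilon_{\mathrm p}^2\beta=\Theta(\epsilon_{\mathrm p})$, so
\[
\frac{1}{\beta\epsilon_{\mathrm p}^2\min\{\epsilon_{\mathrm c},\epsilon_{\mathrm p}^2\beta\}}
= \mathcal O\!\left(\frac{1}{\epsilon_{\mathrm p}\min\{\epsilon_{\mathrm c},\epsilon_{\mathrm p}\}}\right).
\]
A two-case split (whether $\epsilon_{\mathrm p}\le\epsilon_{\mathrm c}$ or not) or a one-line AM-GM argument based on $1/(\epsilon_{\mathrm p}\epsilon_{\mathrm c})\le \tfrac12(1/\epsilon_{\mathrm p}^2+1/\epsilon_{\mathrm c}^2)$ then yields $\mathcal O(1/\epsilon_{\mathrm p}^2+1/\epsilon_{\mathrm c}^2)$, which is the desired bound.

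Next, for the ADAP-FISTA count \eqref{Big O ADAP FISA 1}, the first term is already controlled by the previous step, so it suffices to bound the extra $\beta^2/\bar\rho^2$ contribution. By the choice $\bar\rho=\beta\min\{\epsilon_{\mathrm c},\epsilon_{\mathrm p}^2\beta/6\}$,
\[
\frac{\beta^2}{\bar\rho^2} = \frac{1}{\min\{\epsilon_{\mathrm c},\epsilon_{\mathrm p}^2\beta/6\}^2}.
\]
Substituting $\beta=\Theta(1/\epsilon_{\mathrm p})$ gives $\epsilon_{\mathrm p}^2\beta/6=\Theta(\epsilon_{\mathrm p})$, so the denominator is $\Theta(\min\{\epsilon_{\mathrm c},\epsilon_{\mathrm p}\}^2)$ and hence $\beta^2/\bar\rho^2=\mathcal O(\max\{1/\epsilon_{\mathrm c}^2,1/\epsilon_{\mathrm p}^2\})=\mathcal O(1/\epsilon_{\mathrm p}^2+1/\epsilon_{\mathrm c}^2)$. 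Adding the two pieces gives the claimed bound \eqref{Big O FISTA}.

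There is no real obstacle here; the only mildly delicate point is confirming that the hidden constants in the hypothesis $\beta=\mathcal O(1/\epsilon_{\mathrm p})$ are benign, i.e., do not interact badly with the $\min\{\cdot,\cdot\}$ terms. Since the bounds in \eqref{Big O HLR iterations 1} and \eqref{Big O ADAP FISA 1} are monotone in $\beta$ only through the listed combinations, the inequality $\beta\le c/\epsilon_{\mathrm p}$ for some absolute constant $c$ is enough to pass to $\Theta(1/\epsilon_{\mathrm p})$ throughout the analysis above. With that verified, the corollary follows.
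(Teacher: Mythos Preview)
Your approach is exactly the paper's: substitute the parameter choices $\beta=\mathcal O(1/\epsilon_{\mathrm p})$ and $\bar\rho=\beta\min\{\epsilon_{\mathrm c},\epsilon_{\mathrm p}^2\beta/6\}$ directly into the already-derived bounds \eqref{Big O HLR iterations 1} and \eqref{Big O ADAP FISA 1} and simplify. The paper's proof is a single sentence to this effect, whereas you spell out the case analysis and the $\beta^2/\bar\rho^2$ simplification; both implicitly read $\beta=\mathcal O(1/\epsilon_{\mathrm p})$ as $\beta=\Theta(1/\epsilon_{\mathrm p})$ (a lower bound on $\beta$ is needed for \eqref{Big O HLR iterations 1} to yield the claimed rate), so your closing remark about only needing $\beta\le c/\epsilon_{\mathrm p}$ is not quite right, but this is a wrinkle in the corollary's statement rather than in your argument.
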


\begin{proof}
The conclusion of the corollary immediately follows from relations \eqref{Big O HLR iterations 1} and \eqref{Big O ADAP FISA 1} together with the assumptions that $\beta=\mathcal O(1/\epsilon_{\mathrm{p}})$ and $\bar \rho=\beta\min\{\epsilon_{\mathrm{c}},\epsilon^2_p\beta/6\}$. 
\end{proof}

It can be shown that each ADAP-FISTA call performs at most
\begin{equation}
\mathcal O_1\left(\sqrt{2\left[1+\lambda_0(2\bar {\mathcal G}_{\beta}+36\beta\|\mathcal A\|^2)\right]}\log^+_1\left(1+\lambda_0\left[2\bar {\mathcal G}_{\beta}+36\beta\|\mathcal A\|^2\right]\right)\right) 
\end{equation}
iterations/resolvent evaluations.\footnote{A resolvent evaluation of $h$ is an evaluation of $(I+\gamma \partial h)^{-1}(\cdot)$ for some $\gamma>0$.}
This is an immediate consequence of Lemma~\ref{ADAP FISTA translated}(a) in Appendix~\ref{ADAP-AIPP Appendix Proof}, the definition of $L_{\tilde g}$ in \eqref{phi* alpha0 quantities}, the fact that $\mathcal L_{\beta}(X;p_{t-1})$ is $(\beta\|\mathcal A\|^2)$-smooth, and Lemma~\ref{Lemma Grad Bound} which is developed in the next subsection.

\subsection{Proof of Theorem~\ref{thm:OuterComplexitySDP}}\label{Pf Main Theorem}
Since \ourmethod calls the HLR method at every iteration,
the next proposition specializes Theorem~\ref{thm:complexityLRHFW},
which states the complexity of HLR,
to the specific case of SDPs.
Its statement uses the following quantities associated
with an iteration $t$ of
\ourmethod:
\begin{align}
&\mathcal F^{(t)}_{\beta}:=\mathcal L_{\beta}(X_{t-1},p_{t-1})-\min_{X\in \Delta^{n}} \mathcal L_{\beta}(X,p_{t-1}),\label{Gap Upper}\\
&{\mathcal G}_{\beta}^{(t)}:=\sup \{\|\nabla \mathcal L_{\beta}(UU^{T},p_{t-1})\|: U\in \bar B^{s_{t-1}}_{3}\}, \label{Gradient Upper Index}\\
&\kappa_{\beta}^{(t)}=\left\lceil \log_0^+\left(\lambda_0\max\{8{\mathcal G}_{\beta}^{(t)}+144 \beta \|A\|^2, 1/\lambda_0\}\right)/\log 2 \right\rceil,\label{Log Iteration Index}
\end{align}

\begin{proposition}\label{HLR translation}
The following statements about the HLR call in step 1 of the $t$-th iteration of the \ourmethod hold: 
\begin{itemize}
\item[(a)]
it outputs $U_t$ such that $X_t=U_tU^{T}_t$ is an $\bar \epsilon$-optimal solution of 
\begin{align*}\label{AL for Prop}
\min_{X \in \Delta^n } \mathcal{L}_{\beta}(X;p_{t-1})
\end{align*}
by performing at most
    \begin{equation}\label{Hybrid FW Method Exact Complexity Translation}
    {\mathcal P^{(t)}_{\beta}}(\bar \epsilon) := \left \lceil 1+\frac{4\max\left\{\mathcal F^{(t)}_{\beta},\sqrt{4\beta\|\mathcal A\|^2\mathcal F^{(t)}_{\beta}}, 4\beta\|\mathcal A\|^2\right\}}{\bar \epsilon} \right\rceil
    \end{equation}
iterations (and hence MEV computations) where $\mathcal F^{(t)}_{\beta}$ and $\bar \epsilon$ are as in \eqref{Gap Upper} and \eqref{hat epsilon body}, respectively;
\item[(b)] the total number of ADAP-FISTA calls within such call is no more than
\begin{align}\label{Total Complexity HLR Translation}
(1+\kappa_{\beta}^{(t)}){\mathcal P^{(t)}_{\beta}}(\bar \epsilon)
+\frac{C_{\sigma}\max\{8{\mathcal G}_{\beta}^{(t)}+144\beta\|\mathcal A\|^2,1/\lambda_0\}}{\bar \rho^2}\left[\mathcal L_{\beta}(X_{t-1},p_{t-1})-\mathcal L_{\beta}(X_{t},p_{t-1})\right]
\end{align}
where 
$\lambda_0$ is given as input to HLR, and  $C_{\sigma}$, ${\mathcal G}_{\beta}^{(t)}$, and $\kappa_{\beta}^{(t)}$ are as in \eqref{phi* alpha0 quantities-2}, \eqref{Gradient Upper Index}, and \eqref{Log Iteration Index}, respectively.
\end{itemize}
\end{proposition}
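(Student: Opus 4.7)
The plan is to apply Theorem~\ref{thm:complexityLRHFW} directly, instantiating all the generic quantities appearing there with their counterparts arising from the choice $g(\cdot) = \mathcal{L}_\beta(\cdot;p_{t-1})$ and the initial point $\bar Y_0 = U_{t-1}$ that is passed to HLR in step~1 of \ourmethod. Under this identification one has $\bar Z_0 = \bar Y_0\bar Y_0^T = U_{t-1}U_{t-1}^T = X_{t-1}$ and the output of HLR is $\bar Y = U_t$ with $\bar Z = U_tU_t^T = X_t$, so the qualitative conclusion of Theorem~\ref{thm:complexityLRHFW}(a) (that $\bar Z$ is an $\bar\epsilon$-optimal solution of \eqref{AL subproblem}) translates verbatim into the statement that $X_t$ is an $\bar\epsilon$-optimal solution of $\min_{X\in\Delta^n}\mathcal L_\beta(X;p_{t-1})$.

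The main task is then to compute each of the three data-dependent quantities that appear in the complexity bounds \eqref{Hybrid FW Method Exact Complexity-Body} and \eqref{Total Complexity HLR} of Theorem~\ref{thm:complexityLRHFW}. First, I would verify that $L_g = \beta\|\mathcal A\|^2$ is a valid Lipschitz constant for $\nabla\mathcal L_\beta(\cdot;p_{t-1})$ on $\Delta^n$: writing $\nabla\mathcal L_\beta(X;p_{t-1}) = C + \mathcal A^*(p_{t-1}+\beta(\mathcal AX-b))$ and differencing, the $C$ and linear-in-$p_{t-1}$ terms cancel and the remaining term contributes exactly $\beta\mathcal A^*\mathcal A$, whose operator norm is $\beta\|\mathcal A\|^2$. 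Second, the initial gap $g(\bar Z_0)-g_*$ is by definition precisely $\mathcal F_\beta^{(t)}$ of \eqref{Gap Upper}. Third, the supremum $\bar G$ of \eqref{phi* alpha0 quantities} becomes $\mathcal G_\beta^{(t)}$ of \eqref{Gradient Upper Index} after the substitution $g = \mathcal L_\beta(\cdot;p_{t-1})$.

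With these substitutions in hand, statement (a) follows by plugging $L_g \leftarrow \beta\|\mathcal A\|^2$ and $g(\bar Z_0)-g_* \leftarrow \mathcal F_\beta^{(t)}$ into the formula \eqref{Hybrid FW Method Exact Complexity-Body} for $\mathcal S(\bar\epsilon)$, which produces exactly \eqref{Hybrid FW Method Exact Complexity Translation}. Statement (b) is obtained analogously: substituting $\bar G \leftarrow \mathcal G_\beta^{(t)}$ and $L_g \leftarrow \beta\|\mathcal A\|^2$ into \eqref{phi* alpha0 quantities-2} and \eqref{iteration bound-3} makes $\mathcal Q$ collapse to $\kappa_\beta^{(t)}$ of \eqref{Log Iteration Index}, while $g(\bar Z_0)-g(\bar Z) = \mathcal L_\beta(X_{t-1};p_{t-1}) - \mathcal L_\beta(X_t;p_{t-1})$ supplies the last factor of \eqref{Total Complexity HLR Translation}.

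There is no real obstacle here; the only point requiring a little care is to be explicit that the supremum in \eqref{phi* alpha0 quantities} is finite in our setting (so that $\mathcal G_\beta^{(t)}<\infty$ and the bound \eqref{Total Complexity HLR Translation} is meaningful). This holds because $\nabla\mathcal L_\beta(UU^T;p_{t-1})$ is an affine function of $UU^T$ and the ball $\bar B_3^{s_{t-1}}$ is compact, so continuity yields the finite supremum. Once this is noted, both statements are immediate specializations of Theorem~\ref{thm:complexityLRHFW}.
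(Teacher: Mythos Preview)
Your proposal is correct and follows essentially the same approach as the paper: both proofs simply invoke Theorem~\ref{thm:complexityLRHFW} after identifying $g=\mathcal L_\beta(\cdot;p_{t-1})$, $\bar Y_0=U_{t-1}$, $L_g=\beta\|\mathcal A\|^2$, $g(\bar Z_0)-g_*=\mathcal F_\beta^{(t)}$, and $\bar G=\mathcal G_\beta^{(t)}$. Your version is slightly more explicit in justifying the Lipschitz constant and the finiteness of $\mathcal G_\beta^{(t)}$, but the substance is identical.
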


\begin{proof}
(a) Recall that each HLR call during the $t$-th iteration of \ourmethod is made with
$(g, \bar Y_0, \lambda_0, \bar \epsilon, \bar \rho)=(\mathcal L_{\beta}(\cdot;p_{t-1}), U_{t-1}, \lambda_0, \bar \epsilon, \bar \rho)$.
The result then immediately follows from Theorem~\ref{thm:complexityLRHFW}(a), the definition of $\mathcal F^{(t)}_{\beta}$ in \eqref{Gap Upper}, and the fact that $\mathcal L_{\beta}(X;p_{t-1})$ is $(\beta\|\mathcal A\|^2)$-smooth.

(b) The proof follows directly from  Theorem~\ref{thm:complexityLRHFW}(b), statement(a), the fact that $\mathcal L_{\beta}(\cdot;p_{t-1})$ is $(\beta\|\mathcal A\|^2)$-smooth,
and the definitions of ${\mathcal G}_{\beta}^{(t)}$ and $\kappa^{(t)}_{\beta}$ in \eqref{Gradient Upper Index} and \eqref{Log Iteration Index}, respectively.
\end{proof}

The following Lemma establishes key bounds which will be used later to bound quantities $\mathcal F^{(t)}_{\beta}$, $\mathcal G^{(t)}_{\beta}$, and $\kappa^{(t)}_{\beta}$ that appear in Proposition~\ref{HLR translation}.

\begin{lemma}\label{Lagrange Multipliers bounded}
The following relations hold:
\begin{align}
\max_{t \in \{0, \ldots T\}}\|p_t\|&\leq \|p_{*}\|+\sqrt{3\|p_{*}\|^2+2\beta\bar \epsilon},\label{bound on Lagrange multipliers-SDP}\\
\beta\sum_{t=1}^{T}\|\mathcal{A}X_t-b\|^2&\leq \frac{3\|p_{*}\|^2}{\beta}+2\bar \epsilon, \label{bound on feasibility-SDP}
\end{align}
where $T$ is the last iteration index of \ourmethod, $p_{*}$ is an optimal Lagrange multiplier, and $\bar \epsilon$ is as in \eqref{hat epsilon body}.
\end{lemma}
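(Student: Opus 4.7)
The plan is to combine an $\bar\epsilon$-optimality certificate for the AL subproblem with weak duality at an optimal dual pair $(p_*,\theta_*)$, telescope the resulting per-iteration inequality, and then invoke the termination rule of \ourmethod to absorb the accumulated $\bar\epsilon$ error.

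First, by step~1 of \ourmethod, $X_t:=U_tU_t^T$ is an $\bar\epsilon$-optimal solution of the AL subproblem with multiplier $p_{t-1}$. Lemma~\ref{Normal Cone Delta Result}(b) applied with $g=\mathcal L_\beta(\cdot;p_{t-1})$ gives $-\nabla\mathcal L_\beta(X_t;p_{t-1})\in N^{\bar\epsilon}_{\Delta^n}(X_t)$, and a direct computation together with the dual update \eqref{dual update SDP} yields $\nabla\mathcal L_\beta(X_t;p_{t-1})=C+\mathcal A^*p_t$. Testing this $\bar\epsilon$-subgradient inclusion at $X=X_*$ and using $\mathcal AX_*=b$ produces
\[
C\bullet X_t+p_t^T(\mathcal AX_t-b)\leq C\bullet X_*+\bar\epsilon.
\]

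Second, weak duality combined with strong duality (which holds under the standing assumption of this section) gives $\min_{X\in\Delta^n}[C\bullet X+p_*^T(\mathcal AX-b)]=-p_*^Tb-\theta_*=C\bullet X_*$, so $C\bullet X_t\geq C\bullet X_*-p_*^T(\mathcal AX_t-b)$. Subtracting this lower bound from the upper bound above, multiplying by $2\beta$, using $\beta(\mathcal AX_t-b)=p_t-p_{t-1}$ from \eqref{dual update SDP}, and applying the three-point identity $2(a-c)^T(a-b)=\|a-c\|^2-\|b-c\|^2+\|a-b\|^2$ with $(a,b,c)=(p_t,p_{t-1},p_*)$ produces the key one-step inequality
\[
\|p_t-p_*\|^2-\|p_{t-1}-p_*\|^2+\beta^2\|\mathcal AX_t-b\|^2\leq 2\beta\bar\epsilon.
\]

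Third, telescoping over $s=1,\ldots,t$ with the initialization $p_0=0$ yields, for every $t\leq T$,
\[
\|p_t-p_*\|^2+\beta^2\sum_{s=1}^t\|\mathcal AX_s-b\|^2\leq \|p_*\|^2+2\beta t\bar\epsilon.
\]
To control the cumulative error $2\beta t\bar\epsilon$, invoke the termination rule in step~3 of \ourmethod: $\|\mathcal AX_s-b\|>\epsilon_{\mathrm p}$ for all $s<T$, hence $\sum_{s=1}^t\|\mathcal AX_s-b\|^2\geq(t-1)\epsilon_{\mathrm p}^2$ whenever $t\leq T$. Combined with the telescoped inequality and the choice $\bar\epsilon\leq\beta\epsilon_{\mathrm p}^2/6$ from \eqref{hat epsilon body} (so $2\beta\bar\epsilon\leq\beta^2\epsilon_{\mathrm p}^2/3$), this yields an upper bound on $t$ of order $\|p_*\|^2/(\beta^2\epsilon_{\mathrm p}^2)$. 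Plugging this back absorbs $2\beta t\bar\epsilon$ into an expression of the form $c_1\|p_*\|^2+c_2\beta\bar\epsilon$ for explicit absolute constants, producing a uniform bound on $\|p_t-p_*\|^2$ (hence on $\|p_t\|$ by the triangle inequality) that matches \eqref{bound on Lagrange multipliers-SDP}, while rearranging the summation bound gives \eqref{bound on feasibility-SDP}.

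The only delicate point is to avoid any circular dependence on Theorem~\ref{thm:OuterComplexitySDP}(a): the bound on $t$ used above must be derived purely from the stopping criterion and the telescoped inequality, not imported from the theorem's complexity conclusion; verifying this independence is what makes the argument self-contained.
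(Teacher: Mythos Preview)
Your proposal is correct and follows essentially the same route as the paper. The paper's own proof simply observes that \ourmethod is an instance of the general AL framework of Appendix~\ref{s:ALgeneral} with $(\hat\epsilon_{\mathrm c},\hat\epsilon_{\mathrm d})=(\bar\epsilon,0)$ and $p_0=0$, and then quotes relations \eqref{bound on Lagrange multipliers} and \eqref{bound on feasibility} from Theorem~\ref{thm:OuterComplexity}(b); the proof of that theorem in the appendix carries out exactly the per-iteration inequality, telescoping, and self-contained bound on $T$ that you describe. Your concern about circularity is well placed and correctly handled: both you and the paper derive the bound on $t$ from the telescoped inequality and the stopping rule (this is part~(a) of Theorem~\ref{thm:OuterComplexity}), not from Theorem~\ref{thm:OuterComplexitySDP}.
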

\begin{proof}
It follows immediately from Proposition~\ref{HLR translation}(a) and the definition of $\bar \epsilon$-optimal solution that the HLR call made in step 1 of the $t$-th iteration of \ourmethod outputs $U_t$ such that $X_t=U_tU_t^{T}$ satisfies
\begin{equation}
    0 \in \nabla \mathcal L_{\beta}(X_t;p_{t-1})+\partial_{\bar \epsilon}\delta_{\Delta^{n}}.
\end{equation}
It is then easy to see that \ourmethod is an instance of the AL Framework in Appendix~\ref{s:ALgeneral} since the HLR method implements relation \eqref{residual} in the Blackbox AL with $(\hat \epsilon_{\mathrm{c}},\hat\epsilon_{\mathrm{d}})=(\bar \epsilon,0)$. The proof of relations \eqref{bound on Lagrange multipliers-SDP} and \eqref{bound on feasibility-SDP} now follows immediately from relations \eqref{bound on Lagrange multipliers} and \eqref{bound on feasibility} and the fact that $p_0=0$.
\end{proof}

\begin{lemma}
For every iteration index $t$ of \ourmethod, the following relations hold:
\begin{align}
&\mathcal F^{(t)}_{\beta}\leq \bar {\mathcal F}_{\beta} \label{Lagrange Gap 1st Relation}\\
&\sum_{l=1}^{t} \left[\mathcal L_{\beta}(X_{l-1},p_{l-1})-\mathcal L_{\beta}(X_{l},p_{l-1})\right] \leq \bar {\mathcal F}_{\beta}\label{Lagrange Summable}
\end{align}
where $X_{t}=U_{t}U^{T}_{t}$ and $\mathcal F^{(t)}_{\beta}$ and $\bar {\mathcal F}_{\beta}$ are as in \eqref{Gap Upper} and \eqref{g upper}, respectively.
\end{lemma}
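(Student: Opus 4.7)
The plan is to establish three core ingredients and then assemble them via a telescoping identity for the sum bound \eqref{Lagrange Summable} and a per-iteration argument for the gap bound \eqref{Lagrange Gap 1st Relation}. Let $v^* := C\bullet X_*$ denote the common primal-dual value. First, I would derive an \emph{upper bound from approximate optimality}: since $\mathcal A X_* = b$ and $X_* \in \Delta^n$, we have $\mathcal L_\beta(X_*, p) = v^*$ for every $p$, so the $\bar\epsilon$-optimality of $X_l$ for $\min_X \mathcal L_\beta(X, p_{l-1})$ (using the unnamed lemma right after Definition~\ref{def:sdp-2}) yields $\mathcal L_\beta(X_l, p_{l-1}) \le v^* + \bar\epsilon$. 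Second, a \emph{dual lower bound}: the KKT system \eqref{KKT system2} together with complementarity gives $C\bullet X + p_*^T(\mathcal A X - b) \ge v^* + \theta_*(1 - \tr X) \ge v^*$ for every $X \in \Delta^n$, and substituting $p = p_* + (p - p_*)$ into $\mathcal L_\beta(X,p)$ followed by Young's inequality produces $\min_{X \in \Delta^n}\mathcal L_\beta(X, p) \ge v^* - \|p - p_*\|^2/(2\beta)$. Third, the \emph{update identity} from \eqref{dual update SDP}: $\mathcal L_\beta(X_l, p_l) - \mathcal L_\beta(X_l, p_{l-1}) = \beta\|\mathcal A X_l - b\|^2$.

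For \eqref{Lagrange Gap 1st Relation} with $t \ge 2$, I would apply the update identity at $l = t-1$ together with the first ingredient to obtain $\mathcal L_\beta(X_{t-1}, p_{t-1}) \le v^* + \bar\epsilon + \beta\|\mathcal A X_{t-1} - b\|^2$, and combine with the dual lower bound at $p_{t-1}$ to produce
\[
\mathcal F^{(t)}_\beta \;\le\; \bar\epsilon + \beta\|\mathcal A X_{t-1} - b\|^2 + \frac{\|p_{t-1} - p_*\|^2}{2\beta}.
\]
The bound \eqref{bound on feasibility-SDP} controls $\beta\|\mathcal A X_{t-1} - b\|^2$ (by bounding the single term by the whole sum), and expanding $\|p_{t-1} - p_*\|^2 \le (\|p_*\| + \sqrt{3\|p_*\|^2 + 2\beta\bar\epsilon} + \|p_*\|)^2$ via \eqref{bound on Lagrange multipliers-SDP} yields the constants that appear in $\bar{\mathcal F}_\beta$. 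The case $t = 1$ reduces to a direct bound on $\mathcal L_\beta(X_0, 0) - \min_X \mathcal L_\beta(X, 0)$ via the dual lower bound at $p = 0$, where the $C \bullet X_0 - v^*$ contribution is controlled through $C\bullet X_0 \ge v^* - \|p_*\|\|\mathcal A X_0 - b\|$ (ingredient two at $X_0$) and Young's inequality, so that it is absorbed into the already-present $\tfrac{\beta}{2}\|\mathcal A X_0 - b\|^2$ and $\|p_*\|^2/\beta$ terms of $\bar{\mathcal F}_\beta$.

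For \eqref{Lagrange Summable}, the update identity lets the sum telescope:
\[
\sum_{l=1}^t\bigl[\mathcal L_\beta(X_{l-1}, p_{l-1}) - \mathcal L_\beta(X_l, p_{l-1})\bigr] \;=\; \mathcal L_\beta(X_0, p_0) - \mathcal L_\beta(X_t, p_t) + \beta\sum_{l=1}^t \|\mathcal A X_l - b\|^2.
\]
Using $p_0 = 0$ gives $\mathcal L_\beta(X_0, p_0) = C\bullet X_0 + \tfrac{\beta}{2}\|\mathcal A X_0 - b\|^2$; the dual lower bound at $p_t$ gives $-\mathcal L_\beta(X_t, p_t) \le -v^* + \|p_t - p_*\|^2/(2\beta)$; and \eqref{bound on feasibility-SDP} bounds the trailing sum by $3\|p_*\|^2/\beta + 2\bar\epsilon$. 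Assembling and using \eqref{bound on Lagrange multipliers-SDP} to bound $\|p_t - p_*\|^2/(2\beta)$ as in the previous paragraph produces $\bar{\mathcal F}_\beta$.

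The hard part is the careful bookkeeping of constants so as to match $\bar{\mathcal F}_\beta$ precisely, in particular the cross-term $\|p_*\|\sqrt{3\|p_*\|^2 + 2\beta\bar\epsilon}/\beta$ coming from the squared binomial expansion of $\|p_{t-1} - p_*\|^2$, and the absorption of $C\bullet X_0 - v^*$ into the $\tfrac{\beta}{2}\|\mathcal A X_0 - b\|^2$ and $\|p_*\|^2/\beta$ summands using only ingredient two (so that no $\|C\|_F$ or $\|S_*\|_F$ dependence, which is not present in $\bar{\mathcal F}_\beta$, appears). The rest of the argument is mechanical once the three core ingredients and the two auxiliary bounds \eqref{bound on Lagrange multipliers-SDP}–\eqref{bound on feasibility-SDP} are in hand.
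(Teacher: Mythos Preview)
Your proposal has a genuine gap in the $t=1$ case and in the sum bound \eqref{Lagrange Summable}. In both places you must bound $C\bullet X_0 - v^*$ \emph{from above}, but the inequality you invoke, $C\bullet X_0 \ge v^* - \|p_*\|\,\|\mathcal A X_0 - b\|$, is a \emph{lower} bound on $C\bullet X_0$ (it is exactly what ingredient two gives, since dual feasibility yields $C\bullet X + p_*^T(\mathcal A X - b) \ge v^*$). It cannot be flipped: for an arbitrary $X_0\in\Delta^n$ the quantity $C\bullet X_0 - v^*$ is not controlled by $\|p_*\|$, $\beta$, $\bar\epsilon$, or $\|\mathcal A X_0 - b\|$ alone. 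The paper resolves this by taking $X_0=\argmin_{X\in\Delta^n} C\bullet X$, so that $C\bullet X_0=\lambda_{\min}(C)$, and then using $\lambda_{\min}(C)$ (not $v^*$) as the common reference point for both the upper bound $\mathcal L_\beta(X_{t-1},p_{t-1})\le\lambda_{\min}(C)+\tfrac\beta2\|\mathcal A X_0-b\|^2+3\|p_*\|^2/\beta+2\bar\epsilon$ and the lower bound $\mathcal L_\beta(X,p_t)\ge \lambda_{\min}(C)-\|p_t\|^2/(2\beta)$, whereupon the reference cancels.

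Even setting this aside, your bookkeeping does not recover $\bar{\mathcal F}_\beta$. For $t\ge 2$ your route gives $\mathcal F^{(t)}_\beta\le \bar\epsilon+\beta\|\mathcal A X_{t-1}-b\|^2+\|p_{t-1}-p_*\|^2/(2\beta)$, and bounding $\|p_{t-1}-p_*\|\le 2\|p_*\|+\sqrt{3\|p_*\|^2+2\beta\bar\epsilon}$ via triangle inequality and \eqref{bound on Lagrange multipliers-SDP} yields $\tfrac{13}{2}\|p_*\|^2/\beta+2\|p_*\|\sqrt{\cdot}/\beta+4\bar\epsilon$, which is strictly larger than $\bar{\mathcal F}_\beta$ whenever $\tfrac\beta2\|\mathcal A X_0-b\|^2$ is small. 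The paper avoids this by (i) using the HLR descent relation \eqref{descent HLR} to telescope the upper bound all the way back to $\mathcal L_\beta(X_0,p_0)$ rather than stopping after one step, and (ii) bounding $\|p_t\|^2$ directly rather than $\|p_t-p_*\|^2$, which saves the extra cross-term. Your use of the single-step $\bar\epsilon$-optimality bound for $t\ge 2$ is a legitimate alternative idea and gives a valid bound of the same order, but it does not produce the stated constant $\bar{\mathcal F}_\beta$.
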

\begin{proof}
Let $t$ be an iteration index. We first show that
\begin{equation}\label{Upper Bound Lagrangian-2}
\mathcal L_{\beta}(X_{t-1},p_{t-1})\leq \lambda_{\min}(C)+\frac{\beta}2 \|\mathcal A X_0-b\|^2+\frac{3\|p_{*}\|^2}{\beta}+2\bar \epsilon.
\end{equation}
holds.
It follows immediately from the definition of $\mathcal L_{\beta}(X;p)$ in \eqref{AL function}, the fact that $X_0=\argmin_{X\in \Delta^{n}}C\bullet X$, the Cauchy-Schwarz inequality, and the fact that $p_0=0$, that
\begin{align}\label{Lagrangian at 0 Unified}
\mathcal L_{\beta}(X_0,p_0)&\overset{\eqref{AL function}}{\leq}C\bullet X_0+\frac{\beta}{2}\|\mathcal A(X_0)-b\|^2=\lambda_{\min}(C)+\frac{\beta}{2}\|\mathcal A(X_0)-b\|^2.
\end{align}
Hence, relation \eqref{Upper Bound Lagrangian-2} holds with $t=1$.
Suppose now that $t\geq 2$ and let $l$ be an iteration index such that $l<t$. 
Relation \eqref{descent HLR}, the fact that \ourmethod during its $l$-th iteration calls the HLR method with input $g=\mathcal L_{\beta}(X,p_{l-1})$ and initial point $\bar Y_0=U_{l-1}$, and the fact that $X_l=U_lU^{T}_l$ imply that $\mathcal L_{\beta}(X_l,p_{l-1})\leq \mathcal L_{\beta}(X_{l-1},p_{l-1})$
and hence that
\begin{align}\label{successive lagrangian-2}
    \mathcal L_{\beta}(X_l,p_l)-\mathcal L_{\beta}(X_{l-1},p_{l-1})\leq \mathcal L_{\beta}(X_l,p_l)-\mathcal L_{\beta}(X_{l},p_{l-1})\overset{\eqref{dual update SDP}, \eqref{AL function}}{=} \beta \|\mathcal A(X_l)-b\|^2
\end{align}
in view of the update rule \eqref{dual update SDP} and the definition of $\mathcal L_{\beta}(\cdot;\cdot)$ in \eqref{AL function}.
Summing relation \eqref{successive lagrangian-2} from $l=1$ to $t-1$ and using relation \eqref{bound on feasibility-SDP} gives
\begin{align}
    \mathcal L_{\beta}(X_{t-1},p_{t-1})-\mathcal L_{\beta}(X_0,p_0) \overset{\eqref{successive lagrangian-2}}{\leq}\beta \sum_{l=1}^{t-1}\|\mathcal A(X_l)-b\|^2 \overset{\eqref{bound on feasibility-SDP}}{\leq} \frac{3\|p_{*}\|^2}{\beta}+2\bar \epsilon.\label{sum Lagrange Unification}
\end{align}
Relation \eqref{Upper Bound Lagrangian-2} now follows by combining relations \eqref{Lagrangian at 0 Unified} and \eqref{sum Lagrange Unification}.

Now, relation \eqref{bound on Lagrange multipliers-SDP}, the fact that $\min_{X \in \Delta^{n}}C\bullet X=\lambda_{\min}(C)$, and the definition of $\mathcal L_{\beta}(\cdot;\cdot)$ in \eqref{AL function},
imply that
for any $t=0,\ldots,T$,
\begin{align}
 \mathcal L_{\beta}(X,p_{t}) - \lambda_{\min}(C)
    &\geq \mathcal L_{\beta}(X,p_{t}) - C\bullet X
    = \frac{1}{2}\left\|\frac{p_{t}}{\sqrt{\beta}}+\sqrt{\beta}(AX-b)\right\|^2-\frac{\|p_{t}\|^2}{2\beta} \nonumber\\
    &\overset{\eqref{bound on Lagrange multipliers-SDP}}{\geq} -\frac{2\|p_{*}\|^2+\beta\bar\epsilon+\|p_{*}\|\sqrt{3\|p_{*}\|^2+2\beta\bar\epsilon}}{\beta} \qquad \forall X \in \Delta^n\label{lower b}
\end{align}
where $T$ is the last iteration index of \ourmethod. Relations \eqref{Upper Bound Lagrangian-2} and \eqref{lower b} together with the definition of $\bar {\mathcal F}_{\beta}$ in \eqref{g upper} then imply that
$\mathcal L_{\beta}(X_{t-1},p_{t-1})- \mathcal L_{\beta}(X,p_{t-1})\leq \bar {\mathcal F}_{\beta}$ for every
$X \in \Delta^n$ and iteration index $t$.
Relation \eqref{Lagrange Gap 1st Relation} then follows immediately from this conclusion and the definition of $\mathcal F^{(t)}_{\beta}$ in \eqref{Gap Upper}.

To show relation \eqref{Lagrange Summable}, observe that relations \eqref{successive lagrangian-2} and \eqref{bound on feasibility-SDP} imply that for any iteration index $t$ the following relations hold: 
\begin{align*}
&\sum_{l=1}^{t} \mathcal L_{\beta}(X_{l-1},p_{l-1})-\mathcal L_{\beta}(X_{l},p_{l-1})\\ 
&=\sum_{l=1}^{t} \left[\mathcal L_{\beta}(X_{l-1},p_{l-1})-\mathcal L_{\beta}(X_{l},p_{l})\right] + \sum_{l=1}^{t} \left[\mathcal L_{\beta}(X_{l},p_{l})-\mathcal L_{\beta}(X_{l},p_{l-1})\right]\\
&\overset{\eqref{successive lagrangian-2}}{=}\mathcal L_{\beta}(X_0,p_0)-\mathcal L_{\beta}(X_t,p_t)+\beta \sum_{l=1}^{t}\|\mathcal A(X_l)-b\|^2 \overset{\eqref{bound on feasibility-SDP}}{\leq} \mathcal L_{\beta}(X_0,p_0)-\mathcal L_{\beta}(X_t,p_t)+\frac{3\|p_{*}\|^2}{\beta}+2\bar \epsilon.
\end{align*}
Relation \eqref{Lagrange Summable} then follows immediately from the above relation, relation \eqref{lower b} with $X=X_t$, relation \eqref{Lagrangian at 0 Unified}, and the definition of  $\bar {\mathcal F}_{\beta}$ in \eqref{g upper}.
\end{proof}

\begin{lemma}\label{Lemma Grad Bound}
For every iteration $t$ of \ourmethod, we have
${\mathcal G}_{\beta}^{(t)}\leq \bar {\mathcal G}_{\beta}$
where ${\mathcal G}_{\beta}^{(t)}$ and $\bar {\mathcal G}_{\beta}$ are as in \eqref{Gradient Upper Index} and \eqref{grad upper}, respectively.
\end{lemma}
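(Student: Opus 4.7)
The plan is a straightforward triangle-inequality calculation using the explicit formula for $\nabla \mathcal{L}_\beta$ together with the a priori bound on $\|p_{t-1}\|$ coming from Lemma~\ref{Lagrange Multipliers bounded}.

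First, I would recall from \eqref{Computation of Gradient} and \eqref{Theta Lagrangian} that
\begin{equation*}
\nabla \mathcal{L}_\beta(UU^T;p_{t-1}) \;=\; C + \mathcal{A}^*\bigl(p_{t-1} + \beta(\mathcal{A}(UU^T)-b)\bigr),
\end{equation*}
so that the triangle inequality and the trivial bound $\|\mathcal{A}^*\|\le\|\mathcal{A}\|$ yield, for any $U\in\RR^{n\times s_{t-1}}$,
\begin{equation*}
\bigl\|\nabla \mathcal{L}_\beta(UU^T;p_{t-1})\bigr\|_F
\;\le\;
\|C\|_F + \|\mathcal{A}\|\bigl(\|p_{t-1}\| + \beta\|\mathcal{A}(UU^T)\| + \beta\|b\|\bigr).
\end{equation*}

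Next, I would exploit the constraint $U\in\bar B_3^{s_{t-1}}$, i.e. $\|U\|_F\le 3$, to derive
\begin{equation*}
\|UU^T\|_F \;\le\; \|U\|_F\,\|U\|_{\mathrm{op}} \;\le\; \|U\|_F^2 \;\le\; 9,
\end{equation*}
whence $\|\mathcal{A}(UU^T)\|\le 9\|\mathcal{A}\|$; this accounts for the $9\beta\|\mathcal{A}\|^2$ contribution in $\bar{\mathcal{G}}_\beta$ once the outer $\|\mathcal{A}\|$ is factored. The $\beta\|\mathcal{A}\|\,\|b\|$ term is already in the desired form.

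The remaining piece is to bound $\|p_{t-1}\|$. Here I would simply invoke \eqref{bound on Lagrange multipliers-SDP} from Lemma~\ref{Lagrange Multipliers bounded}, which gives $\|p_{t-1}\|\le \|p_*\|+\sqrt{3\|p_*\|^2+2\beta\bar\epsilon}$, and then absorb this into the quantity $\sqrt{4\|p_*\|^2+2\beta\bar\epsilon}$ that appears inside $\bar{\mathcal{G}}_\beta$ (using, e.g., $\|p_*\|+\sqrt{3\|p_*\|^2+2\beta\bar\epsilon}\le \sqrt{2}\,\sqrt{4\|p_*\|^2+2\beta\bar\epsilon}$ via $(a+b)^2\le 2(a^2+b^2)$, adjusting constants as needed). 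Combining the three bounds and taking the supremum over $U\in\bar B_3^{s_{t-1}}$ produces $\mathcal{G}_\beta^{(t)}\le\bar{\mathcal{G}}_\beta$ in the form \eqref{grad upper}.

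The only non-mechanical step is the last one: choosing the cleanest way to majorize $\|p_*\|+\sqrt{3\|p_*\|^2+2\beta\bar\epsilon}$ by the single radical $\sqrt{4\|p_*\|^2+2\beta\bar\epsilon}$ (or by the quantity actually used in the definition of $\bar{\mathcal{G}}_\beta$). This is a routine elementary inequality, but it is the one place where a careless bound would introduce extra constants that would not match \eqref{grad upper}, so it deserves a line or two of care. Everything else is triangle inequality and direct substitution.
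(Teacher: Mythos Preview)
Your approach is correct and essentially identical to the paper's: write out $\nabla\mathcal{L}_\beta(UU^T;p_{t-1})$ explicitly, apply the triangle inequality, use $\|UU^T\|_F\le 9$ for $U\in\bar B_3^{s_{t-1}}$, and plug in the bound \eqref{bound on Lagrange multipliers-SDP} on $\|p_{t-1}\|$. You are right that the only non-mechanical point is matching the final constant to \eqref{grad upper}; the paper carries out exactly these steps and simply asserts that last inequality without further comment.
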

\begin{proof}
Let $t$ be an iteration index of \ourmethod and suppose that $U\in \bar B^{s_{t-1}}_{3}$. It is easy to see from the definition of $\mathcal L_{\beta}(\cdot;\cdot)$ in \eqref{AL function} that
\begin{align}\label{def of gradient Lagrangian}
    \nabla \mathcal L_{\beta}(UU^{T};p_{t-1})=C+\mathcal A^{*}p_{t-1}+\beta \mathcal A^{*}(\mathcal A(UU^{T})-b).
\end{align}
It then follows from the fact that $U \in \bar B^{s_{t-1}}_{3}$, Cauchy-Schwarz inequality, triangle inequality, relation \eqref{def of gradient Lagrangian}, and bound \eqref{bound on Lagrange multipliers-SDP} that
\begin{align*}
\|\nabla \mathcal L_{\beta}(UU^{T};p_{t-1})\|_{F} &\overset{\eqref{def of gradient Lagrangian}}{=}\|C+\mathcal A^{*}p_{t-1}+\beta \mathcal A^{*}(\mathcal A(UU^{T})-b)\|_{F}\\
&\leq \|C\|_{F}+\|\mathcal A\|\|p_{t-1}\|+\beta\|\mathcal A\|^2\|UU^{T}\|_{F}+\beta\|\mathcal A\|\,\|b\|\\
&\leq \|C\|_{F}+\|\mathcal A\|\|p_{t-1}\|+9\beta\|\mathcal A\|^2+\beta\|\mathcal A\|\,\|b\|\\
&\overset{\eqref{bound on Lagrange multipliers-SDP}}{\leq} \|C\|_{F}+\|\mathcal A\|\left(9\beta \|\mathcal A\|+\|p_{*}\|+\sqrt{3\|p_{*}\|^2+2\beta\bar \epsilon} +\beta \|b\|\right)
\end{align*}
which immediately implies the result of the lemma in view of the definitions of ${\mathcal G}_{\beta}^{(t)}$ and $\bar {\mathcal G}_{\beta}$ in \eqref{Gradient Upper Index} and \eqref{grad upper}, respectively.
\end{proof}

We are now ready to prove Theorem~\ref{thm:OuterComplexitySDP}.

\begin{proof}[Proof of Theorem~\ref{thm:OuterComplexitySDP}]
(a) It follows immediately from Lemma~\ref{dual feasibility and complementarity} that output $(\bar X, \bar p, \bar \theta)$ satisfies the the dual feasibility and $\epsilon_{\mathrm{c}}$-complementarity conditions in \eqref{Approx KKT}. It then remains to show that the triple $(\bar X, \bar p, \bar \theta)$ satisfies the $\epsilon_{\mathrm p}$-primal feasibility condition in \eqref{Approx KKT} in at most $\mathcal J$ iterations where $\mathcal J$ is as in \eqref{eq:Complexity-Result-Body}. To show this, it suffices to show that \ourmethod
is an instance of the AL framework analyzed in Appendix~\ref{s:ALgeneral}. Observe first that \eqref{eq:sdp-primal} is a special case of
\eqref{eq:general_probl} with $f(X) = C \bullet X$
and $h(X) = \delta_{\Delta^n}(X)$. It is also easy to see that
the call to the HLR  in step 1 of
\ourmethod is a special way of implementing step 1 of
the AL framework, i.e.,
the Blackbox~AL.  Indeed, Proposition~\ref{HLR translation}(a)
implies that the output $U_t$ of HLR satisfies
$0 \in \nabla \mathcal L_{\beta}(U_tU^{T}_t;p_{t-1})+\partial_{\bar \epsilon} \delta_{\Delta^{n}}(U_t U_t^T)$ and hence HLR implements relation \eqref{residual} in the Blackbox AL with $(\hat X,\hat R)=(U_tU_t^T,0)$, $(\hat \epsilon_{\mathrm{c}},\hat\epsilon_{\mathrm{d}})=(\bar \epsilon,0)$, $g=\mathcal L_{\beta}(\cdot;p_{t-1})$, and $h=\delta_{\Delta^{n}}(\cdot)$.


In view of the facts that \ourmethod
is an instance of the AL framework and $p_0=0$, it then follows immediately from
Theorem~\ref{thm:OuterComplexity} that
\ourmethod terminates within the number of iterations in~\eqref{eq:Complexity-Result-Body} and that
the output $(\bar X, \bar p, \bar \theta)$ satisfies the $\epsilon_{\mathrm p}$-primal feasibility condition in \eqref{Approx KKT}.

(b) Consider the quantity $\mathcal P^{(t)}_{\beta}(\bar \epsilon)$ as in \eqref{Hybrid FW Method Exact Complexity Translation} where $t$ is an iteration index of \ourmethod. It follows immediately from relations \eqref{hat epsilon body} and \eqref{Lagrange Gap 1st Relation} and the definition of $\mathcal{\bar{P}}_{\beta}(\epsilon_{\mathrm{p}},\epsilon_{\mathrm{c}})$ in \eqref{Bar P} that $\mathcal P^{(t)}_{\beta}(\bar \epsilon)\leq \mathcal{\bar{P}}_{\beta}(\epsilon_{\mathrm{p}},\epsilon_{\mathrm{c}})$. Hence, it follows from Proposition~\ref{HLR translation}(a) that each HLR call made in step 1 of \ourmethod performs at most $\mathcal{\bar{P}}_{\beta}(\epsilon_{\mathrm{p}},\epsilon_{\mathrm{c}})$ iterations/MEV computations. The result then follows from this conclusion and part (a).

(c) Let $t$ be an iteration index of \ourmethod. Lemma~\ref{Lemma Grad Bound} implies that ${\mathcal G}_{\beta}^{(t)}\leq \bar {\mathcal G}_{\beta}$ and $\kappa^{(t)}_{\beta} \leq {\bar \kappa_{\beta}}$ in view of the definitions of $\kappa^{(t)}_{\beta}$ and ${\bar \kappa_{\beta}}$ in \eqref{Log Iteration Index} and \eqref{K upper}, respectively. Hence, it follows from this conclusion, the fact that $\mathcal P^{(t)}_{\beta}(\bar \epsilon)\leq \mathcal{\bar{P}}_{\beta}(\epsilon_{\mathrm{p}},\epsilon_{\mathrm{c}})$, Proposition~\ref{HLR translation}(b), and part (a) that the total number of ADAP-FISTA calls performed by \ourmethod is at most
\begin{align*}\label{Total Complexity HLR Bounded}
\mathcal J \left[(1+{\bar \kappa_{\beta}})\right]\mathcal{\bar{P}}_{\beta}(\epsilon_{\mathrm{p}},\epsilon_{\mathrm{c}})
+\frac{C_{\sigma}\max\{8 \bar {\mathcal G}_{\beta}+144\beta\|\mathcal A\|^2,1/\lambda_0\}}{\bar \rho^2}\left[\sum_{t=1}^{T}\mathcal L_{\beta}(X_{t-1},p_{t-1})-\mathcal L_{\beta}(X_{t},p_{t-1})\right]
\end{align*}
where $\mathcal J$ is as in \eqref{eq:Complexity-Result-Body} and $T$ is the last iteration index of \ourmethod. The result in (c) then follows immediately from the above relation together with the fact that relation \eqref{Lagrange Summable} implies that $\sum_{t=1}^{T} \left[\mathcal L_{\beta}(X_{t-1},p_{t-1})-\mathcal L_{\beta}(X_{t},p_{t-1})\right] \leq \bar {\mathcal F}_{\beta}$.
\end{proof}

%

\section{Computational experiments}

In this section the performance of \ourmethod is tested against
state-of-the art SDP solvers.
The experiments are performed on a 2019 Macbook Pro with an 8-core CPU and 32 GB of memory.
The methods are tested in SDPs arising from the following applications:
maximum stable set,
phase retrieval,
and matrix completion.

This section is organized into five subsections.
The first subsection provides details on the implementation of \ourmethod.
The second subsection explains the SDP solvers considered in the experiments.
The remaining subsections describe the results of the computational experiments in each of the applications.

\subsection{Implementation details}
Our implementation of \ourmethod uses the Julia programming language.
The implementation applies to a class of SDPs slightly more general than \eqref{eq:sdp-primal}.
Let $\FF \in \{\RR, \CC\}$ be either the field of real or complex numbers.
Let $\mathbb S^n(\RR)$ (resp.\ $\mathbb S^n(\CC)$) be the space of $n\times n$ symmetric (resp.\ complex Hermitian) matrices,
with Frobenius inner product $\bullet$ and with positive semidefinite partial order~$\succeq$.
The implementation applies to SDPs of the form
\begin{gather*}
\min_{X} \quad \{C \bullet X
\quad : \quad
\mathcal A X=b ,\quad
\tr X  \leq \tau, \quad X \succeq 0, \quad X \in \mathbb S^n(\FF) \}
\end{gather*}
where $b \in \RR^m$, $C\in \mathbb S(\FF)^n$, $\mathcal A: \mathbb S(\FF)^n \to \RR^m$ is a linear map,
and $\mathcal A^*: \RR^m \to \mathbb S^n(\FF)$ is its adjoint.
In contrast to \eqref{eq:sdp-primal}, the trace of $X$ is bounded by~$\tau$ instead of one.
The inputs for our implementation are the initial points $U_0$, $p_0$, the tolerances $\epsilon_{\mathrm{c}}$, $\epsilon_{\mathrm{p}}$,
and the data describing the SDP instance, which is explained below.
In the experiments,
the primal initial point $U_0$ is a random $n \times 1$ matrix with entries generated independently from the Gaussian distribution over~$\FF$,
and the dual initial point $p_0$ is the zero vector.
Our computational results below are based 
on a variant of \ourmethod, also referred to as \ourmethod in this section,
which differs slightly from one described in Section~\ref{HLR-AL Method}
in that the penalty parameter $\beta$ and the tolerance $\bar \epsilon$ for the AL subproblem are chosen in an adaptive manner
based on
some of the ideas of the LANCELOT method \cite{conn1991globally}.
%

The data defining the  SDP instance involves matrices of size $n \times n$
which should not be stored as dense arrays in large scale settings.
Instead of storing a matrix $M\in \mathbb S^n(\FF)$,
it is assumed that a routine that evaluates the linear operator $\mathfrak{L}(M):\FF^n \to \FF^n$, $v \mapsto M v$ is given by the user.



Similar to the Sketchy-CGAL method of \cite{yurtsever2021scalable}, our implementation of \ourmethod requires the following user inputs to describe the SDP instance:
\begin{enumerate}[label=(\roman*)]
    \item The vector $b \in \RR^m$ and the scalar $\tau > 0$.
    \item\label{item:C} A routine for evaluating the linear operator $\mathfrak{L}(C)$.
    \item\label{item:A*} A routine for evaluating linear operators of the form
    $\mathfrak{L}(\mathcal{A}^*p)$ for any $p \in \RR^m$.
    \item\label{item:qA} A routine for evaluating the quadratic function
    \begin{align}\label{eq:qA}
    q_{\mathcal{A}}: \FF^n \to \RR^m,\quad
    y \mapsto \mathcal{A}(y y^T).
    \end{align}
\end{enumerate}

Note that the routine in \ref{item:qA} allows $\mathcal{A}$ to be evaluated on any matrix in factorized form
since $\mathcal{A}(Y Y^T) = \sum_i q_{\mathcal A}(y_i)$
where the sum is over the columns $y_i$ of~$Y$.
In addition, the routines in \ref{item:C} and \ref{item:A*} allow to multiply matrices of the form $C + \mathcal{A}^* p$ with a matrix $Y$
by multiplying by each of the columns $y_i$ separately.
It follows that all steps of \ourmethod (including the steps of HLR and ADAP-AIPP) can be performed by using only the above inputs.
For instance, the eigenvalue computation in Step~2 of HLR is performed using iterative Krylov methods, which only require matrix-vector multiplications.



\subsection{Competing methods}\label{Competing Methods}

We compare \ourmethod against the following SDP solvers:
\begin{itemize}
    \item CSDP : Open source Julia solver based on interior point methods;
    \item COSMO: Open source Julia solver based on ADMM/operator splitting;
    \item SDPLR : Open source MATLAB solver based on Burer and Monteiro's LR method;
    \item SDPNAL+ : Open MATLAB source solver based on AL with a semismooth Newton method;
    \item T-CGAL : Thin variant of the CGAL method~\cite{yurtsever2019conditional} that stores the iterates $X_t$ in factored form;
    \item $r$-Sketchy : Low-rank variant of CGAL that only stores a sketch of~$\pi(X_t) \in \FF^{n\times r}$ of the iterates~$X_t \in \mathbb S^n(\FF)$.
\end{itemize}
We use the default parameters in all methods. The $r$-Sketchy method is tested with two possible values of $r$, namely, $r=10$ and $r = 100$.

Given a tolerance $\epsilon > 0$,
all methods, except SDPLR, stop when a primal solution $X \in \Delta^n$ and a dual solution
$(p,\theta,S) \in \mathbb R^{m} \times \mathbb R_{+} \times \mathbb S^{+}_{n}$
satisfying
\begin{align}\label{eq:termination}
 \frac{\|\mathcal A X - b\|}{1 + \|b\|} \leq \epsilon, \qquad \frac{|\mathrm{pval} - \mathrm{dval}|}{1 + |\mathrm{pval}| + |\mathrm{dval}|} \leq \epsilon,\qquad
 \frac{\|C + \mathcal A^* p + \theta I - S\|}{1 + \|C\|}\leq \epsilon,
\end{align}
is generated, 
where $\mathrm{pval}$ and $\mathrm{dval}$ denote the primal and dual values of the solution, respectively. SDPLR terminates solely based off primal feasibility, i.e., it terminates when the first condition in \eqref{eq:termination} is satisfied.
The above conditions are standard in the SDP literature,
although some solvers use the $l_\infty$ norm instead of the Euclidean norm.
Given a vector $p \in \mathbb R^{m}$, \ourmethod, SDPLR, r-Sketchy, and T-CGAL,
set $\theta:= \max\{-\lambda_{\min}(C+\mathcal A^{*}p) , 0\}$ and
$S:=C+\mathcal A^{*}p+\theta I$. The definition of $\theta$ implies that
$S \succeq 0$ and that the left-hand side of the last inequality in \eqref{eq:termination} is zero.

Recall that a description of $r$-Sketchy is already given in the paragraph preceding the part titled "Structure of the paper" in the Introduction.
We now comment on
how this
method keeps track of its iterates and how it terminates.
First, it never stores
the iterate $U_t$ but only a
rank~$r$ approximation $\tilde U_t$ of it as already mentioned in the aforementioned paragraph of the Introduction.
(We refer to $U_t$ as the implicit iterate as is never computed and $\tilde U_t$ as the computed one.)
Second, it keeps track of
the quantities
$C \bullet (U_tU_t^{T})$ and $\mathcal A(U_t U^{T}_t)$ which, as can be easily verified,
allow the first two relative errors in  \eqref{eq:termination} with $X=U_tU_t^T$ to be easily evaluated.
Third, we kept the termination criterion for $r$-Sketchy code intact in
that it still stops when all three errors in \eqref{eq:termination} computed with the implicit solution $U_t$, i.e., with $X=U_tU_t^{T}$, are less than or equal to $\varepsilon$.
The fact that $r$-Sketchy terminates based on the implicit solution does not guarantee, and is even unlikely, that it would terminate based on the computed solution.
Fourth, if $r$-Sketchy does not terminate within the time limit specified for each problem class, the maximum of the three errors in \eqref{eq:termination} at its final computed solution
$\tilde U_t$, i.e., with $X=\tilde U_t\tilde U_t^{T}$,
is reported.

The solver COSMO includes an optional chordal decomposition pre-processing step (thoroughly discussed in \cite{vandenberghe2015chordal, kojima, Wolk}), which has not been invoked in the computational experiments reported below. 
This ensures that all solvers are compared over the same set of SDP instances.

The tables in the following three subsections present the results of the computational experiments performed on large collections of
maximum stable set, phase retrieval, and matrix completion, SDP instances.
A relative tolerance $\epsilon=10^{-5}$ is set and a time limit of either 10000 seconds ($\approx 3$ hours) or 14400 seconds ($=$ 4 hours) is given. An entry of a table marked with $*/N$
(resp., $**$) means that the corresponding method finds an approximate solution (resp., crashed) with relative accuracy 
strictly larger than the desired accuracy $10^{-5}$ in which case $N$
expresses the maximum of the three final relative accuracies
in \eqref{eq:termination}. For $r$-Sketchy, entries marked as $*/N$ mean that it did not terminate within the time limit and the maximum of the three final relative accuracies in \eqref{eq:termination} of its final computed solution $\tilde U_t$ was $N$. The bold numbers in the tables indicate the algorithm that had the best runtime for that instance.

\subsection{Maximum stable set}

\begin{table}[!tbh]
\captionsetup{font=scriptsize}
\begin{centering}
\begin{tabular}{>{\centering}p{2.9cm}|>{\centering}p{1.4cm}>{\centering}p{1.4cm}>{\centering}p{1.5cm}>{\centering}p{1.6cm}>{\centering}p{1.4cm}>{\centering}p{1.4cm}>{\centering}p{1.4cm}}
\multicolumn{1}{c}{\textbf{\scriptsize{Problem Instance}}} & \multicolumn{7}{c}{\textbf{\scriptsize{}Runtime (seconds)}} \tabularnewline
\toprule
\scriptsize{Graph($n$; $|E|$)} & {\scriptsize{}\ourmethod} & {\scriptsize{}T-CGAL} & {\scriptsize{}10-Sketchy} & {\scriptsize{}100-Sketchy} & {\scriptsize{}CSDP} & {\scriptsize{}COSMO} & {\scriptsize{}SDPNAL+}\tabularnewline
\midrule
\scriptsize{G1(800; 19,176)} & \scriptsize{218.23} & \scriptsize{*/.13e-02} & \scriptsize{*/.31e-01} & \scriptsize{*/.31e-01} & \scriptsize{226.01} & \scriptsize{322.91} & \scriptsize{\textbf{60.30}} \tabularnewline



\scriptsize{G10(800; 19,176)} & \scriptsize{241.51} &{\scriptsize{*/.20e-02}} & {\scriptsize{*/.78e-01}} & {\scriptsize{*/.31e-01}}& {\scriptsize{220.44}} &  {\scriptsize{229.22}} & {\scriptsize{\textbf{55.30}}} \tabularnewline

\hline

\scriptsize{G11(800; 1,600)} & {\scriptsize{\textbf{3.01}}} &{\scriptsize{1631.45}} & {\scriptsize{220.59}} & {{\scriptsize{234.76}}}& {{\scriptsize{3.74}}} &  {\scriptsize{118.66}} & {\scriptsize{73.50}} \tabularnewline

\scriptsize{G12(800; 1,600)} & {\scriptsize{\textbf{3.06}}} &{\scriptsize{414.27}} & {\scriptsize{72.56}} & {{\scriptsize{57.03}}}& {{\scriptsize{3.12}}} &  {\scriptsize{9531.99}} & {\scriptsize{70.20}} \tabularnewline


\hline
\scriptsize{G14(800; 4,694)} & {\scriptsize{66.46}} &{\scriptsize{*/.27e-03}} & {\scriptsize{6642.00}} & {{\scriptsize{7231.30}}}& {{\scriptsize{\textbf{9.31}}}} &  {\scriptsize{3755.36}} & {\scriptsize{115.30}} \tabularnewline


\scriptsize{G20(800; 4,672)} & {\scriptsize{439.37}} &{\scriptsize{*/.37e-03}} & {\scriptsize{*/.12e-01}} & {\scriptsize{*/.12e-01}}& {\scriptsize{\textbf{11.42}}} &  {\scriptsize{*/.69e-04}} & {\scriptsize{341.20}} \tabularnewline


\hline
\scriptsize{G43(1000; 9,990)} & \scriptsize{96.79} &{\scriptsize{*/.25e-04}} & {\scriptsize{*/.51e-01}} & {\scriptsize{*/.26e-01}}& {\scriptsize{\textbf{42.39}}} &  {\scriptsize{*/.10e-02}} & {\scriptsize{62.10}} \tabularnewline

\scriptsize{G51(1,000; 5,909)} & {\scriptsize{190.98}}&{\scriptsize{*/.23e-03}} & {\scriptsize{*/.98e-02}} & {\scriptsize{*/.99e-02}}& {\scriptsize{\textbf{16.97}}} &  {\scriptsize{*/.33e-04}} & {\scriptsize{284.40}} \tabularnewline

\hline


\scriptsize{G23(2,000; 19,990)} & {\scriptsize{390.34}} &{\scriptsize{*/.64e-03}} & {\scriptsize{*/.86e-01}} & {\scriptsize{*/.19e-01}}& {\scriptsize{\textbf{288.77}}} &  {\scriptsize{5739.78}} & {\scriptsize{503.80}} \tabularnewline


\scriptsize{G31(2,000; 19,990)} & {\scriptsize{357.75}} &{\scriptsize{*/.68e-03}} & {\scriptsize{*/.20e-01}} & {\scriptsize{*/.19e-01}}& {\scriptsize{\textbf{290.72}}} &  {\scriptsize{5946.84}} & {\scriptsize{498.30}} \tabularnewline

\hline 

\scriptsize{G32(2,000; 4,000)} & {\scriptsize{\textbf{3.62}}} &{\scriptsize{3732.70}} & {\scriptsize{329.17}} & {{\scriptsize{349.73}}}& {{\scriptsize{29.04}}} &  {\scriptsize{*/.58e-03}} & {\scriptsize{853.90}} \tabularnewline


\scriptsize{G34(2,000; 4,000)} & \scriptsize{\textbf{3.52}} &{\scriptsize{1705.60}} & {\scriptsize{162.85}} & {{\scriptsize{177.84}}}& {{\scriptsize{29.04}}} &  {\scriptsize{458.11}} & {\scriptsize{1101.80}} \tabularnewline

\hline 
\scriptsize{G35(2,000; 11,778)} & \scriptsize{730.54} &{\scriptsize{*/.78e-03}} & {\scriptsize{*/.62e-02}} & {\scriptsize{*/.62e-02}}& {\scriptsize{730.54}} &  {\scriptsize{\textbf{120.60}}} & {\scriptsize{2396.60}} \tabularnewline



\scriptsize{G41(2,000; 11,785)} & \scriptsize{555.02} &{\scriptsize{*/.17e-02}} & {\scriptsize{*/.59e-02}} & {\scriptsize{*/.59e-02}}& {\scriptsize{\textbf{114.73}}} &  {\scriptsize{*/.37e-03}} & {\scriptsize{2027.20}} \tabularnewline

\hline 

\scriptsize{G48(3,000; 6,000)} &\scriptsize{\textbf{3.49}} &{\scriptsize{4069.30}} & {\scriptsize{288.64}} & {{\scriptsize{306.91}}}& {{\scriptsize{81.97}}} &  {\scriptsize{1840.91}} & {\scriptsize{6347.50}} \tabularnewline

\hline

\scriptsize{G55(5,000; 12,498)} & \scriptsize{\textbf{253.22}} &{\scriptsize{*/.33e-02}} & {\scriptsize{*/.80e-02}} & {\scriptsize{*/.79e-02}}& {\scriptsize{535.57}} &  {\scriptsize{*/.11e-02}} & {\scriptsize{*/.17e-01}} \tabularnewline

\scriptsize{G56(5,000; 12,498)} & \scriptsize{\textbf{264.46}} &{\scriptsize{*/.38e-02}} & {\scriptsize{*/.80e-02}} & {\scriptsize{*/.79e-02}}& {\scriptsize{523.06}} &  {\scriptsize{*/.27e-02}} & {\scriptsize{*/.20e00}} \tabularnewline

\hline

\scriptsize{G57(5,000; 10,000)} & \scriptsize{\textbf{4.14}} &{\scriptsize{7348.50}} & {\scriptsize{791.75}} & {{\scriptsize{831.06}}}& {{\scriptsize{336.93}}} &  {\scriptsize{8951.40}} & {\scriptsize{*/.10e00}} \tabularnewline

\hline

\scriptsize{G58(5,000; 29,570)} & \scriptsize{2539.83} &{\scriptsize{*/.96e-02}} & {\scriptsize{*/.24e-01}} & {\scriptsize{*/.43e-02}}& {\scriptsize{\textbf{2177.03}}} &  {\scriptsize{*/.20e-03}} & {\scriptsize{*/.43e-01}} \tabularnewline

\scriptsize{G59(5,000; 29,570)} & \scriptsize{2625.47} &{\scriptsize{*/.54e-02}} & {\scriptsize{*/.24e-01}} & {\scriptsize{*/.43e-02}}& {\scriptsize{\textbf{2178.33}}} &  {\scriptsize{*/.37e+02}} & {\scriptsize{*/.65e-01}} \tabularnewline

\hline
\scriptsize{G60(7,000; 17,148)} & \scriptsize{\textbf{476.65}} & \scriptsize{*/.21e-02} & \scriptsize{*/.13e-01} & \scriptsize{*/.67e-02} & \scriptsize{2216.50} & \scriptsize{*/.39e+02} &\scriptsize{*/.10e+01} \tabularnewline

\scriptsize{G62(7,000; 14,000)} &\scriptsize{\textbf{5.00}}  & \scriptsize{*/.28e-03} & \scriptsize{1795.50} & {\scriptsize{1474.40}}  & {\scriptsize{1463.18}} & \scriptsize{*/.12e-03} & \scriptsize{*/.10e+01} \tabularnewline

\scriptsize{G64(7,000; 41,459)} & {\scriptsize{\textbf{3901.68}}} &{\scriptsize{*/.16e-01}} & {\scriptsize{*/.36e-02}}& {\scriptsize{*/.36e-02}} &  {\scriptsize{7127.72}} & {\scriptsize{*/.99e+01}} & {\scriptsize{*/.10e+01}}\tabularnewline
\hline

\scriptsize{G66(9,000; 18,000)} & {\scriptsize{\textbf{5.77}}} &{\scriptsize{*/.82e-01}} & {\scriptsize{2788.70}}& {{\scriptsize{3022.70}}} &  {{\scriptsize{2076.17}}} &  {\scriptsize{*/.77e-03}} &  {\scriptsize{*/.10e+01}}\tabularnewline

\scriptsize{G67(10,000; 20,000)} & {\scriptsize{\textbf{5.87}}} &{\scriptsize{*/.13e-01}} & {\scriptsize{3725.80}} & {{\scriptsize{3941.70}}} &  {{\scriptsize{7599.80}}} & {\scriptsize{**}} &  {\scriptsize{*/.47e+01}} \tabularnewline

\hline

\scriptsize{G72(10,000; 20,000)} & {\scriptsize{\textbf{5.92}}} &{\scriptsize{*/.11e00}} & {\scriptsize{3936.30}} & {{\scriptsize{3868.60}}} &  {{\scriptsize{7450.01}}}  & {\scriptsize{**}} &  {\scriptsize{*/.47e+01}} \tabularnewline

\scriptsize{G77(14,000; 28,000)} & {\scriptsize{\textbf{8.08}}} &{\scriptsize{*/.24e00}} & {\scriptsize{*/.60e-02}} & {\scriptsize{*/.60e-02}} &  {\scriptsize{**}} & {\scriptsize{**}} &  {\scriptsize{*/.99e00}}\tabularnewline

\scriptsize{G81(20,000; 40,000)} & {\scriptsize{\textbf{10.89}}} &{\scriptsize{*/.10e00}} & {\scriptsize{*/.91e-01}} & {\scriptsize{*/.71e-01}} &  {\scriptsize{**}} & {\scriptsize{**}} &  {\scriptsize{*/.10e+01}}\tabularnewline

\scriptsize{tor(69,192; 138,384)} & {\scriptsize{\textbf{40.64}}} &{\scriptsize{*/.38e00}} & {\scriptsize{*/.63e00}} & {\scriptsize{*/.29e00}} &  {\scriptsize{**}} & {\scriptsize{**}} &  {\scriptsize{**}}\tabularnewline

\bottomrule
\end{tabular}
\par\end{centering}
\caption{{Runtimes (in seconds) for the Maximum Stable Set problem. A relative tolerance of $\epsilon=10^{-5}$ is set and a time limit of 10000 seconds is given.  An entry marked with $*/N$
(resp., $**$) means that the corresponding method finds an approximate solution (resp., crashed) with relative accuracy 
strictly larger than the desired accuracy in which case $N$
expresses the maximum of the three relative accuracies
in \eqref{eq:termination}.}}\label{tab:StableSetSmall}
\end{table}

Given a graph $G = ([n],E)$,
the maximum stable set problem consists of finding
a subset of vertices of largest cardinality
such that no two vertices are connected by an edge.
Lov\'asz \cite{lovasz1979shannon} introduced a constant, the $\vartheta$-function, which upper bounds the value of the maximum stable set.
The $\vartheta$-function is the value of the SDP
\begin{align}\label{eq:lovasz}
    \max \quad \{{e}{e}^T \bullet X \; : \;
    X_{ij} = 0, \; ij \in E, \;
    \tr X = 1, \;
    X \succeq 0,\quad
    X \in \mathbb S^n(\RR)
    \}
\end{align}
where $e = (1,1,\dots,1) \in \RR^n$ is the all ones vector.
It was shown in \cite{grotschel1984polynomial} that the $\vartheta$-function agrees exactly with the stable set number for perfect graphs.

\begin{table}[!tbh]
\captionsetup{font=scriptsize}
\begin{centering}
\begin{tabular}{>{\centering}p{4.5cm}|>{\centering}p{1.5cm}>{\centering}p{1.5cm}>{\centering}p{1.5cm}>{\centering}p{1.7cm}}
\multicolumn{1}{c}{\textbf{\scriptsize{Problem Instance}}} & \multicolumn{4}{c}{\textbf{\scriptsize{}Runtime (seconds)}} \tabularnewline
\toprule
\scriptsize{Graph($n$; $|E|$)} & {\scriptsize{}\ourmethod} & {\scriptsize{}T-CGAL}  & {\scriptsize{10-Sketchy}} & {\scriptsize{100-Sketchy}} \tabularnewline
\midrule

\scriptsize{$H_{13,2}$(8,192; 53,248)} &{\scriptsize{\textbf{5.04}}} & {\scriptsize{*/.23e00}}& {{\scriptsize{1603.80}}} & {{\scriptsize{882.03}}}    \tabularnewline

\scriptsize{$H_{14,2}$(16,384; 114,688)} & {\scriptsize{\textbf{9.09}}} &  {\scriptsize{*/.45e00}} & {{\scriptsize{6058.60}}} & {{\scriptsize{6712.20}}}  \tabularnewline

\scriptsize{$H_{15,2}$(32,768; 245,760)} & {\scriptsize{\textbf{65.22}}} & {\scriptsize{*/.19e00}} & {\scriptsize{*/.19e-01}} & {\scriptsize{*/.14e-01}}   \tabularnewline

\scriptsize{$H_{16,2}$(65,536; 524,288)} & {\scriptsize{\textbf{104.71}}} & {\scriptsize{*/.11e-01}} & {\scriptsize{*/.24e00}} & {\scriptsize{*/.11e-01}}  \tabularnewline

\hline

\scriptsize{$H_{17,2}$(131,072; 1,114,112)} & {\scriptsize{\textbf{69.63}}} & {\scriptsize{*/.34e-01}}  & {\scriptsize{*/.72e00}} & {\scriptsize{*/.32e-01}}  \tabularnewline

\scriptsize{$H_{18,2}$(262,144; 2,359,296)} & {\scriptsize{\textbf{244.90}}} &  {\scriptsize{*/.99e-02}} & {\scriptsize{*/.88e-02}} & {\scriptsize{*/.31e00}}   \tabularnewline

\scriptsize{$H_{19,2}$(524,288; 4,980,736)} & {\scriptsize{\textbf{786.73}}} & {\scriptsize{*/.42e00}}  & {\scriptsize{*/.35e00}} & {\scriptsize{*/.24e00}}  \tabularnewline

\scriptsize{$H_{20,2}$(1,048,576; 10,485,760)} &{\scriptsize{\textbf{1157.96}}} & {\scriptsize{*/.47e00}}   & {\scriptsize{*/.31e-02}} & {\scriptsize{*/.31e-02}} \tabularnewline

\bottomrule
\end{tabular}
\par\end{centering}
\caption{Runtimes (in seconds) for the Maximum Stable Set problem. A relative tolerance of $\epsilon=10^{-5}$ is set and a time limit of 14400 seconds (4 hours) is given.  An entry marked with $*/N$
(resp., $**$) means that the corresponding method finds an approximate solution (resp., crashed) with relative accuracy 
strictly larger than the desired accuracy in which case $N$
expresses the maximum of the three relative accuracies
in \eqref{eq:termination}.}
\label{tab:StableSetLarge}
\end{table}

\begin{table}[!tbh]
\captionsetup{font=scriptsize}
\begin{centering}
\begin{tabular}{>{\centering}p{4.5cm}|>{\centering}p{2.5cm}}
\multicolumn{1}{c}{\textbf{\scriptsize{Problem Instance}}} & \multicolumn{1}{c}{\textbf{\scriptsize{}Runtime (seconds)}} \tabularnewline
\toprule
\scriptsize{Graph($n$; $|E|$)} & {\scriptsize{}\ourmethod} \tabularnewline
\midrule

\scriptsize{$H_{21,2}$(2,097,152; 22,020,096)} &  {\scriptsize{{2934.33}}} \tabularnewline

\scriptsize{$H_{22,2}$(4,194,304; 46,137,344)} & {\scriptsize{6264.50}} \tabularnewline

\scriptsize{$H_{23,2}$(8,388,608; 96,468,992)} &{\scriptsize{{14188.23}}} \tabularnewline

\scriptsize{$H_{24,2}$(16,777,216; 201,326,592)} &{\scriptsize{{46677.82}}} \tabularnewline

\bottomrule
\end{tabular}
\par\end{centering}
\caption{Runtimes (in seconds) for the Maximum Stable Set problem. A relative tolerance of $\epsilon=10^{-5}$ is set.}
\label{tab:StableSetHuge}
\end{table}

\begin{table}[!tbh]
\captionsetup{font=scriptsize}
\begin{centering}
\begin{tabular}{>{\centering}p{2.6cm}>{\centering}p{2.6cm} >{\centering}p{2.2cm}|>{\centering}p{2.5cm}}
\multicolumn{3}{c}{\textbf{\scriptsize{Problem Instance}}} & \multicolumn{1}{c}{\textbf{\scriptsize{}Runtime (seconds)}} \tabularnewline
\toprule
\scriptsize{Problem Size $(n; m)$} & \scriptsize{Graph} &\scriptsize{Dataset} & {\scriptsize{\ourmethod}} \tabularnewline
\midrule

\scriptsize{10,937; 75,488} & \scriptsize{wing\_nodal} & \scriptsize{DIMACS10} & {\scriptsize{1918.48}}  \tabularnewline

\scriptsize{16,384; 49,122} & \scriptsize{delaunay\_n14} & \scriptsize{DIMACS10} & {\scriptsize{1355.01}}  \tabularnewline

\scriptsize{16,386; 49,152} & \scriptsize{fe-sphere} & \scriptsize{DIMACS10} & {\scriptsize{147.93}}  \tabularnewline

\scriptsize{22,499; 43,858} & \scriptsize{cs4} & \scriptsize{DIMACS10} & {\scriptsize{747.66}}  \tabularnewline

\scriptsize{25,016; 62,063} & \scriptsize{hi2010} & \scriptsize{DIMACS10} & \scriptsize{3438.06}  \tabularnewline

\scriptsize{25,181; 62,875} & \scriptsize{ri2010} & \scriptsize{DIMACS10} & \scriptsize{2077.97}  \tabularnewline

\scriptsize{32,580; 77,799} & \scriptsize{vt2010} & \scriptsize{DIMACS10} & \scriptsize{2802.37}  \tabularnewline

\scriptsize{48,837; 117,275} & \scriptsize{nh2010} & \scriptsize{DIMACS10} & \scriptsize{8530.38}  \tabularnewline

\hline

\scriptsize{24,300; 34,992} & \scriptsize{aug3d} & \scriptsize{GHS\_indef} & {\scriptsize{8.56}}  \tabularnewline

\scriptsize{32,430; 54,397} & \scriptsize{ia-email-EU} & \scriptsize{Network Repo} & {\scriptsize{530.21}}  \tabularnewline

\hline

\scriptsize{11,806; 32,730} & \scriptsize{Oregon-2} & \scriptsize{SNAP} & {\scriptsize{2787.19}}  \tabularnewline

\scriptsize{11,380; 39,206} & \scriptsize{wiki-RFA\_negative} & \scriptsize{SNAP} & {\scriptsize{1151.31}}  \tabularnewline

\scriptsize{21,363; 91,286} & \scriptsize{ca-CondMat} & \scriptsize{SNAP} & {\scriptsize{7354.75}}  \tabularnewline

\scriptsize{31,379; 65,910} & \scriptsize{as-caida\_G\_001} & \scriptsize{SNAP} & {\scriptsize{3237.93}}  \tabularnewline

\scriptsize{26,518; 65,369} & \scriptsize{p2p-Gnutella24} & \scriptsize{SNAP} & {\scriptsize{344.83}}  \tabularnewline

\scriptsize{22,687; 54,705} & \scriptsize{p2p-Gnutella25} & \scriptsize{SNAP} & {\scriptsize{235.03}}  \tabularnewline

\scriptsize{36,682; 88,328} & \scriptsize{p2p-Gnutella30} & \scriptsize{SNAP} & {\scriptsize{542.07}}  \tabularnewline

\scriptsize{62,586; 147,892} & \scriptsize{p2p-Gnutella31} & \scriptsize{SNAP} & {\scriptsize{1918.30}}  \tabularnewline

\hline 
\scriptsize{49,152; 69,632} & \scriptsize{cca} & \scriptsize{AG-Monien} & {\scriptsize{47.24}}  \tabularnewline

\scriptsize{49,152; 73,728} & \scriptsize{ccc} & \scriptsize{AG-Monien} & {\scriptsize{12.14}}  \tabularnewline

\scriptsize{49,152; 98,304} & \scriptsize{bfly} & \scriptsize{AG-Monien} & {\scriptsize{13.15}}  
\tabularnewline

\scriptsize{16,384; 32,765} & \scriptsize{debr\_G\_12} & \scriptsize{AG-Monien} & {\scriptsize{818.61}}  
\tabularnewline

\scriptsize{32,768; 65,533} & \scriptsize{debr\_G\_13} & \scriptsize{AG-Monien} & {\scriptsize{504.29}}  \tabularnewline

\scriptsize{65,536; 131,069} & \scriptsize{debr\_G\_14} & \scriptsize{AG-Monien} & {\scriptsize{466.67}}  \tabularnewline

\scriptsize{131,072; 262,141} & \scriptsize{debr\_G\_15} & \scriptsize{AG-Monien} & {\scriptsize{488.07}}  \tabularnewline

\scriptsize{262,144; 524,285} & \scriptsize{debr\_G\_16} & \scriptsize{AG-Monien} & {\scriptsize{1266.71}}  \tabularnewline

\scriptsize{524,288; 1,048,573} & \scriptsize{debr\_G\_17} & \scriptsize{AG-Monien} & {\scriptsize{5793.57}}  \tabularnewline

\scriptsize{1,048,576; 2,097,149} & \scriptsize{debr\_G\_18} & \scriptsize{AG-Monien} & {\scriptsize{13679.12}}  \tabularnewline

\bottomrule
\end{tabular}
\par\end{centering}
\caption{Runtimes (in seconds) for the Maximum stable set problem. A relative tolerance of $\epsilon=10^{-5}$ is set.}\label{StableSetReal/NetworkData}
\end{table}

\begin{table}[!tbh]
\captionsetup{font=scriptsize}
\begin{centering}
\begin{tabular}{>{\centering}p{3.4cm}|>{\centering}p{1.5cm}>{\centering}p{2.5cm}}
\multicolumn{1}{c}{\textbf{\scriptsize{Problem Instance}}} & \multicolumn{2}{c}{\textbf{\scriptsize{}Runtime (seconds)}} \tabularnewline
\toprule
\scriptsize{Graph($n$; $|E|$)} & {\scriptsize{\ourmethod}} & {\scriptsize{SDPLR}} \tabularnewline
\midrule

\scriptsize{$H_{10,2}$(1024; 5120)} &\scriptsize{2.90} & \scriptsize{\textbf{1.28}} \tabularnewline

\scriptsize{$H_{11,2}$(2048; 11264)} & {\scriptsize{\textbf{3.03}}} & \scriptsize{10.14} \tabularnewline

\scriptsize{$H_{12,2}$(4096; 24576)} & {\scriptsize{\textbf{3.49}}} & {\scriptsize{56.60/.12e-03}}\tabularnewline

\scriptsize{$H_{13,2}$(8192; 53248)} &{\scriptsize{\textbf{5.04}}} & {\scriptsize{399.89/.38e-03}} \tabularnewline

\scriptsize{$H_{14,2}$(16384; 114688)} & {\scriptsize{\textbf{9.09}}} & {\scriptsize{2469.11/.16e-02}}\tabularnewline

\scriptsize{$H_{15,2}$(32768; 245760)} & {\scriptsize{\textbf{65.22}}} & {\scriptsize{*/.11e-01/.46e00}} \tabularnewline

\bottomrule
\end{tabular}
\par\end{centering}
\caption{Runtimes (in seconds) for the maximum stable set problem. A relative tolerance of $\epsilon=10^{-5}$ is set and a time limit of 14400 seconds (4 hours) is given. An entry marked with */N1/N2 means that the corresponding method finds an approximate solution that satisfies the first relation in \eqref{eq:termination} with relative accuracy 
strictly larger than the desired accuracy of $10^{-5}$ in which case $N1$ (resp., $N2$)
expresses the final accuracy that the method satisfies the first (resp., second) relation in \eqref{eq:termination} with.}\label{tab:SDPLR}
\end{table}

Tables \ref{tab:StableSetSmall}, \ref{tab:StableSetLarge},  \ref{tab:StableSetHuge}, \ref{StableSetReal/NetworkData}, and \ref{tab:SDPLR} present the results of the computational experiments performed on the maximum stable set~SDP. Table~\ref{tab:StableSetSmall} compares \ourmethod against all the methods listed in Subsection~\ref{Competing Methods} on smaller graph instances, i.e., with number of vertices not exceeding 70,000. All graph instances considered, except the last instance, are taken from the GSET data set, a curated collection of randomly generated graphs that can be found in \cite{gset}. 
The larger GSET graphs (GSET 66-81) are all toroidal graphs where every vertex has degree 4. The last graph instance presented in Table~\ref{tab:StableSetSmall} is a large toroidal graph with approximately 70,000 vertices that we generated ourselves.
A time limit of 10000 seconds (approximately 3 hours) is given. {Table~\ref{tab:StableSetLarge} compares \ourmethod against T-CGAL, $10$-Sketchy, and 100-Sketchy, on large graph instances with up to 1 million vertices and 10 million edges. CSDP
was not included in
Table~\ref{tab:StableSetLarge}
since it crashed on all but one of the instances included in it.} All graph instances considered in Table~\ref{tab:StableSetLarge} are Hamming $H(d,2)$ graphs, a special class of graphs that has $2^d$ number of vertices and $d\, 2^{d-1}$ number of edges. The vertex set of such graphs can be seen as corresponding to binary words of length~$d$,
and the edges correspond to binary words that differ in one bit.
A time limit of 14400 seconds (4 hours) is now given.

Tables \ref{tab:StableSetHuge} and \ref{StableSetReal/NetworkData} solely present the performance of \ourmethod on extremely large-sized Hamming instances (i.e., with number of vertices exceeding 2 millon) and hard graph instances from real-world datasets, respectively. The graph instances considered in Table~\ref{StableSetReal/NetworkData} are taken from the DIMACS10, Stanford SNAP, AG-Monien, GHS\_indef, and Network Repositories \cite{snapnets, nr, FloridaMatrix, DIMACS10}.

Table~\ref{tab:SDPLR} displays a special comparison between \ourmethod and SDPLR on 6 different Hamming graphs. Recall that SDPLR terminates only based off the first condition in \eqref{eq:termination} and hence often finds a solution that does not satisfy the second condition in \eqref{eq:termination} with the desired accuracy of $\epsilon=10^{-5}$. An entry marked with time/N in Table~\ref{tab:SDPLR}
means that the corresponding method finds an approximate solution (within the time limit) that satisfies the first relation in \eqref{eq:termination} with $\epsilon=10^{-5}$ but does not satisfy the second relation in \eqref{eq:termination} with the desired accuracy in which case $N$
expresses the final accuracy that the method satisfies the second relation in \eqref{eq:termination} with. An entry marked with */N1/N2 means that SDPLR finds an approximate solution that satisfies the first relation in \eqref{eq:termination} with relative accuracy 
strictly larger than the desired accuracy of $10^{-5}$ in which case $N1$ (resp., $N2$)
expresses the final accuracy that SDPLR satisfies the first (resp., second) relation in \eqref{eq:termination} with.

Remarks about the results presented in Tables \ref{tab:StableSetSmall}, \ref{tab:StableSetLarge}, \ref{tab:StableSetHuge}, \ref{StableSetReal/NetworkData}, and \ref{tab:SDPLR} are now given. As seen from Table~\ref{tab:StableSetSmall}, CSDP and \ourmethod are the two best performing methods on these smaller graph instances. HALLaR, however, is the only method that can solve each of the instances to the desired accuracy of $10^{-5}$ within the time limit of approximately 3 hours. On graph instances where the number of vertices exceeds 14,000 (resp., 10,000), CSDP (resp., COSMO) cannot perform a single iteration within 3 hours or crashes. SDPNAL+ crashed with a lack of memory error on the last graph instance with 69,192 vertices. Although T-CGAL, 10-Sketchy, and 100-Sketchy do not perform especially well on the smaller graph instances considered in Table~\ref{tab:StableSetSmall}, they are included for comparison on the larger graph instances considered in Table~\ref{tab:StableSetLarge} since they require considerably less memory than CSDP, COSMO, and SDPNAL+. The results presented in Table~\ref{tab:StableSetLarge} demonstrate that \ourmethod performs especially well for larger instances as it is the only method that can solve all instances within the time limit of 4 hours. T-CGAL, 10-Sketchy, and 100-Sketchy cannot find a solution with the desired accuracy of $10^{-5}$ on most of the instances considered, often finding solutions with accuracies on the range of $10^{0}$ to $10^{-2}$. CSDP was tested on the problems considered in Table~\ref{tab:StableSetLarge} but not included for comparison since it crashed on every instance except one. COSMO and SDPNAL+ are not included for comparison due to their high memory requirements.

Tables~\ref{tab:StableSetHuge} and \ref{StableSetReal/NetworkData} show that \ourmethod can solve extremely large Hamming instances and hard real-world instances, respectively, within a couple of hours. As seen from Table~\ref{tab:StableSetHuge}, \ourmethod can solve a Hamming instance with 4 million vertices and 40 million edges (resp. 16 million vertices and 200 million edges) in under 2 hours (resp., 13 hours). Table~\ref{StableSetReal/NetworkData} shows that \ourmethod can solve a huge Debruijin graph instance (which arises in the context of genome assembly) in just a few hours.

The results presented in Table \ref{tab:SDPLR} display the superior performance of \ourmethod compared to SDPLR on six different Hamming graphs.  \ourmethod not only finds more accurate solutions than SDPLR within the time limit of 4 hours but is also at least 80 times faster than SDPLR on the three largest instances.

\subsection{Phase retrieval}\label{Phase Retrieval SDP}

Given $m$ pairs $\{(a_i,b_i)\}_{i=1}^m \subseteq \CC^n \times \RR_+$, consider the problem of finding a vector $x \in \mathbb C^n$ such that
\begin{align*}
    |\langle a_i, x \rangle|^2 = b_i,
    \quad i=1,\dots,m.
\end{align*}
In other words, the goal is to retrieve $x$ from the magnitude of $m$ linear measurements.
By creating the complex Hermitian matrix $X = x x^H$,
this problem can be approached by solving the complex-valued SDP relaxation
\[
\min_X \quad \left\{\tr(X)\quad
:\quad \langle a_i a_i^H , X\rangle  = b_i,\quad
X \succeq 0,\quad
X \in \mathbb S^n(\CC)
\right\}.
\]
The motivation of the trace objective function is that it promotes obtaining a low rank solution.
It was shown in \cite{candes2013phaselift}
that the relaxation is tight (i.e., the vector $x$ can be retrieved from the SDP solution $X$)
when the vectors $a_i$ are sampled independently and uniformly on the unit sphere.
Notice that this class of SDPs does not have a trace bound.
However, since the objective function is precisely the trace,
any bound on the optimal value can be used as the trace bound.
In particular, the squared norm of the vector $x$ is a valid trace bound.
Even though $x$ is unknown, bounds on its norm are known (see for example \cite{yurtsever2015scalable}).

Computational experiments are performed on the synthetic data set from \cite{yurtsever2021scalable}
that is based on the coded diffraction pattern model from \cite{candes2015phase}.
Given $n$, the hidden solution vector ${x} \in \CC^n$ is generated from the complex standard normal distribution.
The are $m = 12 n$ measurements that are indexed by pairs $(j,l) \in [12] \times [n]$.
Consider vectors $y_j \in \CC^n$ for $j \in [12]$,
where the entries of $y_j$ are products of of two independent random variables:
the first is the uniform distribution on $\{1, i, -1, -i\}$, 
and the second chooses from $\{ \sqrt{2}/2, \sqrt{3}\}$ with probabilities $4/5$ and $1/5$.
The linear measurements correspond to modulating the vector ${x}$ with each of the $y_j$'s and then taking a discrete Fourier transform:
\begin{align*}
    \inner{a_{j,k}}{x} := \mathrm{DFT}(y_j \circ x)_l \text{ for } j \in [12],\ l\in [n]
\end{align*}
where $\circ$ denotes the Hadamard product,
and $\mathrm{DFT}(\cdot)_l$ denotes the $l$-th entry of the discrete Fourier transform.
The vector $b$ is obtained by applying the measurements to ${x}$.
The trace bound is set as $\tau = 3 n$, similarly as in \cite{yurtsever2021scalable}.

Tables~\ref{tab:PhaseRetMed} and \ref{PhaseRetLarge} present the results of the computational experiments performed on the phase retrieval~SDP. As mentioned in the above paragraph, all instances considered are taken from a synthetic dataset that can be found in \cite{yurtsever2021scalable}. Table~\ref{tab:PhaseRetMed} compares \ourmethod against T-CGAL, $10$-Sketchy, and 100-Sketchy on medium sized phase retrieval instances, i.e., the dimension $n$ is either 10000 or 31623. The ranks of the outputted solutions of \ourmethod and T-CGAL are now also reported. For entries corresponding to \ourmethod and T-CGAL, the number reported after the last forward slash 
indicates the rank of that corresponding method's outputted solution.
A time limit of 14400 seconds (4 hours) is given. Table~\ref{PhaseRetLarge} solely presents the performance of \ourmethod on larger sized phase retrieval instances, i.e., with dimension $n$ greater than or equal to 100,000. The rank of the outputted solution of \ourmethod is again reported.



\begin{table}[!tbh]
\captionsetup{font=scriptsize}
\begin{centering}
\begin{tabular}{>{\centering}p{3.4cm}|>{\centering}p{1.7cm}>{\centering}p{2.3cm}>{\centering}p{1.7cm}>{\centering}p{1.7cm}}
\multicolumn{1}{c}{\textbf{\scriptsize{Problem Instance}}} & \multicolumn{4}{c}{\textbf{\scriptsize{}Runtime (seconds)}} \tabularnewline
\toprule
\scriptsize{Problem Size $(n; m)$} & {\scriptsize{}\ourmethod} & 
{\scriptsize{}T-CGAL} &
{\scriptsize{}10-Sketchy} & {\scriptsize{}100-Sketchy}  \tabularnewline
\midrule

\scriptsize{10,000; 120,000} &\scriptsize{\textbf{69.11/2}} & \scriptsize{*/.18e-01/561} & \scriptsize{*/.13e-01}  & \scriptsize{*/.25e-01}\tabularnewline

\scriptsize{10,000; 120,000} & {\scriptsize{\textbf{66.14/2}}} & \scriptsize{*/.54e-01/521} & \scriptsize{11112.00}  & \scriptsize{*/.80e-01}\tabularnewline

\scriptsize{10,000; 120,000} & {\scriptsize{\textbf{64.42/2}}} & {\scriptsize{*/.12e00/224}}& {\scriptsize{*/.31e-01}} &  {\scriptsize{*/.13e00}} \tabularnewline

\scriptsize{10,000; 120,000} &{\scriptsize{\textbf{99.98/2}}} & {\scriptsize{*/.28e-01/201}} & {\scriptsize{*/.13e00}} &  {\scriptsize{*/.26e-01}}  \tabularnewline

\hline 

\scriptsize{31,623; 379,476} & {\scriptsize{\textbf{620.82/3}}} & {\scriptsize{*/.29e00/1432}}& {\scriptsize{*/.77e-01}} &  {\scriptsize{*/.23e00}} \tabularnewline

\scriptsize{31,623; 379,476} & {\scriptsize{\textbf{982.34/2}}} & {\scriptsize{*/.23e00/729}} & {\scriptsize{*/.63e-01}} &  {\scriptsize{*/.93e00}}\tabularnewline


\scriptsize{31,623; 379,476} & {\scriptsize{\textbf{870.25/2}}} & {\scriptsize{*/.66e00/794}} & {\scriptsize{*/.65e-02}} &  {\scriptsize{*/.78e-01}}  \tabularnewline

\scriptsize{31,623; 379,476} & {\scriptsize{\textbf{712.09/2}}} & {\scriptsize{*/.10e+01/1280}} & {\scriptsize{*/.10e+01}} &  {\scriptsize{*/.82e00}}  \tabularnewline

\bottomrule
\end{tabular}
\par\end{centering}
\caption{Runtimes (in seconds) for the Phase Retrieval problem. A relative tolerance of $\epsilon=10^{-5}$ is set and a time limit of 14400 seconds (4 hours) is given.  An entry marked with $*/N$ means that the corresponding method finds an approximate solution with relative accuracy 
strictly larger than the desired accuracy in which case $N$
expresses the maximum of the three relative accuracies
in \eqref{eq:termination}. For entries corresponding to \ourmethod and T-CGAL, the number reported after the last forward slash indicates that the rank of that corresponding method's outputted solution.}\label{tab:PhaseRetMed}
\end{table}

\begin{table}[!tbh]
\captionsetup{font=scriptsize}
\begin{centering}
\begin{tabular}{>{\centering}p{3.4cm}|>{\centering}p{2.5cm}}
\multicolumn{1}{c}{\textbf{\scriptsize{Problem Instance}}} & \multicolumn{1}{c}{\textbf{\scriptsize{}Runtime (seconds)}} \tabularnewline
\toprule
\scriptsize{Problem Size $(n; m)$} & {\scriptsize{}\ourmethod} \tabularnewline
\midrule

\scriptsize{100,000; 1,200,000} & {\scriptsize{1042.92/4}}  \tabularnewline

\scriptsize{100,000; 1,200,000} & {\scriptsize{1147.46/3}} \tabularnewline

\scriptsize{100,000; 1,200,000} & {\scriptsize{929.67/5}} \tabularnewline

\scriptsize{100,000; 1,200,000} & {\scriptsize{939.23/5}} \tabularnewline

\hline

\scriptsize{316,228; 3,794,736} &\scriptsize{8426.94/5}  \tabularnewline

\scriptsize{316,228; 3,794,736} & {\scriptsize{2684.83/1}} \tabularnewline

\scriptsize{316,228; 3,794,736} & {\scriptsize{7117.31/6}} \tabularnewline

\scriptsize{316,228; 3,794,736} &{\scriptsize{7489.42/7}}  \tabularnewline

\hline

\scriptsize{3,162,278; 37,947,336} & {\scriptsize{40569.10/1}}  \tabularnewline

\bottomrule
\end{tabular}
\par\end{centering}
\caption{Runtimes (in seconds) for the Phase Retrieval problem. The number after the forward slash indicates the rank of \ourmethod's outputted solution. A relative tolerance of $\epsilon=10^{-5}$ is set.} \label{PhaseRetLarge}
\end{table}

Table~\ref{tab:PhaseRetMed} only compares \ourmethod against T-CGAL and Sketchy-CGAL since these are the only methods that take advantage of the fact that the linear maps $\mathcal{A}$ and $\mathcal{A}^*$ in the phase retrieval SDP can be evaluated efficiently  using the fast Fourier transform (FFT).
As seen from Table~\ref{tab:PhaseRetMed}, \ourmethod is the best performing method and the only method that can solve each instance to a relative accuracy of $10^{-5}$ within the time limit of 4 hours.
T-CGAL and Sketchy-CGAL were unable to solve most instances to the desired accuracy, often finding solutions with accuracies on the range of $10^{0}$ to $10^{-2}$ in $4$ hours. Sketchy-CGAL was also over 150 times slower than \ourmethod on the single instance that it was able to find a $10^{-5}$ accurate solution.

Since T-CGAL and Sketchy-CGAL did not perform well on the medium sized phase retrieval instances considered in Table~\ref{tab:PhaseRetMed}, computational results
for large sized phase retrieval instances are only presented 
for \ourmethod
in Table~\ref{PhaseRetLarge}.
The results presented in Table~\ref{PhaseRetLarge} show that \ourmethod solves a phase retrieval SDP instance with dimension pair $(n,m)\approx (10^5,10^6)$ in approximately $15$ minutes and also one with dimension pair $(n,m) \approx (10^{6},10^{7})$ in just $11$ hours.

\subsection{Matrix completion}\label{Matrix Completion}

Consider the problem of retrieving a low rank matrix $M \in \RR^{n_1 \times n_2}$, where $n_1 \leq n_2$,
by observing a subset of its entries: $M_{ij}$, $ij\in \Omega$.
A standard approach to tackle this problem is by considering the nuclear norm relaxation:
\[
\min_Y \quad \left\{\|Y\|_*\quad
:\quad Y_{ij} = M_{ij}, \ \forall\, ij \in \Omega,\quad
Y \in \RR^{n_1\times n_2}
\right \}
\]
The above problem can be rephrased as the following SDP:
\begin{align}
\label{eq:matcomp-sdp}
\min_{X} \quad
\Bigl\{ \,\frac12 \tr(X) \quad : \quad
X= \begin{pmatrix} W_1 & Y \\ Y^T & W_2 \end{pmatrix} \succeq 0, \quad
Y_{i,j} = M_{i,j} \; \forall\, ij \in \Omega,\quad
X \in \mathbb S^{n_1+n_2}(\RR)
\Bigr\}
\end{align}

\begin{table}[!tbh]
\captionsetup{font=scriptsize}
\begin{centering}
\begin{tabular}{>{\centering}p{3.0cm} >{\centering}p{1.0cm}|>{\centering}p{1.5cm}>{\centering}p{2.5cm}}
\multicolumn{2}{c}{\textbf{\scriptsize{Problem Instance}}} & \multicolumn{2}{c}{\textbf{\scriptsize{}Runtime (seconds)}} \tabularnewline
\toprule
\scriptsize{Problem Size $(n; m)$} & \scriptsize{$r$} & {\scriptsize{}\ourmethod} & {\scriptsize{}10-Sketchy}\tabularnewline
\midrule

\scriptsize{10,000; 828,931}  & \scriptsize{3} &\scriptsize{\textbf{321.81}} & \scriptsize{*/.81e00/.89e-02}\tabularnewline

\scriptsize{10,000; 828,931} & \scriptsize{3} & {\scriptsize{\textbf{332.54}}} & \scriptsize{*/.80e00/.82e-02}\tabularnewline

\scriptsize{10,000; 2,302,586} & \scriptsize{5} & {\scriptsize{\textbf{1117.60}}} & {\scriptsize{*/.92e00/.28e00}}\tabularnewline

\scriptsize{10,000; 2,302,586} & \scriptsize{5} & {\scriptsize{\textbf{1067.15}}} & {\scriptsize{*/.11e+01/ .41e00}} \tabularnewline

\hline 

\scriptsize{31,623; 2,948,996} &  \scriptsize{3} &{\scriptsize{\textbf{1681.03}}} & {\scriptsize{*/.81e00/.69e-02}}\tabularnewline

\scriptsize{31,623; 2,948,996} &  \scriptsize{3} & {\scriptsize{\textbf{1362.22}}} & {\scriptsize{*/.81e00/.82e-02}} \tabularnewline


\scriptsize{31,623; 8,191,654} &  \scriptsize{5} & {\scriptsize{\textbf{4740.48}}} & {\scriptsize{*/.90e00/.43e-01}} \tabularnewline

\scriptsize{31,623; 8,191,654} &  \scriptsize{5} &{\scriptsize{\textbf{5238.57}}} & {\scriptsize{*/.90e00/.84e-01}} \tabularnewline

\bottomrule
\end{tabular}
\par\end{centering}
\caption{Runtimes (in seconds) for the Matrix Completion problem. A relative tolerance of $\epsilon=10^{-5}$ is set and a time limit of 14400 seconds (4 hours) is given.  An entry marked with $*/N_1/N_2$ means that the implicit solution corresponding to 10-Sketchy had relative accuracy 
strictly larger than the desired accuracy in which case $N_1$ (resp. $N_2$)
expresses the maximum of the three relative accuracies
in \eqref{eq:termination} of its computed (resp. implicit) solution.}\label{tab:MatrixCompMed}
\end{table}

\begin{table}[!tbh]
\captionsetup{font=scriptsize}
\begin{centering}
\begin{tabular}{>{\centering}p{3.5cm} >{\centering}p{1.0cm}|>{\centering}p{2.5cm}}
\multicolumn{2}{c}{\textbf{\scriptsize{Problem Instance}}} & \multicolumn{1}{c}{\textbf{\scriptsize{}Runtime (seconds)}} \tabularnewline
\toprule
\scriptsize{Problem Size $(n; m)$} & \scriptsize{$r$}  & {\scriptsize{}\ourmethod} \tabularnewline
\midrule

\scriptsize{75,000; 3,367,574} &\scriptsize{2} & {\scriptsize{3279.85}}  \tabularnewline

\scriptsize{75,000; 7,577,040} &\scriptsize{3} & {\scriptsize{5083.68}} \tabularnewline

\hline 

\scriptsize{100,000; 4,605,171} &\scriptsize{2} & {\scriptsize{2872.44}} \tabularnewline

\scriptsize{100,000; 10,361,633} &\scriptsize{3} & {\scriptsize{6048.63}} \tabularnewline

\hline

\scriptsize{150,000; 7,151,035} &\scriptsize{2} & {\scriptsize{10967.74}} \tabularnewline

\scriptsize{150,000; 16,089,828} &\scriptsize{3} & {\scriptsize{14908.08}} \tabularnewline

\hline 

\scriptsize{200,000; 9,764,859} &\scriptsize{2} & {\scriptsize{13454.12}} \tabularnewline

\scriptsize{200,000; 21,970,931} &\scriptsize{3} & {\scriptsize{28021.56}} \tabularnewline

\bottomrule
\end{tabular}
\par\end{centering}
\caption{Runtimes (in seconds) for the Matrix Completion problem. A relative tolerance of $\epsilon=10^{-5}$ is set.}\label{MatrixComplLarge}
\end{table}

The nuclear norm relaxation was introduced in \cite{fazel2002matrix}.
It was shown in \cite{candes2012exact} it provably completes the matrix when $m = |\Omega|$ is sufficiently large
and the indices of the observations are independent and uniform.

Similar to the SDP formulation of phase retrieval in subsection~\ref{Phase Retrieval SDP}, the SDP formulation of matrix completion does not include a trace bound,
but the objective function is a multiple of the trace.
Hence, any bound on the optimal value leads to a trace bound.
In particular, a valid trace bound is $2 \|Y_0\|_*$,
where $Y_0 \in \RR^{n_1\times n_2}$ is the trivial completion, which agrees with $M_{ij}$ in the observed entries and has zeros everywhere else.
However, computing the nuclear norm of $Y_0$ is expensive, as it requires an SVD decomposition. 
In the experiments the inexpensive, though weaker, bound 
$\tau = 2 \sqrt{n_1} \|Y_0\|_F$ is used instead.

The matrix completion instances are generated randomly, using the following procedure.
Given $r \leq n_1 \leq n_2$,
the hidden solution matrix $M$ is the product $U V^T$,
where the matrices $U \in \RR^{n_1 \times r}$ and $V \in \RR^{n_2 \times r}$ have independent standard Gaussian random variables as entries.
Afterwards, $m$ independent and uniformly random observations from $M$ are taken.
The number of observations is
$m = \lceil \gamma\ r (n_1 + n_2 - r) \rceil$
where
$\gamma=r\log(n_1+n_2)$
is the oversampling ratio.

Tables~\ref{tab:MatrixCompMed} and~\ref{MatrixComplLarge} present the results of the computational experiments performed on the matrix completion~SDP. All instances are generated randomly using the procedure described in the previous paragraph. Table~\ref{tab:MatrixCompMed} compares \ourmethod against $10$-Sketchy on medium sized matrix completion instances, i.e., the dimension $n=n_1+n_2$ is either 10000 or 31623. A time limit of 14400 seconds (4 hours) is given. On instances where $10$-Sketchy did not terminate within the time limit, the relative accuracy of both of its computed and implicit solutions are now reported. An entry marked with $*/N_1/N_2$ means that, within 4 hours, the implicit solution corresponding to 10-Sketchy had relative accuracy 
strictly larger than the desired accuracy in which case $N_1$ (resp. $N_2$)
expresses the maximum of the three relative accuracies
in \eqref{eq:termination} of its computed (resp. implicit) solution. Table~\ref{MatrixComplLarge} solely presents the performance of \ourmethod on larger sized matrix completion instances, i.e., with dimension $n$ greater than or equal to 75000.

Table~\ref{tab:MatrixCompMed} only compares \ourmethod against 10-Sketchy due to 10-Sketchy's low memory requirements and its superior/comparable performance to 100-Sketchy and T-CGAL on previous problem classes. As seen from Table~\ref{tab:MatrixCompMed}, \ourmethod is the best performing method and the only method that can solve each instance to a relative accuracy of $10^{-5}$ within the time limit of 4 hours. 
10-Sketchy is unable to solve a single instance to the desired accuracy, often finding solutions with accuracies on the range of $10^{0}$ to $10^{-2}$ in 4 hours.

Since 10-Sketchy did not perform well on the medium sized matrix completion instances considered in Table~\ref{tab:MatrixCompMed}, computational results
for large sized matrix completion instances are only presented 
for \ourmethod
in Table~\ref{MatrixComplLarge}.
The results presented in Table~\ref{MatrixComplLarge} show that \ourmethod solves a matrix completion instance with dimension pair $(n,m)\approx (10^5,10^6)$ in approximately $48$ minutes and also one with dimension pair $(n,m) \approx (10^{5},10^{7})$ in just $1.7$ hours.

\appendix

\section{Technical Results}

The following section states some useful facts about the spectraplex $\Delta^{n}$ defined in \eqref{Delta Definition}. The first result characterizes the optimal solution a given linear form over the set $\Delta^{n}$. The second result characterizes its $\epsilon$-normal cone.

\subsection{Characterization of Optimal Solution of Linear Form over Spectraplex}

Consider the problem
\begin{equation}\label{subproblem appendix}
\min_U \{G \bullet U: U \in \Delta^{n}\},
\end{equation}
where $\Delta^{n}$ is as in \eqref{Delta Definition}.
The optimality condition for \eqref{subproblem appendix} implies that $Z$ is an optimal solution of
\eqref{subproblem appendix} if and only if there exists
$\theta \in \R$ such that
the pair $(Z,\theta)$ satisfies
\begin{equation}\label{optimality fw sub App}
G+ \theta I \succeq 0 ,
\quad (G + \theta I) \bullet Z = 0, \quad \theta \ge 0, \quad \theta (\tr(Z)-1)=0.
\end{equation}
The following proposition explicitly characterizes solutions of the above problem using the special structure of $\Delta^{n}$.

\begin{prop}\label{Pair Optimal FW subproblem}
Let $(\lambda_{\min},v_{\min})$ be a minimum eigenpair of $G$ and define
\begin{equation}\label{theta X}
\theta^{F} = \max\{-\lam_{\min}(G) , 0\}, \quad Z^F= \begin{cases}
     v_{\min}v_{\min}^{T} & \text{ if $\theta^{F}>0$,}\\
        0 & \text{ otherwise}.
\end{cases}
\end{equation}
Then the pair $(Z,\theta)=(Z^F,\theta^F)$ satisfies  \eqref{optimality fw sub App}.
As a consequence,
$Z^F$ is an optimal solution of \eqref{subproblem appendix}.
\end{prop}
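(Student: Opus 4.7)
The plan is to verify directly that the pair $(Z^F,\theta^F)$ defined in \eqref{theta X} satisfies each of the four conditions of \eqref{optimality fw sub App}, and then invoke the fact that these conditions are sufficient for optimality of the convex problem \eqref{subproblem appendix}.

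First I would split the analysis into two cases according to the sign of $\lambda_{\min}(G)$. In the easy case where $\lambda_{\min}(G) \ge 0$, relation \eqref{theta X} gives $\theta^F = 0$ and $Z^F = 0$, so $G + \theta^F I = G \succeq 0$, and every remaining condition in \eqref{optimality fw sub App} is trivial (the complementarity identity $(G+\theta^F I)\bullet Z^F = 0$ holds because $Z^F = 0$, and $\theta^F(\tr Z^F - 1) = 0$ holds because $\theta^F = 0$).

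In the nontrivial case where $\lambda_{\min}(G) < 0$, we have $\theta^F = -\lambda_{\min}(G) > 0$ and $Z^F = v_{\min} v_{\min}^T$ with $\|v_{\min}\| = 1$. The positive semidefiniteness $G + \theta^F I \succeq 0$ follows because the eigenvalues of $G + \theta^F I$ are exactly $\lambda_i(G) - \lambda_{\min}(G) \ge 0$. For the complementarity identity, I would compute
\begin{equation*}
(G + \theta^F I) \bullet (v_{\min} v_{\min}^T) = v_{\min}^T G v_{\min} + \theta^F \|v_{\min}\|^2 = \lambda_{\min}(G) + \theta^F = 0,
\end{equation*}
using that $(\lambda_{\min}(G), v_{\min})$ is an eigenpair. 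Finally, $\tr(Z^F) = \|v_{\min}\|^2 = 1$, so $\theta^F(\tr Z^F - 1) = 0$ holds regardless of the sign of $\theta^F$.

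For the concluding sentence of the proposition, I would observe that \eqref{subproblem appendix} is the minimization of a linear (hence convex) function over the convex set $\Delta^n$, and that \eqref{optimality fw sub App} is simply the KKT system for this problem after dualizing the constraint $\tr Z \le 1$ with multiplier $\theta$ (the cone constraint $Z \succeq 0$ being handled by the nonnegativity of $G + \theta I$ and the complementarity $(G+\theta I)\bullet Z = 0$). Since KKT is sufficient for optimality in the convex setting, the verifications above imply that $Z^F$ is optimal for \eqref{subproblem appendix}. I do not anticipate any real obstacle here; the only subtlety is making sure the complementarity identity is written in the correct form so that the case $Z^F = 0$ and the case $Z^F = v_{\min}v_{\min}^T$ are both covered seamlessly.
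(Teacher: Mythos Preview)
Your proposal is correct and follows essentially the same approach as the paper: both split into the two cases $\theta^F = 0$ and $\theta^F > 0$ (equivalently, $\lambda_{\min}(G) \ge 0$ versus $\lambda_{\min}(G) < 0$) and verify each of the four conditions in \eqref{optimality fw sub App} directly. Your write-up is in fact a bit more explicit than the paper's, spelling out the eigenvalue-shift argument for $G+\theta^F I \succeq 0$ and the KKT-sufficiency justification for the final sentence.
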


\begin{proof}
Consider the pair $(Z^F,\theta^F)$ as defined in \eqref{theta X}. It is immediate from the definition of $\theta^{F}$ that $\theta^{F} \geq 0$. Consider now two cases. For the first case, suppose that $\theta^{F}=0$ and hence $Z^F=0$. It then follows immediately that the pair $(Z^F,\theta^F)$ satisfies the optimality conditions in \eqref{optimality fw sub App}. 

For the second case, suppose that $\theta^{F}>0$. Thus $\theta^{F}=-\lam_{\min}(G)$ and $Z^{F}=v_{\min}v_{\min}^{T}$. Clearly, then $G+\theta^{F}I\succeq 0$ and $\tr(Z^{F})=1$ and hence the pair $(Z^{F},\theta^{F})$ satisfies the first and fourth relations in \eqref{optimality fw sub App}. The fact that $(-\theta^{F}, v_{\min})$ is an eigenpair implies that $G\bullet Z^{F}=-\theta^F$ and hence the pair $(Z^{F},\theta^{F})$ also satisfies the second relation in \eqref{optimality fw sub App}.

\end{proof}
\subsection{Characterization of $\epsilon$-Normal Cone of Spectraplex}

\begin{prop}\label{Normal Cone Characterization Appendix}
Let $Z \in \Delta^{n}$ and $\theta=\theta^{F}$ where $\theta^{F}$ is as in \eqref{theta X}. Then:
\begin{itemize}
\item[a)] there hold
   \begin{equation}\label{PSD dual Appendix}
 \theta \ge 0, \quad
       G+\theta I \succeq 0;
    \end{equation}
\item[b)] for any $\epsilon>0$,
the inclusion holds
\begin{equation}\label{Normal Cone inclusion}
0 \in G + N^{\epsilon}_{\Delta^n}(Z)
\end{equation}
if and only if
\begin{equation}\label{Epsilon Normal Cone}
   G\bullet Z +\theta \leq \epsilon.
   \end{equation}
\end{itemize}
\end{prop}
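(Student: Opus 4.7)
Part (a) is essentially a tautology given the definition of $\theta = \theta^F$ in \eqref{theta X}: nonnegativity is built into the $\max\{\cdot,0\}$ operation, and the second assertion amounts to noting that $\lambda_{\min}(G + \theta I) = \lambda_{\min}(G) + \theta \geq 0$ in both cases ($\theta = -\lambda_{\min}(G)$ or $\theta = 0$ which forces $\lambda_{\min}(G) \geq 0$). So I would dispense with (a) in one short sentence.

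For part (b), the plan is to translate the membership $0 \in G + N^{\epsilon}_{\Delta^n}(Z)$ back through the definition of the $\epsilon$-normal cone. Namely, $-G \in N^{\epsilon}_{\Delta^n}(Z)$ is equivalent, by definition, to $\langle -G, U - Z\rangle \leq \epsilon$ for all $U \in \Delta^n$, which rearranges to
\begin{equation*}
G \bullet Z - \min_{U \in \Delta^n} G \bullet U \;\leq\; \epsilon.
\end{equation*}
So the whole proof reduces to identifying this minimum.

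The key computation is that $\min_{U \in \Delta^n} G \bullet U = -\theta$. This is exactly the content of Proposition~\ref{Pair Optimal FW subproblem}: the pair $(Z^F,\theta^F)$ defined in \eqref{theta X} solves \eqref{subproblem appendix}, so the optimal value equals $G \bullet Z^F$. A quick case split finishes things: if $\theta = 0$ then $Z^F = 0$ and $G \bullet Z^F = 0 = -\theta$; if $\theta > 0$ then $Z^F = v_{\min} v_{\min}^T$ and, since $(v_{\min}, \lambda_{\min}(G)) = (v_{\min}, -\theta)$ is an eigenpair, $G \bullet Z^F = v_{\min}^T G v_{\min} = -\theta$. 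Substituting $\min_U G \bullet U = -\theta$ into the inequality above yields \eqref{Epsilon Normal Cone}, and the equivalence in both directions follows since each step was a biconditional.

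I do not expect any real obstacle here; the whole argument is bookkeeping that leverages the earlier Proposition~\ref{Pair Optimal FW subproblem} to evaluate the support function of $\Delta^n$ at $-G$. The only mildly subtle point is recognizing that the $\epsilon$-normal cone condition is equivalent to a support-function inequality, so invoking Proposition~\ref{Pair Optimal FW subproblem} at the right moment is the whole trick.
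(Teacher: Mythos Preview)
Your proposal is correct and takes essentially the same approach as the paper: both use Proposition~\ref{Pair Optimal FW subproblem} to identify $G\bullet Z^F=-\theta$ and then read off the equivalence from the definition of the $\epsilon$-normal cone. The only cosmetic difference is that you handle both directions simultaneously via the support-function identity $\min_{U\in\Delta^n} G\bullet U=-\theta$, whereas the paper proves the reverse implication by the direct decomposition $-G\bullet(U-Z)=G\bullet Z-(G+\theta I)\bullet U+\theta\,\tr(U)$ and then uses $G+\theta I\succeq 0$ and $\tr(U)\le 1$; these amount to the same computation.
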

\begin{proof}
(a) It follows immediately from the definition of $\theta^{F}$ in \eqref{theta X} and the fact that $\theta=\theta^{F}$ that the two relations in \eqref{PSD dual Appendix} hold.

(b) Proposition~\ref{Pair Optimal FW subproblem} implies that the pair $(Z^{F},\theta^{F})$ satisfies the relation $G\bullet Z^{F}=-\theta^{F}$ where $(Z^{F},\theta^{F})$ is as in \eqref{theta X}. It then follows from this relation, the fact that $\theta=\theta^{F}$, and the inclusions $-G \in N^\epsilon_{\Delta^{n}}(Z)$ and $Z^{F} \in \Delta^{n}$ that 
    \begin{align*}
        \epsilon \geq -G \bullet (Z^F - Z)
        = G\bullet Z+\theta.
    \end{align*}
For the other direction, suppose the pair $(Z, \theta)$ satisfies \eqref{Epsilon Normal Cone} and let $U \in \Delta^{n}$. It then follows that
\begin{align*}
        -G \bullet (U - Z)
        &= G \bullet Z-(G+\theta I)\bullet U+\theta \tr(U)\\
        &\leq G\bullet Z+\theta-(G+\theta I)\bullet U\\
        & \leq \epsilon -0=\epsilon.
    \end{align*}
Hence, $-G \in N^{\epsilon}_{\Delta^n}(Z)$, proving the result.
\end{proof}

\section{ADAP-FISTA Method}\label{ADAP-FISTA}
Let $\mathbb E$ denote a finite-dimensional inner product real vector space with inner product and induced norm denoted by $\inner{\cdot}{\cdot}$ and $\|\cdot\|$, respectively. Also, let $\psi_n : \mathbb{E} \to (-\infty, \infty]$ be a proper closed convex function whose domain $\dom \psi_{n}:=\mathcal N \subseteq \mathbb E$, has finite diameter $D_{\psi_{n}}$.

ADAP-FISTA considers the following problem
\begin{equation}\label{OptProb1}
\min\{\psi(u):= \psi_s(u) + \psi_n(u) : u \in \mathbb E\}
\end{equation}
where $\psi_s$ is assumed to satisfy the following assumption:

\begin{itemize}
\item[\textbf{(B1)}]
$\psi_s$ is a real-valued function that is differentiable on $\mathbb E$ and
there exists $\bar L \geq 0$ such that
\begin{equation}\label{ineq:uppercurvature1}
\|\nabla \psi_s(u') -  \nabla \psi_s(u)\|\le \bar L \|u'-u\| \quad \forall u,u' \in \tilde B,
\end{equation}
where 
\begin{equation}\label{Closed Ball}
\tilde B:= \mathcal N+\bar B_{D_{\psi_{n}}}
\end{equation}
and
$\bar B_{l}:=\{u: \|u\|\leq l\}$ is the closed unit ball centered at $0$ with radius $l$.
\end{itemize}




We now describe the type of approximate solution that ADAP-FISTA aims to find. 

\vgap

\noindent{\bf Problem:}
Given $\psi$ satisfying the above assumptions,
a point $x_0 \in \mathcal N$, a parameter $\sigma\in(0,\infty)$,
the problem is to find a pair $(y,r) \in \mathcal N \times \mathbb E$ such that
\begin{gather}
    \|r\|\leq \sigma \|y-x_0\|, \quad r \in \nabla \psi_s(y)+\partial \psi_n(y).\label{acg problem}
\end{gather}
We are now ready to present the ADAP-FISTA algorithm below. 

\noindent\begin{minipage}[t]{1\columnwidth}%
\rule[0.5ex]{1\columnwidth}{1pt}

\noindent \textbf{ADAP-FISTA Method}

\noindent \rule[0.5ex]{1\columnwidth}{1pt}%
\end{minipage}
\noindent \textbf{Universal Parameters}: $\sigma>0$ and $\chi \in (0,1)$.

\noindent \textbf{Input}: initial point $x_0 \in \mathcal N$,  scalars $\mu >0$, $L_0>\mu$, and function pair $(\psi_s,\psi_n)$.
\begin{itemize}
\item[{\bf 0.}] set $y_0=x_0 $, $ A_0=0 $, $\tau_0=1$, and $i=0$;
\item[{\bf 1.}] Set $L_{i+1}=L_i$;
\item[{\bf 2.}]	Compute
\begin{equation}\label{def:ak-sfista1}
a_i=\frac{\tau_{i}+\sqrt{\tau_{i}^2+4\tau_{i} A_{i}(L_{i+1}-\mu)}}{2(L_{i+1}-\mu)}, \quad \tx_{i}=\frac{A_{i}y_{i}+a_i x_i}{A_i+a_i},
\end{equation}
\begin{equation}
y_{i+1}:=\underset{u\in \mathcal N}\argmin\left\lbrace q_i (u;\tx_{i},L_{i+1}) 
:= \ell_{\psi_s}(u;\tilde x_{i}) + \psi_n(u) + \frac{L_{i+1}}{2}\|u-\tx_{i}\|^2\right\rbrace,
\label{eq:ynext-sfista1}
\end{equation}
If the inequality
\begin{equation}\label{ineq check}
\ell_{\psi_s}(y_{i+1};\tilde x_{i})+\frac{(1-\chi) L_{i+1}}{4}\|y_{i+1}-\tilde x_{i}\|^2\geq \psi_s(y_{i+1})
\end{equation}
holds go to step~3;
else set $L_{i+1} \leftarrow 2L_{i+1} $ and repeat step~2;
\item[{\bf 3.}] Compute
\begin{align}
A_{i+1}&=A_i+a_i, \quad \tau_{i+1}= \tau_i + a_i\mu,  \label{eq:taunext-sfista1} \\
s_{i+1}&=(L_{i+1}-\mu)(\tilde x_i-y_{i+1}),\label{eq:sk}\\
\quad x_{i+1}&= \frac{1}{\tau_{i+1}} \left[\mu a_i y_{i+1} + \tau_i x_i-a_is_{i+1} \right];\label{eq:xnext-sfista1}
\end{align}
	
\item[{\bf 4.}]  If the inequality
\begin{align}
&\|y_{i+1}-x_{0}\|^{2} \geq \chi A_{i+1}L_{i+1} \|y_{i+1}-\tilde x_{i}\|^2, \label{ineq: ineq 1}
\end{align}
holds, then  go to step 5; otherwise, stop with {\bf failure};

\item[{\bf 5.}] Compute 
\begin{equation}\label{def:uk}
v_{i+1}=\nabla \psi_s(y_{i+1})-\nabla \psi_s(\tilde x_{i})+L_{i+1}(\tilde x_i-y_{i+1}).   \end{equation}
If the inequality
\begin{equation}\label{u sigma criteria}
\|v_{i+1}\| \leq \sigma \|y_{i+1}-x_0\|
\end{equation}
holds then stop with {\bf success} and output $(y,v,L):=(y_{i+1},v_{i+1}, L_{i+1})$; otherwise,
 $ i \leftarrow i+1 $ and go to step~1.
\end{itemize}
\noindent \rule[0.5ex]{1\columnwidth}{1pt}

The ADAP-FISTA method was first developed in \cite{SujananiMonteiro}. The method assumes that the gradient of $\psi_s$ is Lipschitz continuous on all of $\mathbb E$ since it requires Lipchitz continuity of the gradient at the sequences of points $\{\tilde x_i\}$ and $\{y_i\}$. This assumption can be relaxed to as in (B3) by showing that the sequence $\{\tilde x_i\}$ lies in the set $\tilde B$ defined in \eqref{Closed Ball}. The following lemma establishes this result inductively by using several key lemmas which can be found in \cite{SujananiMonteiro}. 
\begin{lem}\label{Key proposition}
    Let $m \geq 1$ and suppose ADAP-FISTA generates sequence $\{\tilde x_{i}\}^{m}_{i=0} \subseteq \tilde B$. Then, the following statements hold:
    \begin{itemize}
        \item[(a)] $L_0\leq L_{i+1}\leq \max\{L_0, 4\bar L/(1-\chi)\}$ for any $i\in\{0,\ldots m\}$;
        \item[(b)] for any $x \in \mathcal N$, the relation
        \begin{align}\label{key convergence inequality}
        A_i[\psi(y_{m+1}) - \psi(x) ] + \frac{\tau_{m+1}}2 \|x-x_{m+1}\|^2 \le
        \frac12 \|x-x_0\|^2 - \frac{\chi}{2} \sum_{i=0}^{m} A_{i+1}L_{i+1} \| y_{i+1} - \tx_i \|^2
        \end{align}
        holds;
        \item[(c)] $\tilde x_{m+1} \in \tilde B$.
    \end{itemize}
As a consequence, the entire sequence $\{\tilde x_{i}\} \subseteq \tilde B$.
\end{lem}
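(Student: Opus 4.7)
The plan is to verify (a), (b), (c) in order under the inductive hypothesis $\{\tilde x_i\}_{i=0}^m\subseteq \tilde B$ and then obtain the ``As a consequence'' claim by induction on $m$, with trivial base case $\tilde x_0 = x_0 \in \mathcal N \subseteq \tilde B$.

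For (a), the key observation is that once $L_{i+1} \geq 2\bar L/(1-\chi)$, the test \eqref{ineq check} is automatically satisfied: by (B1), $\nabla \psi_s$ is $\bar L$-Lipschitz on $\tilde B$, and since $\tilde x_i \in \tilde B$ by hypothesis and $y_{i+1}\in\mathcal N\subseteq \tilde B$, the standard descent lemma gives $\psi_s(y_{i+1}) \leq \ell_{\psi_s}(y_{i+1};\tilde x_i)+(\bar L/2)\|y_{i+1}-\tilde x_i\|^2$, which implies \eqref{ineq check} whenever $(1-\chi)L_{i+1}/4 \geq \bar L/2$. The step~2 doubling rule therefore caps $L_{i+1}$ at $\max\{L_0,\,4\bar L/(1-\chi)\}$, and the lower bound follows from $L_{i+1}\geq L_i\geq L_0$. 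Part (b) is the standard accelerated-gradient potential argument: combining the certified descent \eqref{ineq check}, the first-order optimality of $y_{i+1}$ in \eqref{eq:ynext-sfista1} (which furnishes a three-point inequality), and a telescoping of the recursions \eqref{def:ak-sfista1}--\eqref{eq:xnext-sfista1} yields \eqref{key convergence inequality}. The inductive hypothesis is used only to guarantee that every gradient evaluation is performed inside $\tilde B$, so the derivation carried out in \cite{SujananiMonteiro} applies verbatim.

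For (c), the main idea is to specialize (b) with $x = y_{m+1} \in \mathcal N$. This choice annihilates the $\psi(y_{m+1}) - \psi(x)$ term, and dropping the nonpositive sum on the right-hand side of \eqref{key convergence inequality} gives
\begin{equation*}
\tfrac{\tau_{m+1}}{2}\|y_{m+1} - x_{m+1}\|^2 \;\leq\; \tfrac{1}{2}\|y_{m+1} - x_0\|^2 \;\leq\; \tfrac{1}{2}D_{\psi_n}^2,
\end{equation*}
where the last inequality uses $y_{m+1},x_0 \in \mathcal N$ together with the definition of $D_{\psi_n}$. Since $\tau_{i+1} = \tau_i + a_i\mu$ with $a_i,\mu>0$ yields $\tau_{m+1} \geq \tau_0 = 1$, we conclude $\|y_{m+1} - x_{m+1}\|\leq D_{\psi_n}$, hence $x_{m+1}\in\mathcal N+\bar B_{D_{\psi_n}} = \tilde B$. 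Because $\tilde B$ is convex (a Minkowski sum of two convex sets) and \eqref{def:ak-sfista1} writes $\tilde x_{m+1}$ as a convex combination of $y_{m+1}\in \mathcal N\subseteq\tilde B$ and $x_{m+1}\in\tilde B$, it follows that $\tilde x_{m+1}\in\tilde B$. Inducting on $m$ then delivers the final ``as a consequence'' claim. The only non-routine step is the choice $x = y_{m+1}$ in (b): it collapses the accelerated convergence inequality into the exact boundedness bound that the argument needs, and everything else is routine bookkeeping.
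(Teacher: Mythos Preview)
Your proposal is correct and follows essentially the same approach as the paper's own proof: both use the descent lemma on $\tilde B$ to bound $L_{i+1}$ in (a), defer to \cite{SujananiMonteiro} for the potential argument in (b), and for (c) specialize \eqref{key convergence inequality} at $x=y_{m+1}$ together with $\tau_{m+1}\ge 1$ to obtain $\|y_{m+1}-x_{m+1}\|\le D_{\psi_n}$, then conclude via the convex-combination form of $\tilde x_{m+1}$ and induction.
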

\begin{proof}
(a) Let $i \in \{0, \ldots m\}.$ Clearly $y_{i+1} \in \tilde B$ since the definition of $\tilde B$ in \eqref{Closed Ball} implies that $\tilde B\supseteq \mathcal N$. Now, using the facts that $\tilde x_{i}\in \tilde B$ and $y_{i+1} \in \tilde B$, assumption (B1) implies that $\nabla \psi_{s}$ is Lipschitz continuous at these points. The proof of (a) is then identical to the one of Lemma A.3(b) in \cite{SujananiMonteiro}. 

(b) Clearly, since ADAP-FISTA generates sequence $\{\tilde x_i\}_{i=0}^{m}$, its loop in step 2 always terminates during its first $m$ iterations. Hence, ADAP-FISTA also generates sequences $\{y_i\}_{i=0}^{m+1}$ and $\{x_i\}_{i=0}^{m+1}$. The proof of relation \eqref{key convergence inequality} then follows from this observation, (a), and by using similar arguments to the ones developed in Lemmas~A.6-A.10 of \cite{SujananiMonteiro}.

(c) It follows from the fact that $\tau_{m+1} \geq 1$, relation \eqref{key convergence inequality} with $x=y_{m+1}$, and the definition of $D_{\psi_{n}}$ that $\|y_{m+1}-x_{m+1}\|\leq \|y_{m+1}-x_0\| \le D_{\psi_{n}}$. This relation, the fact that $y_{m+1} \in \mathcal N$, and the definition of $\tilde B$ in \eqref{Closed Ball} then imply that 
\begin{align*}
x_{m+1} \subseteq \mathcal N + \bar B_{D_{\psi_{n}}}=\tilde B.
\end{align*}
The result then immediately follows from the fact that $\tilde x_{m+1}$ is a convex combination of $x_{m+1}$ and $y_{m+1}$ and that $\tilde B$ is a convex set.




The last statement in Proposition~\ref{Key proposition} follows immediately from (c) and a simple induction argument.
\end{proof}

We now present the main convergence results of ADAP-FISTA, whose proofs can be found in \cite{SujananiMonteiro}. 
Proposition~\ref{prop:nest_complex1} below gives an
iteration complexity bound regardless if ADAP-FISTA terminates with success or failure and shows that if ADAP-FISTA successfully stops, then it obtains a stationary solution of \eqref{OptProb1} with respect to a relative error criterion. It also shows that ADAP-FISTA always stops successfully
whenever $\psi_s$ is $\mu$-strongly convex.

\begin{proposition}\label{prop:nest_complex1} 
The following statements about ADAP-FISTA hold:
\begin{itemize}
\item[(a)] if
$L_0 = {\cal O}(\bar L)$, it
always stops (with either success or failure) in at most 
\[\mathcal O_1\left(\sqrt{\frac{\bar L}{\mu}}\log^+_1 (\bar L) \right)\] 
iterations/resolvent evaluations;
\item[(b)] if it stops successfully, it terminates with a triple
$(y,v,L) \in \mathcal N \times \mathbb E \times \RR_{++}$ satisfying 
\begin{align}
&v \in \nabla \psi_s(y)+\partial \psi_n(y), \quad \|v\|\leq \sigma\|y-x_0\|, \quad L \leq \max\{L_0, 4\bar L/(1-\chi)\} \label{u inclusion 1};
\end{align}
\item[(c)] if $\psi_s$ is $\mu$-convex on $\mathcal N$, then ADAP-FISTA always terminates with success and
its output
$(y,v, L)$, in addition
to satisfying
\eqref{u inclusion 1}
also  satisfies
the inclusion $v \in \partial (\psi_s+\psi_n)(y)$.
\end{itemize}
\end{proposition}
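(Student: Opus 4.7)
The plan is to treat the three parts in the order (b), (a), (c), since each draws on progressively more of the structure laid down in Lemma~\ref{Key proposition}. The core facts I will lean on are: Lemma~\ref{Key proposition}(a) gives a uniform bound $L_{i+1}\le \max\{L_0,4\bar L/(1-\chi)\}$, so only finitely many line-search doublings can occur across the whole run; Lemma~\ref{Key proposition}(b) gives the potential-type estimate that will drive both the outer complexity and the non-failure guarantee in the strongly convex case; and Lemma~\ref{Key proposition}(c) has already ensured that all iterates for which we invoke the Lipschitz inequality \eqref{ineq:uppercurvature1} lie inside $\tilde B$.

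For part (b), I would argue directly. The optimality condition for the prox subproblem \eqref{eq:ynext-sfista1} gives $-\nabla \psi_s(\tilde x_i)-L_{i+1}(y_{i+1}-\tilde x_i)\in \partial \psi_n(y_{i+1})$, so adding $\nabla \psi_s(y_{i+1})$ to both sides and comparing with \eqref{def:uk} yields $v_{i+1}\in \nabla \psi_s(y_{i+1})+\partial \psi_n(y_{i+1})$. The second relation is the success test \eqref{u sigma criteria}, and the bound on $L$ is just Lemma~\ref{Key proposition}(a). For part (a), I split the work into inner and outer iterations. The line search in Step 2 doubles $L_{i+1}$ at most $\lceil\log_2(\max\{L_0,4\bar L/(1-\chi)\}/L_0)\rceil=\mathcal O(\log_1^+\bar L)$ times across the entire run. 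For the outer count, I assume the method has not yet stopped; then \eqref{ineq: ineq 1} holds, so $\|y_{i+1}-\tilde x_i\|\le \|y_{i+1}-x_0\|/\sqrt{\chi A_{i+1}L_{i+1}}$, and from \eqref{def:uk} together with \eqref{ineq:uppercurvature1} one obtains $\|v_{i+1}\|\le 2L_{i+1}\|y_{i+1}-\tilde x_i\|$. Combining, $\|v_{i+1}\|\le 2\sqrt{L_{i+1}/(\chi A_{i+1})}\,\|y_{i+1}-x_0\|$, so the success criterion \eqref{u sigma criteria} fires as soon as $A_{i+1}\ge 4L_{i+1}/(\chi\sigma^2)$. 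The standard accelerated growth bound $A_{i+1}\ge \Omega\bigl((1+\sqrt{\mu/L_{i+1}})^{2(i+1)}/L_{i+1}\bigr)$ implied by the recursion \eqref{def:ak-sfista1}–\eqref{eq:taunext-sfista1} then gives the advertised $\mathcal O(\sqrt{\bar L/\mu}\log_1^+\bar L)$ bound, regardless of which of the two stopping flags is raised.

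For part (c), I treat the two assertions separately. When $\psi_s$ is $\mu$-strongly convex, $\psi$ is convex and proper on $\mathcal N$, so I can take $x=y_{m+1}$ in the convergence inequality \eqref{key convergence inequality} of Lemma~\ref{Key proposition}(b); the $A_{m+1}(\psi(y_{m+1})-\psi(y_{m+1}))$ term vanishes and, after dropping the nonnegative $\tau_{m+1}\|y_{m+1}-x_{m+1}\|^2/2$ term, one gets $\chi\sum_{i=0}^{m} A_{i+1}L_{i+1}\|y_{i+1}-\tilde x_i\|^2\le \|y_{m+1}-x_0\|^2$, and in particular the last summand alone satisfies $\chi A_{m+1}L_{m+1}\|y_{m+1}-\tilde x_m\|^2\le \|y_{m+1}-x_0\|^2$, which is exactly \eqref{ineq: ineq 1}. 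Hence Step 4 never sends the algorithm to failure, and combined with part (a) it must terminate with success. The extra inclusion $v\in\partial(\psi_s+\psi_n)(y)$ is then immediate: convexity and differentiability of $\psi_s$ give $\partial\psi_s(y)=\{\nabla\psi_s(y)\}$, and the Moreau–Rockafellar sum rule (applicable since $\psi_s$ is finite everywhere) gives $\partial(\psi_s+\psi_n)(y)=\nabla\psi_s(y)+\partial\psi_n(y)\ni v$.

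The step I expect to be the main obstacle is the outer-iteration count in part (a), specifically making the accelerated growth estimate on $A_{i+1}$ robust to the adaptive line search (so that all of the $L_{i+1}$ factors can be uniformly replaced by $\max\{L_0,4\bar L/(1-\chi)\}$ without losing the $\sqrt{\bar L/\mu}$ factor) and to the fact that $\mu$ may be a formal placeholder rather than a true strong-convexity modulus of $\psi_s$; carrying the standard Nesterov telescoping through with the adaptive coefficients $\tau_i$ from \eqref{eq:taunext-sfista1} while keeping track of non-convex line-search steps will be the most delicate bookkeeping.
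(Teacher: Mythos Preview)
The paper does not prove Proposition~\ref{prop:nest_complex1} in-text; immediately before stating it, the paper writes that the proofs ``can be found in \cite{SujananiMonteiro}'' and leaves it at that. So there is no in-paper argument to compare against directly. Your sketch is the natural route and is consistent with how the surrounding scaffolding (Lemma~\ref{Key proposition}) is set up to be used: part~(b) is exactly the prox optimality argument; part~(c) is the substitution $x=y_{m+1}$ in \eqref{key convergence inequality}, which the paper itself already uses when proving Lemma~\ref{Key proposition}(c); and part~(a) is the standard pairing of the global line-search cap from Lemma~\ref{Key proposition}(a) with the exponential growth of $A_i$.

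One small slip in~(a): from \eqref{def:uk} and \eqref{ineq:uppercurvature1} you only get $\|v_{i+1}\|\le(\bar L+L_{i+1})\|y_{i+1}-\tilde x_i\|$, not $2L_{i+1}\|y_{i+1}-\tilde x_i\|$, since nothing forces $L_{i+1}\ge\bar L$. This is harmless---under the hypothesis $L_0=\mathcal O(\bar L)$ together with Lemma~\ref{Key proposition}(a), both $\bar L$ and $L_{i+1}$ are $\mathcal O(\bar L)$, so the resulting bound on the outer iteration count is unchanged. The obstacle you flag (carrying the $A_i$ growth through adaptive $L_{i+1}$, and doing so without assuming $\mu$ is a genuine strong-convexity modulus) is precisely the bookkeeping deferred to \cite{SujananiMonteiro}; it is handled there using the monotonicity of $\{L_i\}$ and the uniform cap from Lemma~\ref{Key proposition}(a), and you have identified it correctly as the only delicate step.
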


\section{Proof of Proposition~\ref{New Main Result ADAP-AIPP}}\label{ADAP-AIPP Appendix Proof}
This section provides the proof of Proposition~\ref{New Main Result ADAP-AIPP} stated in Subsection~\ref{ADAP-AIPP Method}.

Let $\bar B_r:=\{U:\|U\|_{F}\leq r\}$.
The following lemma establishes that the function $\tilde g(\cdot)$ in \eqref{factorized problem} has Lipschitz continuous gradient over $\bar B_3$.

\begin{lemma}\label{Lipschitz gradient}
The function $\tilde g(U)$ defined in \eqref{factorized problem} is $L_{\tilde g}$-smooth on $\bar B_3$ where $L_{\tilde g}$ is as in \eqref{phi* alpha0 quantities}.
\end{lemma}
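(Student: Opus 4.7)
The plan is to bound $\|\nabla \tilde g(U') - \nabla \tilde g(U)\|_F$ for $U, U' \in \bar B_3$ by first writing the gradient explicitly via the chain rule as
\[
\nabla \tilde g(U) = 2\, \nabla g(UU^T)\, U,
\]
which follows from the fact that $g$ acts on the symmetric matrix $UU^T$. I would then form the difference and split it into two pieces using the ``add and subtract'' identity
\[
\nabla \tilde g(U') - \nabla \tilde g(U) = 2\bigl[\nabla g(U'(U')^T) - \nabla g(UU^T)\bigr] U' + 2\, \nabla g(UU^T)\, (U' - U).
\]

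For the second piece, I would apply the submultiplicative inequality together with the definition of $\bar G$ in \eqref{phi* alpha0 quantities} to obtain a bound of $2\bar G \|U' - U\|_F$, which accounts for the $2\bar G$ term in $L_{\tilde g}$. For the first piece, I would use the Lipschitz bound \eqref{upper curvature g} on $\nabla g$ together with the telescoping identity
\[
U'(U')^T - UU^T = U'(U' - U)^T + (U' - U)U^T,
\]
the bound $\|U\|_F, \|U'\|_F \le 3$ for matrices in $\bar B_3$, and an application of the submultiplicative property of the Frobenius norm. This yields $\|U'(U')^T - UU^T\|_F \le 6\|U' - U\|_F$, followed by an additional factor of $3$ from $\|U'\|_F$ multiplying on the right and the coefficient $2$ from the chain rule, producing the constant $2 \cdot L_g \cdot 6 \cdot 3 = 36 L_g$.

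Adding the two bounds gives the desired constant $L_{\tilde g} = 2\bar G + 36 L_g$ of \eqref{phi* alpha0 quantities}. The only real subtlety is technical rather than analytical: for $U \in \bar B_3$ the matrix $UU^T$ has trace up to $9$ and therefore need not lie in $\Delta^n$, whereas \eqref{upper curvature g} only asserts $L_g$-smoothness of $g$ on $\Delta^n$. In the target application $g(\cdot) = \mathcal L_\beta(\cdot; p)$ is in fact globally $(\beta\|\mathcal A\|^2)$-smooth on $\mathbb S^n$, so the Lipschitz bound applies on the convex superset $\{UU^T : U \in \bar B_3\}$ of $\Delta^n$; this is the reading under which \eqref{phi* alpha0 quantities} makes sense, and once it is in force the computation above is entirely routine. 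The main obstacle is thus merely careful bookkeeping of the constants, with no genuine analytic difficulty.
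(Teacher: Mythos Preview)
Your proposal is correct and follows essentially the same approach as the paper: the same chain-rule expression for $\nabla\tilde g$, the same add-and-subtract splitting of the gradient difference, and the same two-term estimate yielding $2\bar G + 36L_g$. Your observation about the domain issue (that $UU^T$ for $U\in\bar B_3$ need not lie in $\Delta^n$) is in fact more careful than the paper, which simply asserts $L_g$-smoothness at the relevant points without comment.
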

\begin{proof}
Let $U_{1}, U_2 \in \bar B_3$ be given.
Adding and subtracting $U_1U^{T}_2$ and using the triangle inequality, we have
\begin{equation}\label{MVT}
    \|U_{1}U_{1}^{T}-U_{2}U_{2}^{T}\|_{F}\leq \|U_1\|\,\|U_1-U_2\|_{F}+\|U_2\|\,\|U_1-U_2\|_{F}\leq 6\|U_1-U_2\|_{F}.
\end{equation}
This relation, the chain rule, the triangle inequality, the facts that $U_1, U_2 \in \bar B_{3}$ and $g$ is $L_g$-smooth
on $\bar B_3$,
and the definition of $\bar G$ in \eqref{phi* alpha0 quantities},  then imply that 
\begin{align*}
 & \|\nabla \tilde g(U_1) -\nabla \tilde g(U_2)\|_{F} =\|2\nabla g(U_1U^{T}_1)U_1-2\nabla g(U_2U^{T}_{2})U_2\|_{F} \\
& \leq \|2\nabla g(U_1U^{T}_1)U_1-2\nabla g(U_1U^{T}_1)U_2\|_{F} + 
\|2\nabla g(U_1U^{T}_1)U_2-2\nabla g(U_2U^{T}_2)U_2\|_{F} \\
& \leq 2\|\nabla g(U_1U^{T}_1)\|_{F}\,\|U_1-U_2\|_{F} + 
2\|U_2\|\,\|\nabla g(U_1U^{T}_1)-\nabla g(U_2U^{T}_2)\|_{F}  \\
&\overset{\eqref{phi* alpha0 quantities}}{\leq} 2\bar G\|U_1-U_2\|_{F} + 6\|\nabla g(U_1U^{T}_1)-\nabla g(U_2U^{T}_2)\|_{F}   \\
&\overset{\eqref{upper curvature g}}{\le} 2\bar G\|U_1-U_2\|_{F}+6L_g\|U_1U^{T}_1-U_2U^{T}_2\|_{F}\\
&\overset{\eqref{MVT}}{\le} 2\bar G\|U_1-U_2\|_{F}+36L_g\|U_1-U_2\|_{F}=(2\bar G+36L_g)\|U_1-U_2\|_{F}.
 \end{align*}
The conclusion of the lemma now follows from the above inequality and the definition of $L_{\tilde g}$ in \eqref{phi* alpha0 quantities}.
\end{proof}

The following lemma establishes key properties about each ADAP-FISTA call made in step 1 of ADAP-AIPP. It is a translation of the results in Proposition~\ref{prop:nest_complex1}.


\begin{lemma}\label{ADAP FISTA translated}
Let $(\psi_s,\psi_n)$ be as in \eqref{eq:psiS-psimu}. The following statements about each ADAP-FISTA call made in the $j$-th iteration of ADAP-AIPP hold:
\begin{itemize}
\item[(a)] if
$\underbar M_j = {\cal O}(1+\lambda_0L_{\tilde g})$, it
always stops (with either success or failure) in at most 
\[\mathcal O_1\left(\sqrt{2\left(1+\lambda_0L_{\tilde g}\right)}\log^+_1 (1+\lambda_0L_{\tilde g}) \right)\] 
iterations/resolvent evaluations where $\lambda_0$ is the initial prox stepsize and $L_{\tilde g}$ is as in \eqref{phi* alpha0 quantities};
\item[(b)] if ADAP-FISTA stops successfully, it terminates with a triple
$(W,V,L) \in \bar B_1 \times \mathbb S^{n} \times \RR_{++}$ satisfying 
\begin{equation}
V \in \lambda \left[\nabla \tilde g(W)+\partial \delta_{\bar B_1}(W)\right]+(W-W_{j-1})
\end{equation}
\begin{equation}
\|V\|_{F}\leq \sigma\|W-W_{j-1}\|_{F}, \quad L \leq \max\{\underbar M_j, \omega (1+\lambda_0L_{\tilde g})\}\label{inclusion}
\end{equation}
where $\omega=4/(1-\chi)$;
\item[(c)] if $\psi_s$ is $1/2$-convex on $\bar B_1$, then ADAP-FISTA always terminates with success and
its output
$(W,V,L)$
always satisfies relation \eqref{subdiff ineq check}. 
\end{itemize}
\end{lemma}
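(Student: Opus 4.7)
The plan is to translate each part of Proposition~\ref{prop:nest_complex1} to the specific setting $(\psi_s,\psi_n) = (\lambda \tilde g+\tfrac12\|\cdot-W_{j-1}\|_F^2,\; \lambda\,\delta_{\bar B_1})$ with input $x_0 = W_{j-1}$ and $(\mu,L_0)=(1/2,\underbar M_j)$. The first preliminary step is to identify the correct Lipschitz constant $\bar L$ for $\nabla\psi_s$ on the enlarged set $\tilde B$ of \eqref{Closed Ball}. Since $\dom\psi_n = \bar B_1$ has diameter $D_{\psi_n}=2$, we have $\tilde B = \bar B_1 + \bar B_2 = \bar B_3$. By Lemma~\ref{Lipschitz gradient}, $\tilde g$ is $L_{\tilde g}$-smooth on $\bar B_3$, and since the quadratic $\tfrac12\|\cdot - W_{j-1}\|_F^2$ has $1$-Lipschitz gradient, I obtain
\[
\bar L \;\le\; \lambda L_{\tilde g} + 1 \;\le\; \lambda_0 L_{\tilde g} + 1,
\]
using $\lambda \le \lambda_0$ (a monotonicity property of ADAP-AIPP).

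Given this bound on $\bar L$, statement~(a) follows directly from Proposition~\ref{prop:nest_complex1}(a) with $\mu = 1/2$: the iteration complexity is
\[
\mathcal O_1\!\left(\sqrt{\bar L/\mu}\,\log_1^+(\bar L)\right)
\;=\; \mathcal O_1\!\left(\sqrt{2(1+\lambda_0 L_{\tilde g})}\,\log_1^+(1+\lambda_0 L_{\tilde g})\right),
\]
provided $L_0 = \underbar M_j = \mathcal O(1+\lambda_0 L_{\tilde g})$, which is the standing assumption of (a). For statement~(b), I apply Proposition~\ref{prop:nest_complex1}(b). Its output $(y,v,L)$ satisfies $v \in \nabla\psi_s(y)+\partial\psi_n(y)$, and a direct computation of these derivatives yields
\[
v \;\in\; \lambda\nabla\tilde g(y) + (y - W_{j-1}) + \lambda\,\partial\delta_{\bar B_1}(y)
\;=\; \lambda\bigl[\nabla\tilde g(y)+\partial\delta_{\bar B_1}(y)\bigr] + (y - W_{j-1}),
\]
which, after renaming $(W,V):=(y,v)$, is the claimed inclusion. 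The relative-error bound $\|V\|_F \le \sigma\|W-W_{j-1}\|_F$ is just $\|v\|\le\sigma\|y-x_0\|$ transcribed with $x_0 = W_{j-1}$, and the bound $L \le \max\{\underbar M_j,\,\omega(1+\lambda_0 L_{\tilde g})\}$ with $\omega=4/(1-\chi)$ follows from $L \le \max\{L_0,4\bar L/(1-\chi)\}$ combined with the estimate on $\bar L$.

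For statement~(c), I invoke Proposition~\ref{prop:nest_complex1}(c): if $\psi_s$ is $\tfrac12$-convex on $\mathcal N = \bar B_1$, ADAP-FISTA terminates with success and its output obeys $v \in \partial(\psi_s+\psi_n)(W)$. Since $W,W_{j-1}\in\bar B_1$ implies $\psi_n(W)=\psi_n(W_{j-1})=0$, the subgradient inequality for $\psi_s+\psi_n$ at $W$ evaluated at $W_{j-1}$ reduces to
\[
\psi_s(W_{j-1}) \;\ge\; \psi_s(W) + \langle v,\,W_{j-1}-W\rangle,
\]
which, after substituting $\psi_s = \lambda\tilde g + \tfrac12\|\cdot-W_{j-1}\|_F^2$ and noting that $\psi_s(W_{j-1}) = \lambda\tilde g(W_{j-1})$, is exactly \eqref{subdiff ineq check}. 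The mildly delicate point here is that the $\tfrac12$-convexity hypothesis has to be read inside ADAP-AIPP's loop: it holds automatically once $\lambda$ is small enough (specifically, once $\lambda \le 1/(2 L_{\tilde g})$, which is enforced after finitely many halvings of $\lambda$ in step~2), so the combination with part~(b) and the internal loop of ADAP-AIPP is what guarantees eventual success of each ADAP-FISTA call. I do not expect any real obstacle beyond this bookkeeping, since everything reduces mechanically to Proposition~\ref{prop:nest_complex1} once the identification $\bar L \le 1 + \lambda_0 L_{\tilde g}$ is in hand.
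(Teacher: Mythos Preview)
Your proposal is correct and follows essentially the same route as the paper: identify $\tilde B=\bar B_3$ via $D_{\psi_n}=2$, use Lemma~\ref{Lipschitz gradient} together with $\lambda\le\lambda_0$ to get $\bar L\le 1+\lambda_0 L_{\tilde g}$, and then read off parts (a), (b), (c) directly from the corresponding parts of Proposition~\ref{prop:nest_complex1}. Your derivation of \eqref{subdiff ineq check} in (c) by evaluating the subgradient inequality at $W_{j-1}$ is exactly what the paper means when it says the inclusion and the definition of subdifferential imply \eqref{subdiff ineq check}; the extra remark about $\lambda\le 1/(2L_{\tilde g})$ forcing $1/2$-convexity is not needed for this lemma itself (it belongs to the analysis of the ADAP-AIPP loop), but it does no harm.
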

\begin{proof}
(a)  The result follows directly from Proposition~\ref{prop:nest_complex1} in Appendix~\ref{ADAP-FISTA} and hence the proof relies on verifying its assumptions. First it is easy to see that $\dom \psi_{n}+\bar B_2=\bar B_3$, where $\psi_{n}$ is as in \eqref{eq:psiS-psimu}. It also follows immediately from the fact that $\lambda\leq \lambda_0$ and from Lemma~\ref{Lipschitz gradient} that  $\psi_{s}$ is $(1+\lambda_0 L_{\tilde g})$-smooth on $\bar B_3$ in view of its definition in \eqref{eq:psiS-psimu}. These two observations imply that $\bar L=1+\lambda_0L_{\tilde g}$ satisfies \eqref{ineq:uppercurvature1}. Hence, it follows from this conclusion, the fact that each ADAP-FISTA call is made with $(\mu,L_0)=(1/2,\underbar M_j)$, and from Proposition~\ref{prop:nest_complex1}(a) that statement (a) holds.


(b) In view of the definition of $\psi_s$ in \eqref{eq:psiS-psimu}, it is easy that see that $\nabla \psi_s(W)=\lambda \tilde g(W)+(W-W_{j-1})$. Statement (b) then immediately follows from this observation, the fact that each ADAP-FISTA call is made with inputs $x_0=W_{j-1}$, $L_0=\underbar M_j$, and $(\psi_s,\psi_n)$ as in \eqref{eq:psiS-psimu}, and from Proposition~\ref{prop:nest_complex1}(b) with $\bar L=1+\lambda_0L_{\tilde g}$.


(c) It follows immediately from Proposition~\ref{prop:nest_complex1}(c) and the fact that each ADAP-FISTA call is made with inputs $(\psi_s,\psi_n)$ as in \eqref{eq:psiS-psimu} and $\mu=1/2$ that the first conclusion of statement (c) holds and that output $(W,V,L)$ satisfies inclusion
\begin{equation}\label{Inclusion ADAP}
V \in \partial \left(\lambda \left(\tilde g+\delta_{\bar B_1}\right)(\cdot)+\frac{1}{2}\|\cdot-W_{j-1}\|^2_{F}\right)\left(W\right).
\end{equation}
Inclusion \eqref{Inclusion ADAP} and the definition of subdifferential in \eqref{def:epsSubdiff} then immediately imply that output $(W,V,L)$ satisfies relation \eqref{subdiff ineq check}.
\end{proof}

The lemma below shows that,
in every iteration of ADAP-AIPP,
the loop within steps 1 and 2 always stops and shows key properties of its output.
Its proof (included here for completeness) closely follows the one of Proposition 3.1
of \cite{SujananiMonteiro}.

\begin{lemma}\label{ACG facts}
The following statements about ADAP-AIPP hold for every $j\ge 1$:
\begin{itemize}
\item[(a)] the function $\psi_s$ in \eqref{eq:psiS-psimu} has
$(1+\lambda_0L_{\tilde g})$-Lipschitz continuous gradient on $\bar B_3$;
\item[(b)]
the loop within steps 1 and 2 of its
$j$-th iteration always ends and the output  $(W_j,V_j,R_j,\lam_j, \bar M_j)$ obtained
at the end of step 2 satisfies
\begin{align}
&R_j\in\nabla \tilde g(W_j)+\partial \delta_{\bar B_1}(W_j); \label{w inclusion}\\
&\left(\frac{1-\sigma}{\sigma}\right)\|V_{j}\|_{F}\leq \|\lambda_j R_j\|_{F}\leq (1+\sigma)\|W_j-W_{j-1}\|_{F}\label{w inequalities};\\
&\lambda_{j} \tilde g(W_{j-1}) - \left [\lambda \tilde g(W_{j}) + \frac{1}{2} \|W_{j}-W_{j-1}\|_{F}^2 \right] \ge V_j\bullet (W_{j-1}-W_{j});\label{subdiff zk} \\
&\bar M_{j} \le \max\{\underbar M_{j}, \omega \mathcal (1+\lambda_0L_{\tilde g})\}\label{upper L bound};\\
&\lambda_0\geq \lambda_j \geq \underline \lambda,\label{lower lambda bound}
\end{align}
where $\omega=4/(1-\chi)$, $\lambda_0$ is the initial prox stepsize, and $L_{\tilde g}$ and $\underline \lambda$ are as in \eqref{phi* alpha0 quantities} and \eqref{phi* alpha0 quantities-2}, respectively;
moreover, every prox stepsize~$\lam$ generated within the aforementioned loop is in $[\underline \lam, \lambda_0]$.
\end{itemize}
\end{lemma}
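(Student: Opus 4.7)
For part (a), the plan is to invoke Lemma~\ref{Lipschitz gradient} directly: since $\tilde g$ is $L_{\tilde g}$-smooth on $\bar B_3$ and every $\lambda$ generated inside ADAP-AIPP satisfies $\lambda\le \lambda_0$, the function $\lambda\tilde g$ has $\lambda_0 L_{\tilde g}$-Lipschitz gradient on $\bar B_3$; adding the quadratic $\tfrac12\|\cdot - W_{j-1}\|_F^2$, whose gradient is $1$-Lipschitz, yields the claimed bound of $1+\lambda_0 L_{\tilde g}$ for the gradient of $\psi_s$ in \eqref{eq:psiS-psimu}.

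For part (b), the crux is to show that the inner loop of steps~1--2 terminates; the remaining identities will then be bookkeeping from Lemma~\ref{ADAP FISTA translated}. The key observation I would use is that whenever $\lambda \le 1/(2L_{\tilde g})$ one has $1-\lambda L_{\tilde g} \ge 1/2$, so $\psi_s = \lambda\tilde g + \tfrac12\|\cdot-W_{j-1}\|_F^2$ is $1/2$-strongly convex on $\bar B_1$ (using only the descent/ascent inequality implied by $L_{\tilde g}$-smoothness of $\tilde g$ plus the curvature of the quadratic). Lemma~\ref{ADAP FISTA translated}(c) then forces ADAP-FISTA to exit successfully with output automatically satisfying \eqref{subdiff ineq check}, so halving $\lambda$ finitely many times terminates the loop. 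To obtain \eqref{lower lambda bound}, I would split into cases according to $\underline\lambda = \min\{\lambda_0, 1/(4L_{\tilde g})\}$: if $\lambda_0 \le 1/(4L_{\tilde g})$ then the very first try already succeeds and $\lambda_j = \lambda_0 = \underline\lambda$; otherwise the last tried value $\lambda_j$ must satisfy $2\lambda_j > 1/(2L_{\tilde g})$, for otherwise the previous halving step would already have succeeded by strong convexity, giving $\lambda_j > 1/(4L_{\tilde g}) = \underline\lambda$. The upper bound $\lambda \le \lambda_0$ for every value tried in the loop, including $\lambda_j$, follows trivially from the halving rule.

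The remaining relations will follow by direct manipulation of Lemma~\ref{ADAP FISTA translated}(b) applied with $\lambda = \lambda_j$: inclusion \eqref{w inclusion} by dividing the inclusion in \eqref{inclusion} by $\lambda_j$ and identifying $R_j$ via \eqref{Rj def}; relation \eqref{subdiff zk} as just \eqref{subdiff ineq check} evaluated at $\lambda=\lambda_j$, which holds because the loop exited with success; the two-sided estimate \eqref{w inequalities} by writing $\lambda_j R_j = V_j + (W_{j-1}-W_j)$ and applying the triangle inequality in both directions together with the relative-error bound $\|V_j\|_F \le \sigma\|W_j-W_{j-1}\|_F$ from \eqref{inclusion}; and \eqref{upper L bound} as a direct restatement of the bound on $L$ in \eqref{inclusion} read with $\bar M_j=L$. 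The main obstacle will be the termination-plus-constant argument for \eqref{lower lambda bound}: one must exploit both halves of Lemma~\ref{ADAP FISTA translated}(c)---namely that in the strongly convex regime ADAP-FISTA must succeed \emph{and} that its output lies in the full subdifferential so \eqref{subdiff ineq check} is automatic---and then carefully track the halving factor to pin $\lambda_j$ to exactly $\underline\lambda$ as defined in \eqref{phi* alpha0 quantities-2}.
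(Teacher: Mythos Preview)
Your proposal is correct and follows essentially the same approach as the paper's proof: part (a) via Lemma~\ref{Lipschitz gradient} and $\lambda\le\lambda_0$, and part (b) by combining the success/output guarantees of Lemma~\ref{ADAP FISTA translated}(b) with the strong-convexity termination argument from Lemma~\ref{ADAP FISTA translated}(c) once $\lambda\le 1/(2L_{\tilde g})$. The only place to be slightly more careful is in your case analysis for \eqref{lower lambda bound}: since $\lambda$ persists across iterations without ever being reset, you should phrase the argument globally (once some $\lambda\le 1/(2L_{\tilde g})$ it is never halved again, so the smallest value ever reached is either $\lambda_0$ or a value whose double exceeded $1/(2L_{\tilde g})$), rather than per-iteration; the paper handles this the same way, stating the ``stays fixed'' claim and then noting that the halving rule plus the initial value $\lambda_0$ give the bound.
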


\begin{proof}
(a) It follows that $\psi_{s}$ has $(1+\lambda L_{\tilde g})$-Lipschitz continuous gradient on $\bar B_3$ in view of its definition in \eqref{eq:psiS-psimu} and Lemma~\ref{Lipschitz gradient}. The result then follows immediately from the fact that $\lambda \leq \lambda_{0}$. 

(b) We first claim that if the loop consisting of steps 1 and 2 of the $j$-th iteration of ADAP-AIPP
stops, then relations \eqref{w inclusion}, \eqref{w inequalities}, \eqref{subdiff zk}, and \eqref{upper L bound} hold.
Indeed, assume that the loop consisting of steps 1 and 2 of the $j$-th iteration of ADAP-AIPP
stops. 
It then follows from the logic within step 1 and 2 of ADAP-AIPP that the last ADAP-FISTA call within the loop stops successfully and outputs triple $(W_j,V_j,\bar M_j)$ satisfying \eqref{subdiff ineq check}, which immediately implies that \eqref{subdiff zk} holds. Since (a) implies that
$\bar L=1+\lambda_0L_{\tilde g}$ satisfies
relation \eqref{ineq:uppercurvature1}, it follows Proposition~\ref{prop:nest_complex1}(b) with $(\psi_s,\psi_n)$ as in \eqref{eq:psiS-psimu}, $x_0=W_{j-1}$, and $L_0=\underbar M_j$
that the triple $(W_j,V_j,\bar M_j)=(y,v,L)$ satisfies inequality \eqref{upper L bound} and the following two relations
\begin{align}
&V_{j} \in \lambda_j[\nabla \tilde g(W_j)+\partial \delta_{\bar B_1}(W_j)]+W_{j}-W_{j-1} \label{R tilde inclusion}\\
&\|V_{j}\|_{F}\leq \sigma\|W_{j}-W_{j-1}\|_{F} \label{Nesterov Wj}.
\end{align}
Now, using the definition of
$R_j$ in \eqref{Rj def},
it is easy to see that
the inclusion \eqref{R tilde inclusion} is equivalent
to \eqref{w inclusion} and that the inequality in
\eqref{Nesterov Wj} together with the triangle inequality for norms imply the two inequalities in \eqref{w inequalities}.


We now claim that
if 
step 1 is performed
with a prox stepsize
$\lambda \leq 1/(2L_{\tilde g})$
in the $j$-th iteration,
then for every $l >j$, we have that
$\lam_{l-1}=\lam$
and the $l$-th iteration performs step 1 only once.
To show the claim,
assume that $\lambda\leq 1/(2L_{\tilde g})$. Using this assumption and the fact that Lemma~\ref{Lipschitz gradient} implies that $\tilde g$ is $L_{\tilde g}$ weakly convex on $\bar B_3$, it is easy to see that the function $\psi_s$ in \eqref{eq:psiS-psimu} is strongly convex on $\bar B_1\subseteq \bar B_3$ with modulus $1-\lam L_{\tilde g} \geq 1/2$. Since each ADAP-FISTA call is performed in step~1 of ADAP-AIPP with $\mu=1/2$, it follows immediately from Proposition~\ref{prop:nest_complex1}(c) with $(\psi_s,\psi_n)$ as in \eqref{eq:psiS-psimu} that ADAP-FISTA terminates successfully and outputs
a pair $(W,V)$ satisfying $V \in \partial (\psi_s+\psi_n)(W)$. This inclusion, the definitions of
$(\psi_s,\psi_n)$, and 
the definition of subdifferential in \eqref{def:epsSubdiff},
then imply that
\eqref{subdiff ineq check} holds.
Hence, in view of the termination criteria of step~2 of ADAP-AIPP, it follows that $\lambda_j=\lambda$. It is then easy to see, by the way $\lambda$ is updated in step~2 of ADAP-AIPP, that $\lambda$ is not halved in the $(j+1)$-th iteration or any subsequent iteration, hence proving the claim.

It is now straightforward to
see that the above two claims,
the fact that the initial value of the prox stepsize is equal to~$\lambda_0$, and
the way $\lam_j$ is updated in
ADAP-AIPP, imply
that the lemma holds.
\end{proof}

\begin{lemma}\label{induction lemma}
For any $j \geq 1$,
the quantity $\underbar M_j$ satisfies
\begin{equation}\label{important recursion}
\underbar M_{j} \leq \omega (1+\lambda_0L_{\tilde g})
\end{equation}
where $\omega=4/(1-\chi)$, $\lambda_0$ is the initial prox stepsize, and $L_{\tilde g}$ is as in \eqref{phi* alpha0 quantities}.
\end{lemma}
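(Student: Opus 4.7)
The plan is to prove \eqref{important recursion} by a straightforward induction on $j$, leveraging the bound \eqref{upper L bound} established in Lemma~\ref{ACG facts}(b). The only ingredients are: (i) the initialization $\bar M_0 = 1$ from step 0 of ADAP-AIPP; (ii) the constraint $\underbar M_j \in [1, \bar M_{j-1}]$ imposed in step 1; and (iii) the bound $\bar M_j \le \max\{\underbar M_j, \omega(1+\lambda_0 L_{\tilde g})\}$ from \eqref{upper L bound}.

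For the base case $j=1$, the recipe for $\underbar M_1$ in step 1 forces $\underbar M_1 \in [1, \bar M_0] = [1,1]$, so $\underbar M_1 = 1$. Since $\chi \in (0,1)$ gives $\omega = 4/(1-\chi) > 4$, and $1 + \lambda_0 L_{\tilde g} \ge 1$, we have $\omega(1+\lambda_0 L_{\tilde g}) \ge 4 > 1 = \underbar M_1$, establishing the base case.

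For the inductive step, assume $\underbar M_j \le \omega(1+\lambda_0 L_{\tilde g})$ for some $j \ge 1$. Then Lemma~\ref{ACG facts}(b) yields
\[
\bar M_j \;\le\; \max\{\underbar M_j,\; \omega(1+\lambda_0 L_{\tilde g})\} \;\le\; \omega(1+\lambda_0 L_{\tilde g}),
\]
where the second inequality uses the inductive hypothesis. Since step 1 of ADAP-AIPP at iteration $j+1$ picks $\underbar M_{j+1} \in [1, \bar M_j]$, it follows that $\underbar M_{j+1} \le \bar M_j \le \omega(1+\lambda_0 L_{\tilde g})$, completing the induction.

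There is no real obstacle here: the lemma is essentially a bookkeeping consequence of the ``warm-start with shrinking cap'' scheme that ADAP-AIPP uses for the Lipschitz estimates passed into ADAP-FISTA, combined with the per-iteration bound \eqref{upper L bound}. The only small observation worth being explicit about is that $\omega(1+\lambda_0 L_{\tilde g}) \ge 1$, which is needed to start the induction and is trivial given $\chi \in (0,1)$ and $\lambda_0, L_{\tilde g} \ge 0$.
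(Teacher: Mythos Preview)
Your proof is correct and follows essentially the same induction argument as the paper: both establish the base case from $\underbar M_1 = 1$ and $\omega > 1$, then use $\underbar M_j \le \bar M_{j-1}$ together with the bound \eqref{upper L bound} from Lemma~\ref{ACG facts}(b) for the inductive step. Your version is slightly more explicit in noting $\omega > 4$ and $1+\lambda_0 L_{\tilde g} \ge 1$, but the structure is identical.
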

\begin{proof}
The result follows from a simple induction argument. The inequality with $j=1$ is immediate due to the facts that $\underbar M_{1}=1$ and $\omega>1$. Now suppose inequality \eqref{important recursion} holds for $j-1$. It then follows from relation \eqref{upper L bound} and the fact that $\underbar M_j\leq \bar M_{j-1}$ that
\[\underbar M_{j} \leq \bar M_{j-1} \overset{\eqref{upper L bound}}{\le} \max\{\underbar M_{j-1}, \omega (1+\lambda_0L_{\tilde g})\}=\omega (1+\lambda_0L_{\tilde g}),\]
where the equality is due to the assumption that \eqref{important recursion} holds for $j-1$. Hence, Lemma~\ref{induction lemma} holds.
\end{proof}

\begin{rem}
It follows from Lemma~\ref{induction lemma}  that $\underbar M_j = {\cal O}(1+\lambda_0L_{\tilde g})$ and hence Lemma~\ref{ADAP FISTA translated}(a) implies
that each ADAP-FISTA call made in step 1 of ADAP-AIPP performs at most
\[\mathcal O_1\left(\sqrt{2\left(1+\lambda_0L_{\tilde g}\right)}\log^+_1 (1+\lambda_0L_{\tilde g}) \right)\]
iterations/resolvent evaluations where $\lambda_0$ is the initial prox stepsize and $L_{\tilde g}$ is as in \eqref{phi* alpha0 quantities}.
\end{rem}
The following lemma shows that ADAP-AIPP is a descent method.
\begin{lemma}
If $j$ is an iteration index for ADAP-AIPP, then
\begin{align}
\frac {\underline \lambda}{C_{\sigma}}\|R_j\|_{F}^2
&\le \tilde g(W_{j-1})-\tilde g(W_j)\label{auxineq:declemma3}
\end{align}
where $C_\sigma$ and $\underline \lambda$ are as in \eqref{phi* alpha0 quantities-2}.
\end{lemma}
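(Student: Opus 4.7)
The plan is to combine the inexact descent inequality \eqref{subdiff zk} with the relative error bound on $V_j$ coming from \eqref{w inequalities}, and then convert the resulting bound on $\|W_j - W_{j-1}\|_F$ into a bound on $\|R_j\|_F$ using the defining identity $\lambda_j R_j = V_j + W_{j-1} - W_j$ from \eqref{Rj def}.

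Concretely, I would first rewrite \eqref{subdiff zk} as
\[
\lambda_j\bigl[\tilde g(W_{j-1}) - \tilde g(W_j)\bigr] \;\ge\; \tfrac{1}{2}\|W_j-W_{j-1}\|_F^2 + V_j \bullet (W_{j-1}-W_j).
\]
Next, I would substitute $W_j - W_{j-1} = V_j - \lambda_j R_j$ into the right-hand side. Expanding the squared norm and the inner product (the cross terms $V_j\bullet\lambda_j R_j$ cancel), the right-hand side simplifies cleanly to $\tfrac{1}{2}\|\lambda_j R_j\|_F^2 - \tfrac{1}{2}\|V_j\|_F^2$. This identity-based manipulation is what makes the argument clean; there is no need to invoke Cauchy-Schwarz on the cross term, which would give a looser bound with a $(1+\sigma)^2$ denominator instead of $(1-\sigma)^2$.

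Then I would apply the left inequality in \eqref{w inequalities}, namely $\|V_j\|_F \le \tfrac{\sigma}{1-\sigma}\|\lambda_j R_j\|_F$, to obtain
\[
\lambda_j\bigl[\tilde g(W_{j-1}) - \tilde g(W_j)\bigr] \;\ge\; \tfrac{1}{2}\Bigl(1 - \tfrac{\sigma^2}{(1-\sigma)^2}\Bigr)\|\lambda_j R_j\|_F^2 \;=\; \tfrac{1-2\sigma}{2(1-\sigma)^2}\,\lambda_j^2\|R_j\|_F^2,
\]
where I used the algebraic identity $(1-\sigma)^2 - \sigma^2 = 1-2\sigma$. Dividing both sides by $\lambda_j$ and applying the lower bound $\lambda_j \ge \underline\lambda$ from \eqref{lower lambda bound} yields the desired inequality in view of the definition $C_\sigma = 2(1-\sigma)^2/(1-2\sigma)$ in \eqref{phi* alpha0 quantities-2}.

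The main obstacle to anticipate is the temptation to bound $V_j \bullet (W_{j-1}-W_j)$ by Cauchy-Schwarz followed by $\|V_j\|_F \le \sigma\|W_j-W_{j-1}\|_F$; this route produces a $(1+\sigma)^2$ in the denominator when converting $\|W_j-W_{j-1}\|_F$ back into $\|\lambda_j R_j\|_F$, which is too weak to match $C_\sigma$. The algebraic substitution $W_j - W_{j-1} = V_j - \lambda_j R_j$ and the use of the tighter relative-error estimate $\|V_j\|_F \le \tfrac{\sigma}{1-\sigma}\|\lambda_j R_j\|_F$ from \eqref{w inequalities} are the two ingredients that make the constants match exactly.
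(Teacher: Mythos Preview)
Your proposal is correct and essentially identical to the paper's proof. The only cosmetic difference is that the paper reaches the identity $\tfrac{1}{2}\|\lambda_j R_j\|_F^2 - \tfrac{1}{2}\|V_j\|_F^2$ by completing the square, writing the right-hand side of \eqref{subdiff zk} as $\tfrac{1}{2}\|W_{j-1}-W_j+V_j\|_F^2 - \tfrac{1}{2}\|V_j\|_F^2$ and then recognizing $W_{j-1}-W_j+V_j = \lambda_j R_j$, whereas you substitute $W_j - W_{j-1} = V_j - \lambda_j R_j$ and expand; the subsequent use of \eqref{w inequalities}, the algebraic simplification, and the final division by $\lambda_j$ with $\lambda_j \ge \underline\lambda$ are exactly the same.
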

\begin{proof}
It follows immediately from the first inequality in \eqref{w inequalities}, relation \eqref{subdiff zk}, and the definitions of $R_{j}$ and $C_{\sigma}$ in \eqref{Rj def} and \eqref{phi* alpha0 quantities-2}, respectively that:
\begin{align}\label{string of inequalities}
    \lambda_j\tilde g(W_{j-1}) -  \lambda_j\tilde g(W_j)&\overset{\eqref{subdiff zk}}{\ge} \frac{1}{2} \|W_{j}-W_{j-1}\|_{F}^2+ V_{j} \bullet (W_{j-1}-W_j) \nonumber\\
    &=\frac{1}{2}\|W_{j-1}-W_{j}+V_{j}\|_{F}^2-\frac{1}{2}\|V_j\|_{F}^2 \nonumber\\
    &\overset{\eqref{Rj def}}{=}\frac{1}{2}\|\lambda_j R_j\|_{F}^2-\frac{1}{2}\|V_j\|_{F}^2 \nonumber\\
    &\overset{\eqref{w inequalities}}{\geq}\frac{1}{2}\|\lambda_jR_j\|_{F}^2-\frac{\sigma^2}{2(1-\sigma)^2}\|\lambda_jR_j\|_{F}^2 \nonumber\\
    &=\frac{1-2\sigma}{2(1-\sigma)^2}\|\lambda_jR_j\|_{F}^2 \overset{\eqref{phi* alpha0 quantities-2}}{=}\frac{\|\lambda_jR_j\|_{F}^2}{C_\sigma}.
\end{align}
Dividing inequality \eqref{string of inequalities} by $\lambda_j$ and using relation \eqref{lower lambda bound} then imply
\begin{align*}
\tilde g(W_{j-1})-\tilde g(W_j)\overset{\eqref{string of inequalities}}{\geq}  \frac{\lambda_{j}}{C_\sigma}\|R_j\|_{F}^2 \overset{\eqref{lower lambda bound}}{\geq} \frac{\underline \lambda}{C_\sigma}\|R_j\|_{F}^2
\end{align*}
from which the result of the lemma immediately follows.
\end{proof}


We are now ready to give the proof of Proposition~\ref{New Main Result ADAP-AIPP}.
\begin{proof}[Proof of Proposition~\ref{New Main Result ADAP-AIPP}]

(a) The first statement of (a) follows immediately from the fact that relation \eqref{w inclusion} and the termination criterion of ADAP-AIPP in its step 3 imply that the pair $(\overline W,\overline R)$ satisfies \eqref{stationary solution}. Assume now by contradiction that \eqref{iteration bound} does not hold.
This implies that there exists an iteration index $l$ such that
\begin{equation}\label{iteration count}
l > 1+\frac{C_{\sigma}}{\underline \lambda \bar \rho^2} \left[\tilde g(\underline W)-\tilde g(W_l)\right].
\end{equation}
As ADAP-AIPP generates $l$ as an iteration index, it does not terminate at the $(l-1)$-th iteration. In view of step 3 of ADAP-AIPP, this implies that
$\|R_j\|_F > \bar \rho$ for every $j=1,\ldots,l-1$.
Using this conclusion and the fact that $W_0=\underline W$, and summing inequality \eqref{auxineq:declemma3} from $1$ to $l$, we conclude that
\begin{align*}
(l-1) \bar \rho^2 < \sum_{j=1}^{l-1} \|R_j\|_{F}^2 \leq \sum_{j=1}^{l} \|R_j\|_{F}^2 \overset{\eqref{auxineq:declemma3}}{\leq} \frac{C_{\sigma}}{\underline \lambda}\sum_{j=1}^{l}\
	\tilde g(W_{j-1})-\tilde g(W_j)
=\frac{C_{\sigma}}{\underline \lambda} \left[\tilde g(\underline W)-\tilde g(W_{l})\right]
\end{align*}
which can be easily seen to contradict \eqref{iteration count}. 

(b) The result follows immediately from (a) and the fact that
the number of times $\lam$ is divided by $2$
in step 2 of ADAP-AIPP is at most $\lceil \log_0^+(\lambda_0/{\underline \lam})/\log 2 \rceil$.
%
\end{proof}

\section{Relaxed Frank-Wolfe Method}\label{FW Appendix}
Let $\mathbb E$ denote a finite-dimensional inner product real vector space with inner product and induced norm denoted by $\inner{\cdot}{\cdot}$ and $\|\cdot\|$, respectively. Let  $\Omega \subseteq \mathbb E$ be a nonempty compact convex set with diameter $D_{\Omega}$. 
Consider the problem
\begin{equation}\label{Problem of Interest}
 (P)   \quad g_{*}:=\min_{U}\{g(U): U\in \Omega\}
\end{equation}
where $g:\mathbb E \rightarrow \mathbb R$ is a convex function that satisfies the following assumption:
\begin{itemize}
\item[\textbf{(A1)}]
there exists $L_g > 0$ such that
\begin{equation}\label{upper curvature}
g(U') -\ell_g(U';U) \leq \frac{L_g}2 \|U'-U \|^2 \quad \forall U,U' \in \Omega.
\end{equation}
\end{itemize}

The formal description of the Relaxed FW (RFW) method and its main complexity result for finding a near-optimal solution of \eqref{Problem of Interest} are presented below. The proof of the main result is given in the next subsection.

\noindent\begin{minipage}[t]{1\columnwidth}%
\rule[0.5ex]{1\columnwidth}{1pt}

\noindent \textbf{RFW Method}

\noindent \rule[0.5ex]{1\columnwidth}{1pt}%
\end{minipage}

\noindent \textbf{Input}: tolerance $\bar \epsilon>0$ and initial point $\tilde Z_0 \in \Omega$.

\noindent \textbf{Output}: a point $\bar Z$.
\begin{itemize}
\item[{\bf 0.}] set  $k=1$;

\item[{\bf 1.}]  find a point $Z_k \in \Omega$ such that 
\begin{equation}\label{descent condition}
   g(Z_{k})\leq g(\tilde Z_{k-1});
\end{equation}
\item[{\bf 2.}]
compute
\begin{equation}\label{FW algo definitions}
Z^F_k \in \argmin \{ \ell_g(U;Z_k) : U \in \Omega\}, \quad D_k:=Z_k-Z^{F}_k, \quad \epsilon_k:=\inner{\nabla g(Z_k)}{D_k};
\end{equation}
\item[\bf 3.] 
if $\epsilon_k \leq \bar \epsilon$,
then {\bf stop} and output the point $\bar Z= Z_k$; else compute \begin{equation}\label{FW stepsize}
    \alpha_k=\argmin_{\alpha}\left\{g(Z_k-\alpha D_k): \alpha \in [0,1] \right\}\}
\end{equation}
and set \begin{equation}\label{Tilde X Update}
  \tilde Z_{k}=Z_k-\alpha_kD_k;  
\end{equation}
\item[{\bf 4.}] 
set $k\gets k+1$ and go to step~1. 
\end{itemize}
\noindent \rule[0.5ex]{1\columnwidth}{1pt}

\begin{thm}\label{FW Theorem}
    For a given tolerance $\bar \epsilon>0$, the RFW method finds a point $\bar Z \in \Omega$ such that \begin{equation}\label{Approximate Solution Type}
    0 \in \nabla g(\bar Z)+\partial_{\bar \epsilon}\delta_{\Omega}(\bar Z)
    \end{equation}
in at most 
    \begin{equation}\label{Hybrid FW Method Exact Complexity}
    \left\lceil1+\frac{4\max\left\{g(\tilde Z_0)-g_{*},\sqrt{\left(g(\tilde Z_0)-g_{*}\right)L_gD^2_{\Omega}}, L_{g}D^2_{\Omega}\right\}}{\bar \epsilon} \right\rceil
    \end{equation}
    iterations where $L_g$ is as in \eqref{upper curvature} and $D_{\Omega}$ is the diameter of $\Omega$.
\end{thm}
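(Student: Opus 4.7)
The plan is to show that when the algorithm stops at iteration $k$ with $\epsilon_k\le\bar\epsilon$, the output $\bar Z=Z_k$ satisfies \eqref{Approximate Solution Type}. Since $Z_k^F$ minimizes the linear form $\ell_g(\cdot;Z_k)$ over $\Omega$ by \eqref{FW algo definitions}, the first-order condition gives $\inner{\nabla g(Z_k)}{U-Z_k^F}\ge 0$ for every $U\in\Omega$; combining this with the identity $\inner{\nabla g(Z_k)}{Z_k^F-Z_k}=-\epsilon_k$ yields
\begin{equation*}
\inner{-\nabla g(Z_k)}{U-Z_k}
= \inner{-\nabla g(Z_k)}{U-Z_k^F} + \inner{-\nabla g(Z_k)}{Z_k^F-Z_k}
\le 0+\epsilon_k\le\bar\epsilon.
\end{equation*}
Hence $-\nabla g(\bar Z)\in N^{\bar\epsilon}_\Omega(\bar Z)=\partial_{\bar\epsilon}\delta_\Omega(\bar Z)$, which is exactly \eqref{Approximate Solution Type}.

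\textbf{Stage 2 (per-iteration decrease).} Setting $h_k:=g(Z_k)-g_*$, the next step is to establish
\begin{equation}\label{plan:descent}
h_{k+1}\le h_k-\min\Bigl\{\tfrac{\epsilon_k}{2},\,\tfrac{\epsilon_k^2}{2L_gD_\Omega^2}\Bigr\}.
\end{equation}
Assumption (A1) applied along the segment $[Z_k,\,Z_k-\alpha D_k]\subset\Omega$, together with $\epsilon_k=\inner{\nabla g(Z_k)}{D_k}$, gives the quadratic upper model
\begin{equation*}
g(Z_k-\alpha D_k)\le g(Z_k)-\alpha\epsilon_k+\tfrac{L_g\alpha^2}{2}\|D_k\|^2,\qquad \alpha\in[0,1].
\end{equation*}
Plugging the truncated Cauchy step $\alpha^{\ast}=\min\{1,\epsilon_k/(L_g\|D_k\|^2)\}$ into this estimate and using $\|D_k\|\le D_\Omega$ yields $g(\tilde Z_k)-g(Z_k)\le-\min\{\epsilon_k/2,\,\epsilon_k^2/(2L_gD_\Omega^2)\}$ via the minimizer $\alpha_k$ in \eqref{FW stepsize}, and the relaxed descent condition \eqref{descent condition} applied at iteration $k+1$ then propagates the bound to $Z_{k+1}$.

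\textbf{Stage 3 (iteration count).} Suppose for contradiction that the algorithm has not stopped after $N$ iterations, where $N$ is the bound in \eqref{Hybrid FW Method Exact Complexity}. Then $\epsilon_k>\bar\epsilon$ for all $k\le N$, and telescoping \eqref{plan:descent} gives
\begin{equation*}
h_0\;\ge\;\sum_{k=1}^N\min\Bigl\{\tfrac{\epsilon_k}{2},\,\tfrac{\epsilon_k^2}{2L_gD_\Omega^2}\Bigr\}\;\ge\; N\cdot\min\Bigl\{\tfrac{\bar\epsilon}{2},\,\tfrac{\bar\epsilon^2}{2L_gD_\Omega^2}\Bigr\}.
\end{equation*}
A case split on the ordering of $\bar\epsilon$, $L_gD_\Omega^2$, and $\sqrt{h_0 L_gD_\Omega^2}$ then rearranges the resulting inequality into a contradiction with the envelope in \eqref{Hybrid FW Method Exact Complexity}. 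The hardest part of the proof is precisely this bookkeeping: the descent \eqref{plan:descent} is piecewise in $\epsilon_k$, so the iteration count naturally separates into two regimes, and showing that they can be packaged into the clean three-term $\max$ in \eqref{Hybrid FW Method Exact Complexity}---with the $\sqrt{h_0 L_gD_\Omega^2}$ term appearing as the transition value of $\epsilon_k$ between the two branches---is where the argument needs care.
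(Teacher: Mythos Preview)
Stages~1 and~2 are fine and match the paper's Lemmas; the gap is in Stage~3. Your telescoping gives
\[
\tilde\gamma_0 \;\ge\; N\cdot\min\Bigl\{\tfrac{\bar\epsilon}{2},\,\tfrac{\bar\epsilon^{2}}{2L_gD_\Omega^{2}}\Bigr\},
\]
where $\tilde\gamma_0=g(\tilde Z_0)-g_*$. In the regime $\bar\epsilon<L_gD_\Omega^2$ (the interesting one) this yields $N\le 2\tilde\gamma_0 L_gD_\Omega^2/\bar\epsilon^{2}$, which is $\Theta(1/\bar\epsilon^{2})$, not the $O(1/\bar\epsilon)$ claimed in \eqref{Hybrid FW Method Exact Complexity}. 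No case split on the ordering of $\bar\epsilon$, $L_gD_\Omega^2$, $\sqrt{\tilde\gamma_0 L_gD_\Omega^2}$ can rescue this: as $\bar\epsilon\to 0$ your bound blows up like $1/\bar\epsilon^{2}$ while the theorem's bound is $4M/\bar\epsilon$ with $M$ independent of $\bar\epsilon$. The missing ingredient is that you only used $\epsilon_k>\bar\epsilon$ and discarded the relation $\epsilon_k\ge\gamma_k:=g(Z_k)-g_*$.

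The paper's proof exploits $\epsilon_k\ge\gamma_k$ in a two-phase argument. First, plugging $\epsilon_k\ge\gamma_k$ into your descent \eqref{plan:descent} and using the reciprocal trick $\tfrac{1}{\gamma_{k+1}}-\tfrac{1}{\gamma_k}\ge \tfrac{1}{2}\min\{\tfrac{1}{\tilde\gamma_0},\tfrac{1}{L_gD_\Omega^2}\}$ gives function-value convergence $\gamma_k\le \tfrac{2}{k-1}\max\{\tilde\gamma_0,L_gD_\Omega^2\}$. Second, one telescopes your descent over the block $j=k,\dots,2k$, so the total budget is $\gamma_k=O(1/k)$ rather than $\tilde\gamma_0$; this amortizes away the extra $1/\bar\epsilon$ and produces the $\sqrt{\tilde\gamma_0 L_gD_\Omega^2}$ term as the transition value between the two branches of $\hat\alpha_j\epsilon_j$. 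Without this first phase, your Stage~3 cannot reach the stated bound.
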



\subsection{Proof of Theorem~\ref{FW Theorem}}
This subsection is dedicated to proving Theorem~\ref{FW Theorem}. 

The following lemma establishes important properties of the iterates $Z_k$ and $\epsilon_k$.
\begin{lem}\label{Epsilon K Facts Proposition}
For every $k \geq 1$, the following relations hold:
\begin{equation}\label{epsilon nonnegative}
\epsilon_{k} \geq g(Z_k)-g_{*},
 \end{equation}
\begin{equation}\label{epsilon inclusion-appendix}
    0 \in \nabla g(Z_k)+\partial_{\epsilon_k}\delta_{\Omega}(Z_k),
\end{equation}
where $\epsilon_{k}$ is defined in \eqref{FW algo definitions} and $g_{*}$ is the optimal value of \eqref{Problem of Interest}.
\end{lem}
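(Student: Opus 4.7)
The proof is a short exercise that hinges on a single observation: since $Z_k^F$ is defined in \eqref{FW algo definitions} as a minimizer of the affine function $\ell_g(\cdot;Z_k)$ over $\Omega$, it is equivalently a minimizer of the linear form $U \mapsto \inner{\nabla g(Z_k)}{U}$ over $\Omega$. Hence, for every $U \in \Omega$,
\begin{equation}\label{eq:plan-optim}
\inner{\nabla g(Z_k)}{Z_k - U} \;\leq\; \inner{\nabla g(Z_k)}{Z_k - Z_k^F} \;=\; \epsilon_k.
\end{equation}

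For \eqref{epsilon nonnegative}, I would pick an optimal solution $U_* \in \Omega$ of \eqref{Problem of Interest}, invoke convexity of $g$ to write $g(Z_k) - g_* = g(Z_k) - g(U_*) \leq \inner{\nabla g(Z_k)}{Z_k - U_*}$, and then apply \eqref{eq:plan-optim} with $U = U_*$ to bound the right-hand side by $\epsilon_k$.

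For \eqref{epsilon inclusion-appendix}, I would recall that $\partial_{\epsilon} \delta_{\Omega}(Z_k) = N^{\epsilon}_{\Omega}(Z_k)$. By the definition of the $\epsilon_k$-normal cone given in the introduction, the inclusion $-\nabla g(Z_k) \in N^{\epsilon_k}_{\Omega}(Z_k)$ is equivalent to the inequality $\inner{-\nabla g(Z_k)}{U - Z_k} \leq \epsilon_k$ holding for every $U \in \Omega$, which is exactly \eqref{eq:plan-optim} rewritten.

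There is no real obstacle here; the entire content of the lemma is the variational characterization of $Z_k^F$ combined with the subgradient inequality for $g$. The only care needed is to match the sign conventions of the $\epsilon$-normal cone/$\epsilon$-subdifferential of $\delta_{\Omega}$ so that the final inclusion reads as stated.
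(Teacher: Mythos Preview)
Your proposal is correct and follows essentially the same approach as the paper: both arguments rest on the minimality of $Z_k^F$ in \eqref{FW algo definitions} to obtain the inequality $\inner{\nabla g(Z_k)}{Z_k-U}\le\epsilon_k$ for all $U\in\Omega$, then combine it with convexity of $g$ for \eqref{epsilon nonnegative} and with the definition of the $\epsilon$-normal cone for \eqref{epsilon inclusion-appendix}. The only cosmetic difference is that, for the inclusion, the paper first records the exact optimality condition $0\in\nabla g(Z_k)+\partial\delta_\Omega(Z_k^F)$ at the Frank--Wolfe vertex and then transfers it to $Z_k$, whereas you verify the $\epsilon_k$-normal cone inequality at $Z_k$ directly; the underlying computation is identical.
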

\begin{proof}
Suppose $Z' \in \Omega$. It follows from the fact $g$ is convex, the definitions of $D_k$ and $\epsilon_{k}$ in \eqref{FW algo definitions}, and the way $Z^{F}_{k}$ is computed in \eqref{FW algo definitions} that
    \begin{align}\label{key relation New}
        g(Z_k)-\epsilon_{k}&\overset{\eqref{FW algo definitions}}{=}g(Z_k)+\inner{\nabla g(Z_k)}{Z^F_k-Z_k} \nonumber\\
        &\overset{\eqref{FW algo definitions}}{\leq} g(Z_k)+\inner{\nabla g(Z_k)}{Z'-Z_k} \leq g(Z').
    \end{align}
Since \eqref{key relation New} holds for any $Z'$ in $\Omega$, it must hold for the minimizer $Z^{*}$ of \eqref{Problem of Interest}, and hence
$g(Z_k)-\epsilon_{k} \leq g_{*}$,
which immediately shows relation \eqref{epsilon nonnegative}.

It follows from the fact that $Z^{F}_k \in \argmin \{ \ell_g(U;Z_k) : U \in \Omega\}$ that \[0 \in \nabla g(Z_k)+\partial \delta_{\Omega}(Z^{F}_k).\] It then follows from the above relation, the definition of $\epsilon$-subdifferential in \eqref{def:epsSubdiff}, and the definition of $\epsilon_{k}$ in \eqref{FW algo definitions} that inclusion \eqref{epsilon inclusion-appendix} holds.
\end{proof}

The following lemma establishes that the RFW method is a descent method.

\begin{lem}\label{Important FW Convergence Lemma}
Define
\begin{equation}\label{Definition of Hat Alpha}
\hat \alpha_k:=\min\left\{ 1, \frac{\epsilon_k}
{L_gD^2_{\Omega}}\right\} \quad \forall k\geq 1.
\end{equation}
Then the following statements hold for every $k \geq 1$:
\begin{equation}\label{key convergence relation}
\epsilon_{k}\leq \frac{2}{\hat \alpha_k}\left(g(Z_k)-g(\tilde Z_{k}) \right),
\end{equation}
\begin{equation}\label{descent tildes}
    g(Z_{k+1}) \leq g(\tilde Z_{k})\leq g(Z_{k}).
\end{equation}

\end{lem}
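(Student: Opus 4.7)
The plan is a standard Frank-Wolfe-style curvature argument, combined with the two structural choices in the algorithm: $\alpha_k$ is the exact line-search minimizer in \eqref{FW stepsize} and $Z_{k+1}$ is required by \eqref{descent condition} to satisfy $g(Z_{k+1})\le g(\tilde Z_k)$. The only real input from analysis is the descent inequality coming from \eqref{upper curvature}.

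First I would establish, for every $\alpha\in[0,1]$, the estimate
\begin{equation*}
g(Z_k-\alpha D_k)\;\le\; g(Z_k)-\alpha\,\epsilon_k+\frac{L_g\alpha^{2}}{2}\|D_k\|^{2}
\;\le\; g(Z_k)-\alpha\,\epsilon_k+\frac{L_g\alpha^{2}D_\Omega^{2}}{2},
\end{equation*}
using \eqref{upper curvature} applied to the pair $(Z_k,Z_k-\alpha D_k)\in\Omega\times\Omega$ (note $Z_k-\alpha D_k=(1-\alpha)Z_k+\alpha Z_k^F$ is in $\Omega$ by convexity), together with $\inner{\nabla g(Z_k)}{D_k}=\epsilon_k$ from \eqref{FW algo definitions} and $\|D_k\|\le D_\Omega$. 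Since $\alpha_k$ minimizes $\alpha\mapsto g(Z_k-\alpha D_k)$ on $[0,1]$ and $\tilde Z_k=Z_k-\alpha_kD_k$ (by \eqref{Tilde X Update}), we have $g(\tilde Z_k)\le g(Z_k-\alpha D_k)$ for every admissible $\alpha$; in particular for $\alpha=\hat\alpha_k$.

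Next I would plug in $\alpha=\hat\alpha_k$ and split into the two cases defining $\hat\alpha_k$. If $\hat\alpha_k=\epsilon_k/(L_gD_\Omega^{2})\le 1$, the right-hand side becomes $g(Z_k)-\hat\alpha_k\epsilon_k+\hat\alpha_k\epsilon_k/2=g(Z_k)-\hat\alpha_k\epsilon_k/2$. If instead $\hat\alpha_k=1$, then $\epsilon_k\ge L_gD_\Omega^{2}$ and the right-hand side is bounded by $g(Z_k)-\epsilon_k+\epsilon_k/2=g(Z_k)-\epsilon_k/2=g(Z_k)-\hat\alpha_k\epsilon_k/2$. In either case,
\begin{equation*}
g(\tilde Z_k)\;\le\; g(Z_k)-\frac{\hat\alpha_k\,\epsilon_k}{2},
\end{equation*}
which, after rearranging, is exactly \eqref{key convergence relation}.

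Finally, for \eqref{descent tildes} the right inequality $g(\tilde Z_k)\le g(Z_k)$ follows immediately by taking $\alpha=0$ as a feasible competitor in the line-search \eqref{FW stepsize}, so $g(\tilde Z_k)=\min_{\alpha\in[0,1]}g(Z_k-\alpha D_k)\le g(Z_k)$. The left inequality $g(Z_{k+1})\le g(\tilde Z_k)$ is just the descent requirement \eqref{descent condition} imposed on step~1 of RFW at iteration $k+1$ (with the roles $\tilde Z_{(k+1)-1}=\tilde Z_k$). There is no real obstacle here; the only point that requires a small bit of care is the clean two-case split on $\hat\alpha_k$, since the bound $\|D_k\|\le D_\Omega$ is what lets the full-step case $\hat\alpha_k=1$ absorb the quadratic term.
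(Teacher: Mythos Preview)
Your proof is correct and follows essentially the same approach as the paper. The only cosmetic differences are that the paper avoids the two-case split by using directly that $\hat\alpha_k\le \epsilon_k/(L_gD_\Omega^2)$ (which always holds by the $\min$), and it derives $g(\tilde Z_k)\le g(Z_k)$ from \eqref{key convergence relation} together with $\epsilon_k\ge 0$ (Lemma~\ref{Epsilon K Facts Proposition}) rather than your cleaner $\alpha=0$ competitor argument.
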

\begin{proof}
It follows from the definitions of $\epsilon_k$, $\alpha_k$, $\tilde Z_k$, and $\hat \alpha_k$ in \eqref{FW algo definitions}, \eqref{FW stepsize}, \eqref{Tilde X Update}, \eqref{Definition of Hat Alpha}, respectively, the fact that $\|D_k\|^2\leq D^2_{\Omega}$, and from applying inequality \eqref{upper curvature} with $U'=Z_k-\hat \alpha_kD_k$ and $U=Z_{k}$ that
\begin{align*}
  g(\tilde Z_k)&\overset{\eqref{FW stepsize}, \eqref{Tilde X Update}}{\leq} g(Z_k-\hat \alpha_kD_k)\overset{\eqref{upper curvature}}{\leq} g(Z_k)-\hat \alpha_k \inner{\nabla g(Z_k)}{D_k}+\frac{\hat \alpha_k^2L_{g}}{2}D^2_{\Omega}\\
 &\overset{\eqref{FW algo definitions}}{=} g(Z_k)-\hat \alpha_k \epsilon_k+\frac{\hat \alpha_k^2L_{g}}{2}D^2_{\Omega} \overset{\eqref{Definition of Hat Alpha}}{\leq} g(Z_k)-\hat \alpha_{k}\epsilon_{k}+\frac{\hat \alpha_k\epsilon_k}{2}=g(Z_k)-\frac{\hat \alpha_k\epsilon_k}{2}
\end{align*}
which immediately implies relation \eqref{key convergence relation}. 

The first inequality in \eqref{descent tildes} follows immediately from \eqref{descent condition}. The second inequality in \eqref{descent tildes} follows immediately from relations \eqref{key convergence relation} and \eqref{epsilon nonnegative}.
\end{proof}

The next proposition establishes the convergence rate of the RFW method.

\begin{prop}\label{Hybrid FW Convergence Rate}
For every $k \geq 2$, the following relations hold:
\begin{align}\label{Function Value Convergence}
        g(Z_k)-g_{*} &\leq \frac{2}{k-1} \max\left\{g(\tilde Z_0)-g_{*},L_gD^2_{\Omega}\right\},\\
        \min_{k \leq j < 2k} \epsilon_j &\leq  \frac{4}{k-1} \max\left\{g(\tilde Z_0)-g_{*}, \sqrt{\left(g(\tilde Z_0)-g_{*}\right)L_gD^2_{\Omega}},L_gD^2_{\Omega}\right\} \label{epsilon Convergence bound}.
    \end{align}
\end{prop}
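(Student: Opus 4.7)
The plan is to turn the one-step estimate in Lemma~\ref{Important FW Convergence Lemma} into a recursion for the optimality gap $\delta_k := g(Z_k) - g_*$, and then to carry out standard Frank--Wolfe bookkeeping to derive both stated bounds. Writing $C := L_g D_\Omega^2$ and $M := \max\{g(\tilde Z_0) - g_*, C\}$, I would first combine $g(Z_{k+1}) \leq g(\tilde Z_k)$ from \eqref{descent tildes} with the inequality $\hat\alpha_k \epsilon_k / 2 \leq g(Z_k) - g(\tilde Z_k)$ obtained by rearranging \eqref{key convergence relation}, and then use $\epsilon_k \geq \delta_k$ from \eqref{epsilon nonnegative} together with the observation that $\hat\alpha_k \epsilon_k = \min\{\epsilon_k, \epsilon_k^2/C\}$ is a non-decreasing function of $\epsilon_k$. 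This yields the one-step recursion
\[
    \delta_{k+1} \leq \delta_k - \frac{1}{2}\min\left\{\delta_k,\; \frac{\delta_k^2}{C}\right\}.
\]

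Next I would observe that $\delta_1 \leq g(\tilde Z_0) - g_* \leq M$ and that $\{\delta_k\}$ is non-increasing (again by \eqref{descent tildes} and \eqref{epsilon nonnegative}), hence $\delta_k \leq M$ for every $k$. Because $\delta_k \leq M$ and $M \geq C$, both $\delta_k/2$ and $\delta_k^2/(2C)$ dominate $\delta_k^2/(2M)$, so the recursion relaxes uniformly to $\delta_{k+1} \leq \delta_k - \delta_k^2/(2M)$. Setting $u_k := 1/\delta_k$, a one-line manipulation using $1/(1-x) \geq 1+x$ gives $u_{k+1} \geq u_k + 1/(2M)$; summing from $k=1$ and using $u_1 \geq 1/M$ yields $u_k \geq (k+1)/(2M)$, i.e.\ $\delta_k \leq 2M/(k+1) \leq 2M/(k-1)$ for $k \geq 2$. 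This proves relation \eqref{Function Value Convergence}.

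For relation \eqref{epsilon Convergence bound}, I would telescope $g(Z_j) - g(Z_{j+1}) \geq \hat\alpha_j \epsilon_j / 2$ from $j=k$ to $j=2k-1$ to obtain $\sum_{j=k}^{2k-1} \hat\alpha_j \epsilon_j \leq 2\bigl(g(Z_k) - g(Z_{2k})\bigr) \leq 2\delta_k \leq 4M/k$. Setting $\hat\epsilon := \min_{k \leq j < 2k}\epsilon_j$ and using the monotonicity of the map $x \mapsto \min\{x, x^2/C\}$ on $\mathbb R_+$ to lower-bound each summand by the value at $\hat\epsilon$, the sum is at least $k\min\{\hat\epsilon, \hat\epsilon^2/C\}$, so $\min\{\hat\epsilon, \hat\epsilon^2/C\} \leq 4M/k^2$. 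A case split on whether $\hat\epsilon \leq C$ then gives $\hat\epsilon \leq \max\{2\sqrt{MC}/k,\; 4M/k^2\}$; since $\sqrt{MC}$ equals $\sqrt{BC}$ or $C$ according to whether $B := g(\tilde Z_0) - g_*$ exceeds $C$, both terms are dominated by $\frac{4}{k-1}\max\{B,\sqrt{BC},C\}$ for $k \geq 2$, which is exactly \eqref{epsilon Convergence bound}.

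The main technical point requiring care is the monotonicity of $x \mapsto \min\{x, x^2/C\}$: this is what allows the sum-of-$\hat\alpha_j\epsilon_j$ bound to be converted into a pointwise bound on $\hat\epsilon$, since the adaptive step-size factor $\hat\alpha_j$ itself depends on $\epsilon_j$ and no uniform lower bound on individual summands is immediately available. Everything else amounts to bookkeeping: in particular the halving case $\epsilon_k \geq C$ is already subsumed by the single recursion $\delta_{k+1} \leq \delta_k - \delta_k^2/(2M)$ thanks to $\delta_k \leq M$, so no separate case analysis is needed inside the induction.
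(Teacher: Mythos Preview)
Your proof is correct and follows essentially the same approach as the paper: both use the one-step descent estimate from Lemma~\ref{Important FW Convergence Lemma} together with \eqref{epsilon nonnegative} to derive a recursion on the optimality gap, convert it to an increment for the reciprocal $1/\delta_k$, and then telescope the per-step bound $\hat\alpha_j\epsilon_j/2 \le g(Z_j)-g(Z_{j+1})$ over a window of length $\sim k$ to control $\min_j \epsilon_j$. The only organizational differences are that you first relax to the single quadratic recursion $\delta_{k+1}\le \delta_k-\delta_k^2/(2M)$ before passing to reciprocals (the paper bounds $1/\gamma_{j+1}-1/\gamma_j$ directly from the two-case recursion), and that you invoke monotonicity of $x\mapsto\min\{x,x^2/C\}$ where the paper uses the product factorization $\hat\alpha_j\epsilon_j=\min\{1,\epsilon_j/C\}\cdot\epsilon_j$ and bounds each factor separately; both routes yield the same case split on $\hat\epsilon$ versus $C$.
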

\begin{proof}
Define $\tilde \gamma_0:=g(\tilde Z_0)-g_{*}$ and let $\gamma_{j}:=g(Z_{j})-g_{*}$ for any iteration index $j$. It then follows from relations \eqref{epsilon nonnegative} and \eqref{key convergence relation} and relation \eqref{descent tildes} with $k=j$ that the following two relations hold
\begin{align}
&\frac{\hat \alpha_{j}}{2}\gamma_{j}\overset{\eqref{epsilon nonnegative}}{\leq} \frac{\hat \alpha_{j}}{2}\epsilon_{j}\overset{\eqref{key convergence relation}}{\leq} g(Z_j)-g(\tilde Z_{j})\overset{\eqref{descent tildes}}{\leq} g(Z_j)-g(Z_{j+1})=\gamma_{j}-\gamma_{j+1}\label{Recurrence Relation-1}\\
&\gamma_{j+1} \overset{\eqref{descent tildes}}{\leq}\gamma_{j} \overset{\eqref{epsilon nonnegative}}{\leq} \epsilon_{j}.\label{Recurrence Relation-2}
\end{align}
Hence, using relations \eqref{Recurrence Relation-1} and \eqref{Recurrence Relation-2}, relation \eqref{descent condition} with $k=1$, and the expression for $\hat \alpha_{j}$ in \eqref{Definition of Hat Alpha}, it follows that
\begin{align}\label{Telescoping FW}
    \frac{1}{\gamma_{j+1}}-
    \frac{1}{\gamma_{j}}
    &= \frac{\gamma_{j} -\gamma_{j+1} }{\gamma_{j+1}\gamma_{j}} \overset{\eqref{Recurrence Relation-1}}{\ge} \frac{\hat \alpha_j \gamma_j}{2\gamma_{j+1}\gamma_{j}} \overset{\eqref{Definition of Hat Alpha}}{=}\frac{1}{2\gamma_{j+1}} \min \left\{1, \frac{\epsilon_j}{L_gD^2_{\Omega}} \right\} \\
   &\overset{\eqref{Recurrence Relation-2}}{\geq} \frac{1}{2}\min \left\{\frac{1}{{\gamma_{1}}}, \frac{1}  {L_gD^2_{\Omega}} \right\} \overset{\eqref{descent condition}}{\geq} \frac{1}{2}\min \left\{\frac{1}{\tilde \gamma_0}, \frac{1}  {L_gD^2_{\Omega}} \right\}.
\end{align}
It follows from summing the inequality in \eqref{Telescoping FW} from $j=1$ to $k-1$ that
\begin{equation}\label{intermediary function value}
\frac{1}{\gamma_{k}}
\ge \frac{1}{\gamma_{k}}-\frac{1}{\gamma_1}
\ge \frac{k-1}2 \min \left\{ \frac{1}{\tilde \gamma_{0}}, \frac{1} 
{L_gD^2_{\Omega}}
\right\},
\end{equation}
which, together, with the definition of $\gamma_{k}$ implies relation \eqref{Function Value Convergence}.

It follows from summing the relation in \eqref{key convergence relation} from $j=k$ to $j=2k+1$ and relations \eqref{descent tildes} and \eqref{intermediary function value} that 
\begin{align}
\frac{2}{k-1} \max\{\tilde \gamma_0,L_gD^2_{\Omega}\}&\overset{\eqref{intermediary function value}}{\geq} \gamma_k \geq
g(Z_k) - g(Z_{2k+1}) \nonumber\\
&=\sum_{j=k}^{2k} g(Z_j) - g(Z_{j+1})
\overset{\eqref{descent tildes}}{\ge} \sum_{j=k}^{2k} g(Z_j) - g(\tilde{Z}_{j})
\overset{\eqref{key convergence relation}}{\ge} \sum_{j=k}^{2k} \frac{\hat \alpha_j}2 \epsilon_j \label{Final FW inequality}.
\end{align}
It now follows from relation \eqref{Final FW inequality} and the definition of $\hat \alpha_{j}$ in \eqref{Definition of Hat Alpha} that
\begin{align}
\frac{4}{(k-1)^2}\max\left\{\tilde \gamma_{0},L_gD^2_{\Omega}\right\} \overset{\eqref{Final FW inequality}}{\geq} \min_{k \leq j \leq 2k} \hat \alpha_{j}\epsilon_{j} \overset{\eqref{Definition of Hat Alpha}}{\geq} \min_{k \leq j \leq 2k} \left\{1, \frac{\epsilon_j}
{L_gD^2_{\Omega}}\right\} \min_{k \leq j \leq 2k} \epsilon_{j},
\end{align}
which implies relation \eqref{epsilon Convergence bound} in view of the definition of $\tilde \gamma_0$.
\end{proof}
We are now ready to prove Theorem~\ref{FW Theorem}.
\begin{proof}[Proof of Theorem~\ref{FW Theorem}]
The stopping criterion in step 3 of the RFW method and relation \eqref{epsilon inclusion-appendix} immediately imply that output $\bar Z$ satisfies relation \eqref{Approximate Solution Type}.

In view of the stopping criterion in step 3 of the RFW method, the iteration complexity result in \eqref{Hybrid FW Method Exact Complexity} follows immediately from relation \eqref{epsilon Convergence bound}.
\end{proof}

\section{AL method for linearly-constrained convex optimization}
\label{s:ALgeneral}

This section is dedicated to analyzing the convergence of the augmented Lagrangian framework for solving linearly-constrained convex optimization problems.

Let $\mathbb E$ denote an Euclidean space,
$\mathcal{A}:\mathbb E \to \RR^m$ be a linear operator,  
$b\in\RR^m$, $f:\mathbb E \to \RR$ be a differentiable convex function, and $h:\mathbb E \to (-\infty,\infty]$ be a closed proper convex function.
Consider the linearly-constrained convex optimization problem
	\begin{equation} \label{eq:general_probl}
    \min \{ \phi(X) := f(X) + h(X) : \mathcal{A} X  =b\},
	\end{equation}
where the domain of $h$ has finite diameter $D_h$.
The following assumption is also made.

\begin{assumption}\label{ass:attained-general}
There exists $(X_*,p_*)$ such that
\begin{align}
  0\in \nabla f(X_*) + \partial h(X_{*})+\mathcal{A}^*p_*, \quad  \mathcal{A}X_*-b = 0
\end{align}
\end{assumption}
Given a previous dual iterate $p_{t-1}$, the AL framework finds the next primal iterate $X_{t}$ by
\begin{equation}\label{minimization problem}
    X_{t} \approx \argmin_{X} \mathcal L_{\beta}(X;p_{t-1})
\end{equation}
where 
\begin{equation}\label{augmented Lagrangian}
{\cal L}_\beta(X;p)
  := f(X) + h(X)+\left\langle p,\mathcal{A}X-b\right\rangle+\frac{\beta}{2}\|\mathcal{A} X-b\|^2 
\end{equation}
is the augmented Lagrangian function and $\beta>0$ is a fixed penalty parameter.
We assume the existence of a blackbox that inexactly solves such minimization problems as in \eqref{minimization problem}.

\begin{blackbox}{AL}\label{ass:blackbox-general}
Given a pair $(\hat\epsilon_{\mathrm{c}},\hat\epsilon_{\mathrm{d}}) \in \R^2_+$ and convex functions $g: \mathbb{E} \to \RR$ and $h: \mathbb{E} \to \RR$,
the blackbox returns a pair $(\hat X, \hat R)$ satisfying
\begin{equation}\label{residual}
    \hat X \in \dom h,\quad
    \hat R \in
    \nabla g(\hat X) + \partial_{\hat\epsilon_{\mathrm{c}}} h(\hat X),
    \qquad \|\hat R\| \leq \hat\epsilon_{\mathrm{d}}.
\end{equation}
\end{blackbox}

The AL framework is now presented formally below.

\noindent\begin{minipage}[t]{1\columnwidth}%
\rule[0.5ex]{1\columnwidth}{1pt}

\noindent \textbf{AL Framework}

\noindent \rule[0.5ex]{1\columnwidth}{1pt}%
\end{minipage}

\noindent \textbf{Input}: $p_0 \in \RR^{m}$,
tolerances $\epsilon_{\mathrm{p}}$ > 0, $\epsilon_{\mathrm{d}} \geq 0, \epsilon_{\mathrm{c}} \geq 0$,
and penalty parameter $\beta>0$.

\noindent \textbf{Output}: 
triple $(\bar X, \bar p, \bar R)$.

\begin{itemize}
\item[{\bf 0.}]
Set $t=1$ and
\begin{equation}\label{epsilon hat}
\hat \epsilon_{\mathrm{c}} = 
\min\{\epsilon_{\mathrm{c}},\;
{\beta \epsilon_{\mathrm{p}}^2}/{6}\}, \quad \hat \epsilon_{\mathrm{d}} =  
\min\{\epsilon_{\mathrm{d}},\;
{\beta \epsilon_{\mathrm{p}}^2}/(6 D_h)\};
\end{equation} 
\item[{\bf 1.}]
Call the Blackbox~AL with
tolerance pair $(\hat \epsilon_{\mathrm{c}}, \hat\epsilon_{\mathrm{d}})$ and functions $h=h$ and $g(\cdot)=\mathcal L_{\beta}(\cdot;p_{t-1})$
and 
let
$(X_t,R_t)$ be its output;

\item[{\bf 2.}]
Set
\begin{equation}\label{dual update}
 p_t=p_{t-1}+\beta (\mathcal AX_t-b);
\end{equation}
 
\item[{\bf 3.}]
If $\|\mathcal{A}X_t - b\| \leq \epsilon_{\mathrm{p}}$,
then set $T=t$ and \textbf{return} $(X_{T}, p_{T}, R_{T})$;
 
\item[{\bf 4.}]
 Set $t \leftarrow t+1$ and \textbf{go to} step \textbf{1.}
\end{itemize}
\noindent \rule[0.5ex]{1\columnwidth}{1pt}


The following result states the main iteration complexity of the AL framework and establishes the boundedness of its sequence of Lagrange multipliers. The proof of the result is given in the next subsection.

\begin{thm}\label{thm:OuterComplexity}
Under Assumption~\ref{ass:attained-general},
  the following statements about the AL framework hold:
  \begin{itemize}
\item[(a)] the AL framework terminates with
an iterate
$(X_{T}, p_{T}, R_{T}) \in \dom h \times \R^m \times \mathbb E$ such that
    \begin{align}\label{eq:stationary-general}
     R_{T}\in \nabla f(X_{T}) + \partial_{\epsilon_{\mathrm{c}}} h(X_{T})+\mathcal{A}^*p_{T} ,\quad
      \|R_{T}\|\leq  \epsilon_{\mathrm{d}},\quad  \|\mathcal{A}X_{T}-b\|\leq  \epsilon_{\mathrm{p}}
    \end{align}
and
  \begin{equation}\label{Complexity Result}
        T \le 
 \left\lceil\frac{3\|p_{*}-p_0\|^2}{\beta^2\epsilon_{\mathrm{p}}^2}\right\rceil;
    \end{equation}
\item[(b)] 
there hold
\begin{equation}\label{bound on Lagrange multipliers}
   \max_{t\in\{0,\ldots T\}}\|p_{t}\|\leq \|p_{*}\|+\sqrt{3\|p_{*}-p_0\|^2+2\beta(D_h\hat \epsilon_{\mathrm{d}}+\hat \epsilon_{\mathrm{c}})},
\end{equation}
\begin{equation}\label{bound on feasibility}
\beta^2\sum_{l=1}^{T}\|\mathcal{A}X_l-b\|^2\leq 3\|p_{*}-p_0\|^2+2\beta(D_h\hat \epsilon_{\mathrm{d}}+\hat \epsilon_{\mathrm{c}}),
\end{equation}
\end{itemize}
where $p_{*}$ is an optimal Lagrange multiplier and $\hat \epsilon_{\mathrm{c}}$ and $\hat \epsilon_{\mathrm{d}}$ are as in \eqref{epsilon hat}.
\end{thm}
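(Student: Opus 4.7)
The plan is to first observe that the blackbox inclusion \eqref{residual} combined with the dual update \eqref{dual update} yields the approximate KKT condition at each iteration, and then to control $\|p_t - p_*\|^2$ through a standard quasi-Fej\'er argument which, combined with the termination criterion, bounds $T$. For $g(\cdot) = \mathcal{L}_\beta(\cdot;p_{t-1})$ one computes $\nabla g(X) = \nabla f(X) + \mathcal{A}^*p_{t-1} + \beta\mathcal{A}^*(\mathcal{A}X - b)$, so by \eqref{dual update} the blackbox output satisfies
\begin{equation*}
R_t \in \nabla f(X_t) + \mathcal{A}^*p_t + \partial_{\hat\epsilon_{\mathrm{c}}} h(X_t), \qquad \|R_t\| \leq \hat\epsilon_{\mathrm{d}}.
\end{equation*}
Together with $\hat\epsilon_{\mathrm{c}} \leq \epsilon_{\mathrm{c}}$, $\hat\epsilon_{\mathrm{d}} \leq \epsilon_{\mathrm{d}}$ and the termination check $\|\mathcal{A}X_T - b\| \leq \epsilon_{\mathrm{p}}$ in step 3, this immediately yields \eqref{eq:stationary-general}. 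The nontrivial work is therefore only the complexity bound.

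Next I would derive the key recursion. Pick $u_t \in \partial_{\hat\epsilon_{\mathrm{c}}} h(X_t)$ realizing the inclusion above and $u_* \in \partial h(X_*)$ realizing Assumption~\ref{ass:attained-general}; subtracting gives $u_t - u_* = R_t - [\nabla f(X_t)-\nabla f(X_*)] - \mathcal{A}^*(p_t-p_*)$. The $\epsilon$-subgradient inequality \eqref{def:epsSubdiff} applied symmetrically gives $\langle u_t - u_*, X_t - X_*\rangle \geq -\hat\epsilon_{\mathrm{c}}$, while convexity of $f$ gives $\langle\nabla f(X_t) - \nabla f(X_*), X_t - X_*\rangle \geq 0$. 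Using $\mathcal{A}X_* = b$ together with $\mathcal{A}X_t - b = (p_t - p_{t-1})/\beta$ from \eqref{dual update}, these combine into
\begin{equation*}
\langle p_t - p_*, p_t - p_{t-1}\rangle \leq \beta\langle R_t, X_t - X_*\rangle + \beta\hat\epsilon_{\mathrm{c}} \leq \beta D_h \hat\epsilon_{\mathrm{d}} + \beta\hat\epsilon_{\mathrm{c}},
\end{equation*}
where the last inequality uses $\|R_t\| \leq \hat\epsilon_{\mathrm{d}}$ and $\|X_t - X_*\| \leq D_h$. Expanding $2\langle p_t-p_*, p_t-p_{t-1}\rangle = \|p_t-p_*\|^2 - \|p_{t-1}-p_*\|^2 + \|p_t-p_{t-1}\|^2$ produces the quasi-Fej\'er recursion
\begin{equation*}
\|p_t - p_*\|^2 + \|p_t - p_{t-1}\|^2 \leq \|p_{t-1} - p_*\|^2 + 2\beta(D_h\hat\epsilon_{\mathrm{d}} + \hat\epsilon_{\mathrm{c}}).
\end{equation*}

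For part~(a), summing the recursion from $t = 1$ to $T-1$ and using that $\|p_t - p_{t-1}\|^2 = \beta^2\|\mathcal{A}X_t - b\|^2 > \beta^2\epsilon_{\mathrm{p}}^2$ for every $t < T$ (since step~3 did not terminate), together with the choice \eqref{epsilon hat} which gives $2\beta(D_h\hat\epsilon_{\mathrm{d}} + \hat\epsilon_{\mathrm{c}}) \leq 2\beta^2\epsilon_{\mathrm{p}}^2/3$, yields $(T-1)\beta^2\epsilon_{\mathrm{p}}^2/3 \leq \|p_0 - p_*\|^2$, hence \eqref{Complexity Result}. For part~(b), first telescope the recursion to obtain $\|p_t - p_*\|^2 \leq \|p_0 - p_*\|^2 + 2t\beta(D_h\hat\epsilon_{\mathrm{d}} + \hat\epsilon_{\mathrm{c}})$ for all $t \leq T$, then substitute the bound on $T$ together with $(D_h\hat\epsilon_{\mathrm{d}} + \hat\epsilon_{\mathrm{c}})/(\beta\epsilon_{\mathrm{p}}^2) \leq 1/3$ to get $\|p_t - p_*\|^2 \leq 3\|p_0 - p_*\|^2 + 2\beta(D_h\hat\epsilon_{\mathrm{d}} + \hat\epsilon_{\mathrm{c}})$; \eqref{bound on Lagrange multipliers} then follows by the triangle inequality $\|p_t\| \leq \|p_*\| + \|p_t - p_*\|$, and \eqref{bound on feasibility} follows from the same telescoped identity applied at $t = T$ after identifying $\beta^2\|\mathcal{A}X_l - b\|^2 = \|p_l - p_{l-1}\|^2$. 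The main obstacle is assembling the quasi-Fej\'er recursion so that the only surviving error term is the clean $\beta(D_h\hat\epsilon_{\mathrm{d}} + \hat\epsilon_{\mathrm{c}})$; once that single inequality is in place the remaining arguments are routine telescoping and a use of the termination rule.
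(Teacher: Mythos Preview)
Your proof is correct and follows essentially the same approach as the paper: the paper derives the same quasi-Fej\'er inequality (packaged there as Lemma~\ref{lem:ALtelescopic}) via the same subdifferential/monotonicity argument, then telescopes and uses the termination rule together with the choice \eqref{epsilon hat} exactly as you do. The only cosmetic differences are that the paper writes the key inequality as $\inner{\mathcal{A}X_t-b}{p_*-p_t}\geq \inner{R_t}{X_*-X_t}-\hat\epsilon_{\mathrm{c}}$ and phrases the iteration bound via a short contradiction, whereas you bound $T$ directly; the content is identical.
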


\subsection{Proof of Theorem~\ref{thm:OuterComplexity}}
This subsection is dedicated to proving Theorem~\ref{thm:OuterComplexity}.
The proof relies on the following two preliminary lemmas.

\begin{lem}
For any $t \geq 1$, the following relation holds
\begin{equation}\label{inner product bound}
  \inner{\mathcal{A}X_t-b}{p_{*}-p_t}\geq \inner{R_t}{X_{*}-X_t}-\hat\epsilon_{\mathrm{c}},
\end{equation}
where $(X_{*},p_{*})$ is an optimal primal-dual pair of \eqref{eq:general_probl}. 
\end{lem}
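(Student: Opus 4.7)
The plan is to combine the optimality conditions for $(X_*,p_*)$ at the problem \eqref{eq:general_probl} with the approximate optimality that the Blackbox AL produces at iteration $t$, applied to the augmented Lagrangian viewed as a smooth perturbation of $f$.

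First I would unpack the output of the Blackbox AL call. By step 1 of the AL framework, $(X_t,R_t)$ satisfies $R_t\in\nabla g(X_t)+\partial_{\hat\epsilon_{\mathrm c}}h(X_t)$ with $g(\cdot)=\mathcal L_\beta(\cdot;p_{t-1})$. Differentiating \eqref{augmented Lagrangian} and using the dual update \eqref{dual update}, one gets $\nabla g(X_t)=\nabla f(X_t)+\mathcal A^*p_{t-1}+\beta\mathcal A^*(\mathcal A X_t-b)=\nabla f(X_t)+\mathcal A^*p_t$. Hence $R_t-\nabla f(X_t)-\mathcal A^*p_t\in\partial_{\hat\epsilon_{\mathrm c}}h(X_t)$, so by the definition \eqref{def:epsSubdiff} applied at $X_*$,
\begin{equation*}
h(X_*)\ge h(X_t)+\langle R_t-\nabla f(X_t)-\mathcal A^*p_t,\,X_*-X_t\rangle-\hat\epsilon_{\mathrm c}.
\end{equation*}

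Next, convexity of $f$ gives $f(X_*)\ge f(X_t)+\langle\nabla f(X_t),X_*-X_t\rangle$. Adding these two bounds yields
\begin{equation*}
\phi(X_*)\ge\phi(X_t)+\langle R_t-\mathcal A^*p_t,\,X_*-X_t\rangle-\hat\epsilon_{\mathrm c}.
\end{equation*}
On the other hand, the KKT conditions in Assumption~\ref{ass:attained-general} furnish some $s_*\in\partial h(X_*)$ with $\nabla f(X_*)+s_*+\mathcal A^*p_*=0$. Convexity of $f$ and $h$ at the point $X_t$, combined with this relation and $\mathcal A X_*=b$, gives $\phi(X_t)\ge\phi(X_*)-\langle p_*,\mathcal A X_t-b\rangle$.

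Combining the two displays cancels $\phi(X_*)$ and leaves
\begin{equation*}
\langle p_*,\mathcal A X_t-b\rangle\ge\langle R_t,X_*-X_t\rangle-\langle\mathcal A^*p_t,X_*-X_t\rangle-\hat\epsilon_{\mathrm c}.
\end{equation*}
Finally, $\langle\mathcal A^*p_t,X_*-X_t\rangle=\langle p_t,\mathcal A X_*-\mathcal A X_t\rangle=-\langle p_t,\mathcal A X_t-b\rangle$, and substituting yields precisely \eqref{inner product bound}. No part of this is technically hard; the only subtlety is correctly identifying $\nabla g(X_t)=\nabla f(X_t)+\mathcal A^*p_t$ (with $p_t$, not $p_{t-1}$), which is what makes the dual multiplier $p_t$ appear on the left-hand side of \eqref{inner product bound}.
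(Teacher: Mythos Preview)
Your proof is correct and follows essentially the same approach as the paper: both combine the subdifferential inequality coming from the Blackbox output $R_t\in\nabla f(X_t)+\partial_{\hat\epsilon_{\mathrm c}}h(X_t)+\mathcal A^*p_t$ with the optimality inclusion $0\in\partial\phi(X_*)+\mathcal A^*p_*$, then use $\mathcal A X_*=b$ to rewrite the inner product. The only cosmetic difference is that the paper bundles $f+h=\phi$ via $\nabla f+\partial_{\hat\epsilon_{\mathrm c}}h\subset\partial_{\hat\epsilon_{\mathrm c}}\phi$, whereas you treat $f$ and $h$ separately before adding; the key identification $\nabla g(X_t)=\nabla f(X_t)+\mathcal A^*p_t$ that you highlight is exactly what drives both arguments.
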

\begin{proof}
Since $(X_{*},p_{*})$ is an optimal primal-dual pair, it follows that 
\begin{equation}\label{optimality inclusion}
  0 \in \partial \phi(X_{*})+\mathcal{A}^{*}p_{*}, \quad \mathcal{A}X_{*}=b,
\end{equation}
where $\phi$ is as in \eqref{eq:general_probl}.
Relation \eqref{residual} implies that 
\begin{equation}\label{inclusion R}
R_t \in 
\nabla f(X_t) + \partial_{\hat\epsilon_{\mathrm{c}}}h(X_t)+\mathcal{A}^{*}p_t \subset
\partial_{\hat\epsilon_{\mathrm{c}}}\phi(X_t)+\mathcal{A}^{*}p_t.
\end{equation}
It follows from relations \eqref{optimality inclusion} and \eqref{inclusion R} and the definition of $\hat\epsilon_{\mathrm{c}}$-subdifferential that
\begin{align*}
  \phi(X_t) - \phi(X_*) &\overset{\eqref{optimality inclusion}}{\geq} \inner{-\mathcal{A}^*p_*}{X_t - X_*}\\
\phi(X_*) - \phi(X_t) &\overset{\eqref{inclusion R}}{\geq} \inner{R_t -\mathcal{A}^*p_t}{X_* - X_t} - \hat\epsilon_{\mathrm{c}}.
\end{align*}
Adding the two above relations implies that
\begin{equation}\label{monotonicity}
  \inner{\mathcal{A}^{*}(p_{*}-p_t)}{X_t-X_{*}}\geq \inner{R_t}{X_{*}-X_t}-\hat\epsilon_{\mathrm{c}}.
\end{equation}
Relations \eqref{optimality inclusion}
and \eqref{monotonicity} then imply that
\[\inner{\mathcal{A}X_t-b}{p_{*}-p_t}\overset{\eqref{optimality inclusion}}{=}\inner{\mathcal{A}(X_t-X_{*})}{p_{*}-p_t}=\inner{X_t-X_{*}}{\mathcal{A}^{*}(p_{*}-p_t)}\overset{\eqref{monotonicity}}{\geq} \inner{R_t}{X_{*}-X_t}-\hat\epsilon_{\mathrm{c}},\]
from which the result immediately follows.
\end{proof}

\begin{lem}\label{lem:ALtelescopic}
For any iteration index $t$ of the AL framework, there holds:
\begin{equation}
\label{final inequality}
  \beta^2\sum_{l=1}^{t}\|\mathcal{A}X_l-b\|^2\leq \|p_{*}-p_0\|^2-\|p_{*}-p_t\|^2+2\beta t (D_h \hat\epsilon_{\mathrm{d}} + \hat\epsilon_{\mathrm{c}}).
\end{equation}
\end{lem}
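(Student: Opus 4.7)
The plan is to derive a per-iteration recursion relating $\|p_*-p_t\|^2$ to $\|p_*-p_{t-1}\|^2$, and then telescope. The key algebraic identity is the dual update \eqref{dual update}, which gives $p_t - p_{t-1} = \beta(\mathcal{A}X_t - b)$ and hence $\|p_t - p_{t-1}\|^2 = \beta^2\|\mathcal{A}X_t - b\|^2$. This is the quantity we want to isolate.

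First I would apply the ``three-point identity'' (expanding $\|p_* - p_t\|^2 = \|(p_* - p_{t-1}) - (p_t - p_{t-1})\|^2$) and substitute the dual update to obtain, after rearranging,
\begin{equation*}
\beta^2 \|\mathcal{A}X_t-b\|^2 \;=\; \|p_*-p_{t-1}\|^2 - \|p_*-p_t\|^2 \;+\; 2\beta\,\inner{p_t-p_*}{\mathcal{A}X_t-b}.
\end{equation*}
This leaves the cross term $\inner{p_t-p_*}{\mathcal{A}X_t-b}$ to be controlled.

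Second, I would bound this cross term using the previously established inequality \eqref{inner product bound}, which gives $\inner{\mathcal{A}X_t-b}{p_*-p_t} \geq \inner{R_t}{X_*-X_t} - \hat\epsilon_{\mathrm{c}}$. Applying Cauchy--Schwarz together with $\|R_t\| \leq \hat\epsilon_{\mathrm{d}}$ (from \eqref{residual}) and the diameter bound $\|X_*-X_t\| \leq D_h$ (since both lie in $\dom h$), we get
\begin{equation*}
\inner{p_t-p_*}{\mathcal{A}X_t-b} \;\leq\; D_h \hat\epsilon_{\mathrm{d}} + \hat\epsilon_{\mathrm{c}}.
\end{equation*}
Substituting yields the one-step inequality
\begin{equation*}
\beta^2 \|\mathcal{A}X_t-b\|^2 \;\leq\; \|p_*-p_{t-1}\|^2 - \|p_*-p_t\|^2 + 2\beta(D_h\hat\epsilon_{\mathrm{d}}+\hat\epsilon_{\mathrm{c}}).
\end{equation*}

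Finally, summing this inequality over $l = 1, \ldots, t$ telescopes the $\|p_* - p_l\|^2$ terms and gives \eqref{final inequality}. No step here appears to be a significant obstacle; the main care required is simply tracking signs in the expansion of $\|p_*-p_t\|^2$ and then correctly invoking \eqref{inner product bound} with the right direction of inequality. The role of the tolerances $\hat\epsilon_{\mathrm{c}}$ and $\hat\epsilon_{\mathrm{d}}$ (as opposed to the final tolerances $\epsilon_{\mathrm{c}}, \epsilon_{\mathrm{d}}, \epsilon_{\mathrm{p}}$) is essential, since the error in the per-iteration subproblem is exactly what produces the $2\beta t(D_h\hat\epsilon_{\mathrm{d}}+\hat\epsilon_{\mathrm{c}})$ slack on the right-hand side.
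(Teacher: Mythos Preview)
Your proposal is correct and follows essentially the same approach as the paper: the paper also expands $\|p_*-p_{l-1}\|^2-\|p_*-p_l\|^2$ using the dual update \eqref{dual update}, bounds the resulting cross term $2\beta\inner{\mathcal{A}X_l-b}{p_*-p_l}$ via \eqref{inner product bound} together with Cauchy--Schwarz and the diameter bound, and then sums from $l=1$ to $t$ to telescope. The only cosmetic difference is that the paper writes the identity as $\|p_*-p_{l-1}\|^2-\|p_*-p_l\|^2=\|p_{l-1}-p_l\|^2+2\inner{p_l-p_{l-1}}{p_*-p_l}$ rather than isolating $\beta^2\|\mathcal{A}X_l-b\|^2$ first, but the content is identical.
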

\begin{proof}
Let $t$ be an iteration index of the AL framework and suppose $l\leq t$. By completing the square and using relation \eqref{dual update}, it follows that
\begin{align}\label{telescoping dual}
  \|p_*-p_{l-1}\|^2-\|p_*-p_l\|^2
  &=\|p_{l-1}-p_l\|^2 + 2 \inner{p_l-p_{l-1}}{p_*-p_l} \nonumber\\
& \overset{\eqref{dual update}}{=} \beta^2 \|\mathcal{A}X_l-b\|^2 + 
2 \beta \inner{\mathcal{A}X_l-b}{p_*-p_l}.
\end{align}
Moreover, relation \eqref{inner product bound}, the definition of $D_h$, the Cauchy-Schwarz inequality, and the fact that the Blackbox is called in step 1 with tolerance $\hat \epsilon_{d}$, imply that
    \begin{align}\label{final inequality needed}
      2\beta \inner{\mathcal{A}X_l-b}{p_{*}-p_l}
      \overset{\eqref{inner product bound}} \geq
      2\beta \inner{R_l}{X_{*}-X_l}-2\beta \hat\epsilon_{\mathrm{c}}
      \geq  -2\beta D_h \hat\epsilon_{\mathrm{d}} -2\beta \hat\epsilon_{\mathrm{c}}.
    \end{align}
Combining relations \eqref{telescoping dual} and \eqref{final inequality needed}, we then conclude  that
    \begin{align}\label{dual recursive}
      \|p_{*}-p_{l-1}\|^2-\|p_{*}-p_l\|^2
      \geq \beta^2\|\mathcal{A}X_l-b\|^2-2\beta D_h\hat\epsilon_{\mathrm{d}} -2\beta \hat\epsilon_{\mathrm{c}}.
    \end{align}
The conclusion of the lemma now follows by summing relation \eqref{dual recursive} from $l=1$ to~$t$.
\end{proof}
We are now ready to prove Theorem~\ref{thm:OuterComplexity}
\begin{proof}[Proof of Theorem~\ref{thm:OuterComplexity}]
(a) Let $t$ be an iteration index of the AL framework. The fact that the Blackbox AL is called in step 1 with inputs $g$ and $(\hat \epsilon_{\mathrm{c}}, \hat \epsilon_{\mathrm{d}})$ implies that its output $(X_t,R_t)$ satisfies that $\|R_t\|\leq \hat\epsilon_{\mathrm{d}}$ and also
  \begin{align*}
      R_t \in \nabla g(X_t) + \partial_{\hat\epsilon_{\mathrm{c}}} h(X_t)
      &= 
      \nabla f(X_t) +
      \mathcal{A}^*(p_{t-1} + \beta (\mathcal{A}X_t - b))
      + \partial_{\hat\epsilon_{\mathrm{c}}} h(X_t)
      \\
      &= \nabla f(X_t)
      + \partial_{\hat\epsilon_{\mathrm{c}}} h(X_t)
      + \mathcal{A}^*p_t.
  \end{align*}
  Since $\hat\epsilon_{\mathrm{c}} \leq \epsilon_{\mathrm{c}}$ and $\hat\epsilon_{\mathrm{d}} \leq \epsilon_{\mathrm{d}}$, it follows that
  \begin{equation*}
      R_t \in 
      \nabla f(X_t)
      + \partial_{\epsilon_{\mathrm{c}}} h(X_t) + \mathcal{A}^*(p_t),
      \qquad \|R_t\| \leq \epsilon_{\mathrm{d}}.
  \end{equation*}
Since the above relations hold for any iteration index $t$, the output $(X_{T}, p_{T}, R_{T})$ of the AL framework satisfies the first two relations in \eqref{eq:stationary-general}.
It remains to show that the AL framework terminates and that its last iteration index $T$ satisfies \eqref{Complexity Result}. Suppose by contradiction that the AL framework generates an iteration index $\hat t$ satisfying
\begin{equation}\label{AL contradiction}
\hat t> \left\lceil\frac{3\|p_{*}-p_0\|^2}{\beta^2\epsilon_{\mathrm{p}}^2}\right\rceil.
\end{equation}
In view of the stopping criterion of step 3 of the AL framework, this implies that $\|\mathcal AX_t-b\|>\epsilon_{\mathrm{p}}$ for every $t=1,\ldots \hat t-1$. Using this conclusion, relation \eqref{final inequality} with $t=\hat t-1$, and the definitions of $\hat \epsilon_{\mathrm c}$ and $\hat \epsilon_{\mathrm d}$ in \eqref{epsilon hat}, it follows that
\begin{align}\label{Al contradiction 2}
(\hat t-1)\epsilon^2_p&<\sum_{l=1}^{\hat t-1}\|\mathcal{A}X_l-b\|^2\overset{\eqref{final inequality}}{\leq} \frac{\|p_{*}-p_0\|^2}{\beta^2}+(\hat t-1) \frac{2\beta(D_h \hat\epsilon_{\mathrm{d}} + \hat\epsilon_{\mathrm{c}})}{\beta^2} \nonumber\\
&\overset{\eqref{epsilon hat}}{\leq} \frac{\|p_{*}-p_0\|^2}{\beta^2}+(\hat t-1)\frac{2\epsilon^2_p}{3}
\end{align}
which clearly contradicts the bound on $\hat t$ in \eqref{AL contradiction}.
Hence, in view of this conclusion and the termination criterion in step 3, the AL framework must terminate with final iteration index $T$ satisfying \eqref{Complexity Result} and output $(X_T,p_T,R_T)$ satisfying the third relation in \eqref{eq:stationary-general}.

(b) Let $t \leq T$ where $T$ is the final iteration index of the AL framework. It then follows from taking square root of relation \eqref{final inequality} and triangle inequality that
\begin{equation}\label{pt first bound}
\|p_t\|\overset{\eqref{final inequality}}{\leq} \|p_{*}\|+ \sqrt{\|p_{*}-p_0\|^2+2\beta t (D_h \hat\epsilon_{\mathrm{d}} + \hat\epsilon_{\mathrm{c}})}\leq \|p_{*}\|+\sqrt{\|p_{*}-p_0\|^2+2\beta T(D_h \hat\epsilon_{\mathrm{d}} + \hat\epsilon_{\mathrm{c}})}.
\end{equation}
The fact that $T$ satisfies relation \eqref{Complexity Result} and the definitions of $\hat \epsilon_{\mathrm{c}}$ and $\hat \epsilon_{\mathrm{d}}$ in \eqref{epsilon hat} then imply that
\begin{equation}\label{pt second bound}
2\beta T (D_h \hat\epsilon_{\mathrm{d}} + \hat\epsilon_{\mathrm{c}}) \overset{\eqref{Complexity Result}}{\leq} \frac{6(D_h\hat \epsilon_{\mathrm{d}}+\hat \epsilon_{\mathrm{c}})}{\beta \epsilon^2_{p}}\|p_{*}-p_0\|^2 +2\beta(D_h\hat \epsilon_{\mathrm{d}}+\hat \epsilon_{\mathrm{c}}) \overset{\eqref{epsilon hat}}{\leq} 2\|p_{*}-p_0\|^2+2\beta(D_h\hat \epsilon_{\mathrm{d}}+\hat \epsilon_{\mathrm{c}}).
\end{equation}
Relation \eqref{bound on Lagrange multipliers} then immediately follows from combining relations \eqref{pt first bound} and \eqref{pt second bound}.

It follows from relation \eqref{final inequality} with $t=T$ that
\begin{equation}\label{feasibility bound-1}
\beta^2\sum_{l=1}^{T}\|\mathcal{A}X_l-b\|^2 \overset{\eqref{final inequality}}{\leq}\|p_{*}-p_{0}\|^2+ 2\beta T(D_h \hat\epsilon_{\mathrm{d}} + \hat\epsilon_{\mathrm{c}}). 
\end{equation}
Combining relations \eqref{pt second bound} and \eqref{feasibility bound-1} then immediately implies inequality \eqref{bound on feasibility}.
\end{proof}

{\small 
\bibliographystyle{plain}
\bibliography{Proxacc_ref,refs}

\end{document}

\bibliographystyle{plain}
\bibliography{Proxacc_ref,refs}

\end{document}